\renewcommand{\epsilon}{\varepsilon}
\newcommand{\newsection}[1]
{\subsection{#1}\setcounter{theorem}{0} \setcounter{equation}{0}
\par\noindent}
\newtheorem{theorem}{Theorem}
\newtheorem{lemma}[theorem]{Lemma}
\newtheorem{corr}[theorem]{Corollary}
\newtheorem{prop}[theorem]{Proposition}
\newtheorem{deff}[theorem]{Definition}
\newcommand{\bth}{\begin{theorem}}
\newcommand{\ble}{\begin{lemma}}
\newcommand{\bcor}{\begin{corr}}
\newcommand{\bdeff}{\begin{deff}}
\newcommand{\bprop}{\begin{proposition}}
\newcommand{\ele}{\end{lemma}}
\newcommand{\ecor}{\end{corr}}
\newcommand{\edeff}{\end{deff}}
\newcommand{\eprop}{\end{proposition}}
\newcommand{\norm}[2]{\left\| #1 \right\|_{#2}}
\renewcommand{\Pi}{\varPi}
\renewcommand{\epsilon}{\varepsilon}
\newcommand{\K}{{\mathcal K}}
\newcommand{\R}{{\mathbb R}}
\newcommand{\la}{{\langle}}
\newcommand{\ra}{{\rangle}}
\newcommand{\cd}{{\,\cdot\,}}
\newcommand{\bdy}{{\partial\K}}
\newcommand{\ext}{{\R^4\backslash\K}}
\newcommand{\ssum}[2]{\sum_{\substack{ #1 \\ #2}}}
\renewcommand{\S}{{\mathbb{S}}}
\begin{document}

\title[Almost global existence of quasilinear wave equations]
{
Almost global existence for 4-dimensional quasilinear wave equations in exterior domains
}

\author{John Helms}
\address{Department of Mathematics, University of California, 
  Santa Barbara, CA 93106-3080}
\email{johnhelms@math.ucsb.edu}

\author{Jason Metcalfe}
\address{Department of Mathematics, University of North Carolina,
  Chapel Hill, NC  27599-3250}
\email{metcalfe@email.unc.edu}

\thanks{The second author was supported in part by NSF grant DMS-1054289.}

\begin{abstract}
This article focuses on almost global existence for quasilinear wave equations
with small initial data in 4-dimensional exterior domains. The nonlinearity is
allowed to depend on the solution at the quadratic level as well as its
first and second derivatives.  For this problem in the boundaryless
setting, H\"{o}rmander proved that the lifespan
is bounded below by $\exp(c/\epsilon)$ where $\epsilon>0$ denotes the
size of the Cauchy data.  Later Du, the second author,
Sogge, and Zhou showed that this inequality also holds for 
star-shaped obstacles. Following up on the authors' work in the
3-dimensional case, we weaken the hypothesis on the geometry and only require
that the obstacle allow for a sufficiently rapid decay of local energy
for the linear homogeneous wave equation.  The key innovation of this paper
is the use of the boundary term estimates of the second author and Sogge
in conjunction with a variant of an estimate of Klainerman and
Sideris, which will be obtained via a Sobolev inequality of Du and Zhou.
\end{abstract}

\maketitle

\newsection{Introduction}
In this article, we establish a lower bound of $\exp(c / \epsilon)$ on 
the lifespan of small-data solutions to quasilinear wave equations in 
4-dimensional exterior domains with Dirichlet boundary conditions.  Here $\epsilon$ 
denotes the size of the Cauchy data in a suitably chosen Sobolev norm.
 As the lifespan grows exponentially as the size of the initial data
 shrinks, the solution is said to exist almost globally.  The nonlinearities that
we are considering may depend on the solution $u$ in addition to its first and second
derivatives at all levels. 
The lifespan bound established in this article
was first proved in \cite{Hormander} for boundaryless wave equations. A 
relatively recent paper \cite{DMSZ} established 
the same lifespan bound for the exterior of star-shaped domains. This article relaxes the geometric assumptions to
allow for domains in which there is a sufficiently rapid decay of local energy with a possible loss
in $L^2$ regularity.

We now introduce the problem at hand.  Let $\K\subset \R^4$
be a bounded domain with smooth boundary.  Note that we shall not
assume that $\K$ is connected.  We then examine the following
quasilinear wave equation exterior to $\K$
\begin{equation}
  \label{main}
  \begin{cases}
    \Box u(t,x)=Q(u,u',u''),\quad (t,x)\in \R \times\ext,\\
    u(t,\cd)|_{\bdy}=0,\\
    u(0,\cd)=f,\quad \partial_t u(0,\cd)=g.
  \end{cases}
\end{equation}
Here $\Box=\partial_t^2-\Delta$ is the d'Alembertian, and
$u'=\partial u = (\partial_t u, \nabla_x u)$ denotes the space-time gradient.  
Throughout this paper, $u$ will refer to the solution to \eqref{main} with initial data $f,g$.
The nonlinear term $Q$ vanishes to second-order at the origin 
and is linear in $u''$.  Due to the fact that the wave equation is invariant under 
scaling and translations, we shall take $0 \in \K \subset \{|x|<1\}$,
without a loss of generality, throughout the paper.  While we shall state the lifespan bound for 
the scalar equation \eqref{main}, the methods we shall use can be easily adapted 
to prove almost global existence for systems of wave equations, even with multiple wave speeds.

The nonlinearity $Q$ can be expanded as
\[
Q(u,u',u'')=A(u,u') +
B^{\alpha\beta}(u,u')\partial_\alpha\partial_\beta u,
\]
where $A(u,u')$ vanishes to second order at the origin and
$B^{\alpha\beta}$ are functions which are symmetric in $\alpha,\beta$
and vanish to first
order at $(0,0)$.  Here we are using the summation convention where
repeated indices are implicitly summed from $0$ to $4$, $x_0=t$,
$\partial_0=\partial_t$, and $\partial_\alpha = \partial_{x_\alpha}$ 
for $1 \leq \alpha \leq 4$. We will also frequently use multi-index notation,
setting e.g. $\partial^\mu = \partial^{\mu_0}_0 \partial^{\mu_1}_1 
\cdots \partial^{\mu_4}_4$ where $\mu = (\mu_0, \ldots , \mu_4)$.

Since we are working with small Cauchy data, the arguments used to control the quadratic terms can be easily adapted to handle the higher order terms in the Taylor expansion of $Q$ about $(0,0,0)$. 
Thus, we shall truncate $Q$ at the quadratic level. We may write
\[Q(u,u',u'') = A(u,u') + b^{\alpha\beta}
u \partial_\alpha\partial_\beta u +
b^{\alpha\beta}_{\gamma} \partial_\gamma
u \partial_\alpha\partial_\beta u,\]
where $b^{\alpha\beta}$ and $b^{\alpha\beta}_\gamma$ are real constants
which are symmetric in $\alpha, \beta$ and $A(u,u')$ is a quadratic form.

Solving \eqref{main} requires the Cauchy data to satisfy compatibility
conditions.  Formally, for a solution $u\in H^m$, we write
$\partial_t^k
u(0,\cd)=\psi_k(J_kf, J_{k-1}g)$, $0\le k\le m$
where
$J_ku=\{\partial_x^\mu u\,:\,0\le |\mu|\le k\}$.
For $(f,g)\in H^m\times H^{m-1}$, the compatibility condition
requires that the compatibility functions, $\psi_k$, vanish on $\bdy$
for all $0\le k\le m-1$.  For smooth data, we require the
compatibility conditions to hold for all $m$.  See \cite{KSS2} for a
more detailed description of the compatibility conditions.


Our only geometric assumption on $\K$ requires the local energy
for solutions to the linear homogeneous wave equation with compactly
supported data to decay
at a sufficiently rapid, fixed, algebraic rate.  We allow for a loss of $D$ derivatives.  Our proof permits
a loss of any fixed number of derivatives, though we do require that
the local energy decay at a rate faster than $t^{-2}$.
More specifically, we assume that
there are fixed constants $\sigma > 0$ and 
$D \geq 0$ such that if $\Box u = 0$ and if the Cauchy
data $u(0,\cdot), \partial_t u(0,\cdot)$ are supported on the set $\{ |x| < 10 \}$, then the following inequality holds
\begin{equation} \label{local energy}
 \norm{u'(t,\cd)}{L^2( \{ x \in \ext : |x| < 10 \})} \lesssim \left< t
 \right>^{-2-\sigma} 
\sum_{|\mu| \leq D} \norm{\partial^\mu u'(0,\cd)}{2} .
\end{equation}
Here $\left< t \right> = (1 + t^2)^{1/2}$. The notation $A\lesssim B$ indicates that there is a
positive unspecified constant $C$, which may change from line to line,
so that $A\le CB$.  Moreover, this $C$ will implicitly be independent
of any important parameters in our problem.

Local energy decay estimates such as \eqref{local energy} have an extensive history. 
We shall only briefly describe some past results that directly relate to the problem at hand. 
For nontrapping obstacles, it was initially shown in $n=3$ that no
loss ($D=0$) in the right 
hand side is necessary in order to obtain exponential decay.  See,
e.g., \cite{MRS}.  Moreover, \cite{Ralston} showed that a loss,
e.g. $D\neq 0$, must
occur in \eqref{local energy} when there is trapping.  In the other
direction, \cite{Ikawa1, Ikawa2} first established examples in 3
spatial dimensions of
geometries with trapping for which \eqref{local energy} holds, and
there have been many subsequent works in this direction in odd dimensions.
In even dimensions $n$, it was shown in 
\cite{Ral} that the local energy decays at a rate of 
$O(t^{-(n-1)})$ when there is no trapping ($D=0$). See also \cite{Mel} and 
\cite{Strauss}.  Thus, the assumption \eqref{local energy} is a weaker assumption than was
made in \cite{DMSZ}.





We may now state our main theorem, which shows that for Cauchy data of
size $\varepsilon$ solutions to \eqref{main} must exist up to $T_\varepsilon = \exp ( c / \epsilon)$ for some small constant
$c$.
\begin{theorem}\label{thm1}
 Let $\K$ be a smooth, bounded domain for which \eqref{local energy} holds, and let $Q$ be as
 above.  Suppose that the Cauchy
 data $f,g\in C^\infty(\ext)$ are compactly supported and satisfy the compatibility conditions to
 infinite order.  Then there exist constants $N$ and $c$ so that if
 $\varepsilon$ is sufficiently small and
  \begin{equation}\label{data}
   \sum_{|\mu|\le N} \|\partial_x^\mu f\|_{2} + \sum_{|\mu|\le
     N-1}\|\partial_x^\mu g\|_2 \le \varepsilon,
  \end{equation}
then \eqref{main} has a unique solution $u\in
C^\infty([0,T_\varepsilon]\times \ext)$ where
\begin{equation}
  \label{lifespan}
T_\varepsilon =  \exp ( c / \epsilon ).
\end{equation}
\end{theorem}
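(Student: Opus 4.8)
The plan is to run a continuity/bootstrap argument on a carefully chosen energy functional built from the invariant vector fields that are compatible with the obstacle. Since the scaling vector field $S = t\partial_t + r\partial_r$ and the Lorentz boosts do not preserve the Dirichlet condition on $\bdy$, we are restricted to the generators of translations $\partial_\alpha$ and spatial rotations $\Omega_{ij} = x_i\partial_j - x_j\partial_i$; write $Z$ for this collection and $\Gamma = \{Z, S\}$, using $S$ only away from the obstacle where cutoffs localize it. First I would fix the total number of derivatives $N$ (large compared to $D$ and $1/\sigma$), set up the quantity
\[
E_N(t) = \sum_{|\mu|\le N}\norm{(\partial Z^\mu u)(t,\cd)}{\ltoo}^2,
\]
and posit the bootstrap assumption $E_N(t)^{1/2}\le A\epsilon$ for $t\le T_\epsilon$, with $A$ a large constant to be fixed at the end. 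The goal is to show that on $[0,T_\epsilon]$ this self-improves to $E_N(t)^{1/2}\le (A/2)\epsilon$, which, combined with a local existence theory respecting the compatibility conditions, closes the argument.

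The mechanism has three ingredients that must be assembled in order. (1) \emph{Energy inequality with boundary term.} Standard energy estimates for $\Box u = F$ on $\ext$ with Dirichlet data produce, besides the usual $\int\norm{F}{2}$ term, a boundary integral over $\R\times\bdy$ of the normal derivative; I would control it using the boundary term estimates of Metcalfe and Sogge, which trade the boundary integral for a local-in-space spacetime norm of $\partial u$ plus $\partial Zu$. (2) \emph{Local energy decay.} Feeding the geometric hypothesis \eqref{local energy} (after commuting with $Z^\mu$, which is legitimate since $Z$ commutes with $\Box$ and preserves the boundary condition) and cutting off, the local-in-space norm of $u'$ decays like $\la t\ra^{-2-\sigma}$ against $D$ extra derivatives of data; this is what converts slow global growth into an integrable-in-time feedback near the obstacle. (3) \emph{Klainerman--Sideris type estimate via Du--Zhou.} Away from the obstacle one needs pointwise decay of the solution and its derivatives. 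Since boosts are unavailable, I would substitute the Klainerman--Sideris estimate bounding $\la t-r\ra \norm{\partial^2 u}{2}$-type quantities by $\norm{\Gamma u}{2}$ plus $t\norm{\Box u}{2}$, and — crucially, as the abstract advertises — replace the usual Sobolev/Klainerman--Sobolev step by the Du--Zhou Sobolev inequality adapted to $\R^4$, extracting the $\la t\ra^{-3/2}\la t-r\ra^{-1/2}$ (or the relevant 4-D analogue) pointwise bounds from $L^2$ norms of $Z$-derivatives without incurring the forbidden vector fields.

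With these in hand the estimation of the nonlinearity is routine in the exterior of a fixed neighborhood of $\K$: the quadratic null-free structure is harmless because in $n=4$ the decay rates $\la t\ra^{-3/2}$ for $u'$ and $\la t\ra^{-1/2}$ for $u$ give $L^1_t$-integrable quadratic interactions \emph{up to a logarithmic loss}, and it is exactly this single log, $\int_0^{T_\epsilon}\frac{dt}{1+t}\sim \log T_\epsilon$, that caps the argument at $T_\epsilon = \exp(c/\epsilon)$ rather than global existence. Near the obstacle, the elliptic regularity estimate on $\ext$ lets one bound full derivatives by $S$-free good derivatives plus $\Box u$, and the local energy decay term from ingredient (2) closes the local-in-space part. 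Running Gronwall on $E_N$ with the resulting $C\epsilon\,\frac{d}{dt}\log(1+t)$-type coefficient yields $E_N(t)\lesssim \epsilon^2 (1+t)^{C\epsilon}$, and choosing $c$ small forces $E_N^{1/2}\le (A/2)\epsilon$ on $[0,T_\epsilon]$, completing the bootstrap.

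The main obstacle I anticipate is the interface between the two regimes: the Klainerman--Sideris/Du--Zhou pointwise bounds degrade as $r\to 0$ (the weight $\la t-r\ra$ offers nothing there), while the local energy decay estimate is the only tool near $\bdy$ but it costs $D$ derivatives and only controls $u'$, not $u$ itself. Reconciling these — propagating control of the undifferentiated solution $u$ into the local region, and ensuring the derivative count $N$ is large enough to absorb the loss $D$ at every application while still closing the top-order energy estimate — is the delicate bookkeeping at the heart of the proof, and is presumably where the hypothesis $\sigma>0$ (decay faster than $t^{-2}$) is consumed.
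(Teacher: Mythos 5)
Your overall architecture does match the paper's: local existence, then a vector field argument using only translations, rotations, and a cut-off scaling field, with the boundary handled by boundary term estimates fed by the local energy decay hypothesis \eqref{local energy}, the dispersive input coming from a Klainerman--Sideris type estimate combined with the Du--Zhou Sobolev lemma, and the single logarithm from the quadratic interaction capping the lifespan at $\exp(c/\epsilon)$. (The paper runs a Picard iteration $w_k$ measured in a nine-component norm $M_k(T)$ rather than a continuity argument on $u$; that difference is largely cosmetic.) But there are genuine gaps in the proposal.

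First, your justification for commuting vector fields through the problem is incorrect: the rotations $\Omega_{ij}$ do \emph{not} preserve the Dirichlet condition on $\bdy$ for a general obstacle --- only $\partial_t$ does. They merely have bounded normal components on $\bdy$, and exploiting this requires the hierarchy the paper sets up: elliptic regularity, localized energy estimates near $\K$, and norms in which strictly fewer $Z$'s and $L$'s than pure derivatives $\partial$ are permitted at top order (this is exactly the structure of $M_k(T)$ and of $Y_{N,M}$). Likewise \eqref{local energy} cannot simply be ``commuted with $Z^\mu$''; in the paper it is applied only to $\partial^\mu$ and $L^m$ derivatives, through cutoffs and Duhamel, which is where the loss of $D$ derivatives and the rate $\la t\ra^{-2-\sigma}$ are actually consumed.

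Second, and more seriously, a bootstrap quantity consisting only of $\sum\|\partial Z^\mu u(t,\cdot)\|_2^2$ cannot close, because the nonlinearity depends on $u$ itself: one must separately control $\|Z^\mu u(t,\cdot)\|_2$ with no derivative, together with the $\log$-weighted spacetime norms $\log(2+T)^{-1/2}\|\la x\ra^{-1/2}Z^\mu u\|_{L^2_{t,x}}$, and these do not follow from the energy identity. This is where the paper uses the Du--Zhou lemma a second time, through Riesz transforms (Proposition \ref{u bound prop}) and the cutoff estimate \eqref{cutoff estimate}, and --- crucially --- through the divergence-form estimate (Proposition \ref{divergence form prop}) to handle the worst quasilinear contribution $b^{\alpha\beta}u\,Z^\mu\partial_\alpha\partial_\beta u$ when the full number of vector fields falls on the second-order factor, a term that does not fit the weighted $L^2_{t,x}$ framework at all. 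None of this machinery appears in your outline, and your quoted pointwise rates are also off: what the method yields is $|Z^\mu u|\lesssim\la r\ra^{-1}\la t-r\ra^{-1}$ (via the weighted $L^\infty$ component of the norm), not $\la t\ra^{-1/2}$ for $u$ and $\la t\ra^{-3/2}$ for $u'$. The logarithmic loss and hence the lifespan $\exp(c/\epsilon)$ do survive, but only after the missing $L^2$ estimates for the undifferentiated solution and the divergence-form device are supplied and the bootstrap norm is enlarged to include them.
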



We are assuming here that the Cauchy
data are compactly supported.  It is likely that it would suffice to
take the data to be small in certain weighted Sobolev norms.

The lifespan in Theorem \ref{thm1} was first proved in
\cite{Hormander} for boundaryless wave equations in 4 dimensions.
However, \cite{LX} and \cite{LS2} demonstrate
that this bound for the boundaryless case can be improved to $T_\epsilon \gtrsim \exp(c / \epsilon^2)$.
Thus, it may be possible that the lifespan bound presented in this paper can be improved.
In the other direction, it was shown in \cite{Sideris} and later in \cite{Zhou2} and \cite{Yordanov} that solutions to \eqref{main} must blow up in finite time. 
See also \cite{ZhouHan} for related blow up results for semilinear wave equations.
When $Q$ only depends on $u'$ and $u''$, it is well-known that
solutions corresponding to sufficiently small data exist globally.
See, e.g., \cite{Klainerman2}, \cite{Htext}, \cite{So2}, \cite{H3}.
The dependence of $Q$ on the solution
$u$ itself instead of only its first and second order derivatives hinders
many of the energy methods that are commonly employed in proving lifespan bounds.

For exterior domains, \cite{MS2} proved small data global existence
under similar conditions on the geometry when $n\ge 4$ and $Q$ is
independent of $u$.
The first paper establishing lifespan bounds for exterior domain problems in case 
$Q$ depends on $u$ at the quadratic level is \cite{DZ}.  They
established an analog of the lifespan bound $T_\epsilon \gtrsim \epsilon^{-2}$ of \cite{Lindblad} for 
3 dimensional wave equations outside of a star-shaped obstacle.  A subsequent paper 
\cite{DMSZ} proved an exterior domain analog of the almost global
existence theorem of
\cite{Hormander} for star-shaped obstacles in 4 spatial dimensions.
It is precisely this geometric condition that we are seeking to relax.
Moreover \cite{MS5} shows global existence exterior to star-shaped
obstacles provided $Q_{uu}(0,0,0)=0$, which is an exterior domain
analog of another result of \cite{Hormander}.

The previous paper of the authors \cite{HM}
extended the result of \cite{DZ} to exterior domains that contain
trapped rays.  This paper is a follow-up
to our previous work and proves the lifespan bound of \cite{Hormander} and \cite{DMSZ}
under weaker geometric assumptions.


The proof shall utilize the method of invariant
vector fields \cite{Klainerman}, which was adapted to the exterior domain
setting in \cite{KSS, KSS3} and \cite{MS3,MS2}. Although this paper primarily concerns solutions in 4 spatial dimensions, many of the estimates we shall state also hold in other dimensions. In $\R \times \R^n$, we set 
\begin{equation} \label{invariant vfs}
Z=\{\partial_\alpha, \Omega_{ij}=x_i\partial_j-x_j\partial_i\,:\,
0\le \alpha\le n,\, 1\le i<j\le n \},
\end{equation}
and $L=t\partial_t + r\partial_r$ where $r=|x|$.
The $\Omega_{ij}$ are the generators of spatial rotations in $\R^n$, and 
$L$ is the scaling vector field.  We shall frequently make
use of the fact that
\[[\Box,Z]=0,\quad [\Box, L] = 2\Box.\]
These vector fields are regarded as invariant since they share the property that
if $\square u =0$, then $\square Zu = 0$ and $\square Lu = 0$. 
In earlier works such as \cite{Klainerman2}, Lorentz boosts 
$\Omega_{0j} = t \partial_j + x_j \partial_t$ were also included in 
the collection \eqref{invariant vfs} when dealing 
with single-speed wave equations in the case that there is no boundary.
We omit them since they seem to be compatible with neither 
Dirichlet boundary value problems nor with systems of wave equations with
multiple wave speeds. In particular, boosts are not favorable for these sorts of
problems since they have an unbounded normal component
on $\bdy$ as well as an associated wave speed: 
$[ ( \partial_t^2 - c^2 \Delta ) , \Omega_{0j} ] u \neq 0$ if $c \neq 1$.
While all members of $Z$ commute nicely with $\square$, only the
generators of 
time translations $\partial_t$ also preserve the Dirichlet boundary conditions. 
However, \cite{KSS} demonstrated how to adapt the methods to the
remaining vector fields in $Z$ using that they
almost preserve the Dirichlet boundary conditions in the sense that 
their normal components are uniformly bounded on
$\bdy$.  In particular, this allows them to be handled using elliptic regularity 
and localized energy estimates.
Although the scaling vector field has coefficients that are unbounded for large $t$,
its normal component is also bounded on $\bdy$.  Thus, while it may be
used, we will need to employ few $L$ relative to
the number of vector fields from $Z$. This method of proof was started
in \cite{KSS3} and was further developed in
\cite{MS3}, \cite{MNS, MNS2} and \cite{HM}. 

The previous result \cite{DMSZ} assumed
that $\K$ is star-shaped in order to prove useful energy and localized energy estimates 
for variable coefficient wave equations.
The techniques used in their paper are reminiscent of \cite{MS}, which reproved the
earlier result of \cite{KSS3} without using the scaling vector field $L$.
It is not clear that one can obtain analogous localized energy estimates under
the weaker geometric assumption \eqref{local energy} that we are using. 
To make up for the lack of such localized estimates when $\K$ is not star-shaped, in \cite{HM}
the authors used H\"{o}rmander's $L^1-L^\infty$ estimate \cite{Htext} that was adapted to the exterior domain 
setting in \cite{KSS3}. A higher dimensional analogue of this estimate
was proven in \cite{HormanderL1Linfty}, but that estimate involves
Lorentz boosts and does
not seem easily applicable to the problem at hand since, at this point
in time, the authors are not aware of a proof that eliminates the boosts. 

The main innovation of this paper is to combine the methods of \cite{DZ}, \cite{MS3} and \cite{KlainermanSideris} to prove almost global existence. This includes using the estimates of
\cite{DZ} and \cite{DMSZ}
to obtain a useful pointwise estimate for $u$ rather than $u^\prime$ (cf. \cite[Lemma 4.2]{MS2}), whose role in our proof will be analogous to the role of H\"{o}rmander's $L^1 - L^\infty$ estimate \cite{Htext} in previous results such as \cite{KSS3}, \cite{MS3}, \cite{DZ}, and \cite{HM}. Just
as with H\"{o}rmander's estimate, our new estimate also necessitates the use of the scaling 
vector field in our proof.
In previous papers, such as \cite{DZ} and \cite{KSS3}, the star-shaped assumption
allowed one to use methods similar to those of Morawetz \cite{Morawetz}
to show that the worst boundary term resulting from $L$ has a 
beneficial sign and, therefore, can be ignored.
We shall control norms that include $L$ in a manner similar to \cite{MS3,MS2}
by using boundary term estimates that use our local energy decay assumption
\eqref{local energy}. 

The remainder of the article is organized as follows.  In Section 2,
we state our main energy and localized energy estimates. For the most part, these
consist of energy and localized energy estimates combined with the Sobolev estimate used in \cite{DZ} and \cite{DMSZ} 
as well as the main estimates of \cite{MS3,MS2}, 
which enable the use of the scaling vector field 
in exterior domains where $\K$ is not star-shaped. 
In Section 3, we state our main pointwise estimates. These include a well-known application
of the Sobolev embedding theorem on annuli (see \cite{Klainerman}). In this section we also combine 
the estimate of \cite{DZ} with an estimate that is similar to those 
of \cite{KlainermanSideris} 
to establish our main pointwise dispersive estimate. This is 
reminiscent of the estimates appearing in e.g. \cite{Klainerman}, \cite{KlainermanSideris}, 
\cite{Sideris2}, \cite{SiderisTu}, \cite{H3} and \cite{HY}.
In Section 4, we prove almost
global existence of small-data solutions to \eqref{main} as stated in Theorem \ref{thm1}.

\noindent \textbf{Acknowledgements:}
The first author would like to thank Thomas Sideris for the helpful conversations during the process of writing the introduction to this paper.

\newsection{$L^2$ estimates}

In this section, we state the main $L^2$ estimates that we shall need in
our iteration argument. These estimates largely consist of well-known energy and localized
energy estimates of the solution when vector fields from the collection $\{L,Z\}$ are being applied.
The most basic energy and localized energy estimate 
for wave equations on $\R \times \R^4$ is the following.
\begin{multline} \label{energy}
\sup_{t \in [0,T]} \norm{v'(t,\cdot)}{2} 
+ \sup_{R \geq 1} R^{-1/2} \norm{v'}{L^2_{t,x}([0,T] \times \{ |x| < R \} )}
\lesssim \norm{v'(0,\cdot)}{2} \\
+ \inf_{\square v = f + g} \left( \int^T_0 \norm{f(s,\cdot)}{2} \; ds 
+ \sum_{j \geq 0} \norm{\left< x \right>^{1/2} g}{L^2_{t,x}([0,T] \times \{ \left< x \right> \approx 2^j \} )} \right).
\end{multline}
Most of the $L^2_x$ and weighted $L^2_{t,x}$ estimates for $v'$ presented in this paper will be variants of the above estimate.
A version of \eqref{energy} was first proved in \cite{Morawetz2} and
was applied in many subsequent papers.  See, e.g., \cite{MS, MS4}, \cite{MT2} for 
versions of the estimate that are more reminiscent of \eqref{energy}
and for a more complete history.  In \cite{KSS,KSS3}, such localized energy estimates were applied to 
prove long time existence in the exterior domain setting.
Since then nearly every long time existence result for obstacle problems such as \eqref{main} 
has employed localized energy estimates similar to \eqref{energy}.  An
important consequence of \eqref{energy} that we will apply throughout this
paper is the following.
\begin{lemma} \label{main energy}
Let $v \in C^\infty (\R \times \R^4)$, and assume that $v$ vanishes for large $|x|$ for every $t$. Then for $T \geq 1$, we have
\begin{multline} \label{main energy ineq}
\norm{ \left< x \right>^{-3/4} v' }{L^2_{t,x}([0,T] \times \R^4)} + \log(2 + T)^{-1/2} \norm{ \left< x \right>^{-1/2} v' }{L^2_{t,x}([0,T] \times \R^4)} \\
\lesssim \norm{v'(0,\cdot)}{2} 
+ \inf_{\square v = f + g} \left( \int^T_0 \norm{f(s,\cdot)}{2} \; ds 
+ \sum_{j \geq 0} \norm{\left< x \right>^{1/2} g}{L^2_{t,x}([0,T] \times \{ \left< x \right> \approx 2^j \} )} \right).
\end{multline}
\end{lemma}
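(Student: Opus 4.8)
The plan is to derive \eqref{main energy ineq} from \eqref{energy} by combining the estimate over dyadic spatial shells with a summation in $j$, exploiting the extra negative power of $\langle x\rangle$ in the weights on the left-hand side. First I would dyadically decompose $\R^4$ into the regions $\{\langle x\rangle \approx 2^j\}$ for $j \ge 0$. On each such shell, $\langle x \rangle^{-1/2} \approx 2^{-j/2}$ and $R \approx 2^j$, so \eqref{energy} applied with $R \approx 2^j$ gives
\[
2^{-j/2}\norm{v'}{L^2_{t,x}([0,T]\times\{\langle x\rangle\approx 2^j\})} \lesssim R^{-1/2}\norm{v'}{L^2_{t,x}([0,T]\times\{|x|<R\})} \lesssim \text{RHS},
\]
where RHS denotes the right-hand side of \eqref{energy} (equivalently of \eqref{main energy ineq}), which is independent of $j$. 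In other words, each shell contributes at most a fixed multiple of RHS to $\norm{\langle x\rangle^{-1/2} v'}{L^2}$ on that shell.

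Next I would sum the squares of these shell contributions. For the weight $\langle x\rangle^{-3/4}$, I would write $\langle x\rangle^{-3/4} = \langle x \rangle^{-1/2}\cdot \langle x\rangle^{-1/4}$; on the shell $\{\langle x\rangle \approx 2^j\}$ this gains a factor $\approx 2^{-j/4}$ relative to the $\langle x\rangle^{-1/2}$ weight, so
\[
\norm{\langle x\rangle^{-3/4} v'}{L^2_{t,x}}^2 = \sum_{j\ge 0}\norm{\langle x\rangle^{-3/4} v'}{L^2_{t,x}([0,T]\times\{\langle x\rangle\approx 2^j\})}^2 \lesssim \sum_{j\ge 0} 2^{-j/2}\cdot \text{RHS}^2 \lesssim \text{RHS}^2,
\]
using the convergence of $\sum_j 2^{-j/2}$. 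Taking square roots yields the first term on the left of \eqref{main energy ineq}. For the weight $\langle x\rangle^{-1/2}$, the same decomposition gives $\norm{\langle x\rangle^{-1/2}v'}{L^2_{t,x}}^2 = \sum_{j\ge 0}\norm{\langle x\rangle^{-1/2}v'}{L^2_{t,x}([0,T]\times\{\langle x\rangle\approx 2^j\})}^2$, but now each term is merely $\lesssim \text{RHS}^2$ with no decay in $j$, so a naive sum diverges. Here I would use that $v$ vanishes for large $|x|$ together with finite propagation speed (and $T\ge 1$): the solution is supported in $\{|x| \lesssim \langle T\rangle + \text{(size of data/forcing support)}\}$, so effectively only $O(\log(2+T))$ dyadic shells contribute. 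Hence $\norm{\langle x\rangle^{-1/2}v'}{L^2_{t,x}}^2 \lesssim \log(2+T)\,\text{RHS}^2$, which after taking square roots and multiplying by $\log(2+T)^{-1/2}$ gives the second term. Adding the two estimates completes the proof.

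The main obstacle is the logarithmic factor: the $\langle x\rangle^{-1/2}$ weight is exactly critical for the dyadic sum, so the argument must track the number of nonempty shells. If one does not want to invoke compact support of $v$ and pure finite propagation speed — for instance if the forcing is not compactly supported in space — one instead truncates the sum at $j \lesssim \log_2\langle T\rangle$ and estimates the tail $j \gtrsim \log_2\langle T\rangle$ separately using that on those far shells $\langle x\rangle \gtrsim T \gtrsim s$ for all $s\in[0,T]$, so the weight there can be compared favorably; but given the hypothesis that $v$ vanishes for large $|x|$, the clean finite-propagation-speed argument suffices. A minor point to be careful about is the form of the forcing term: \eqref{energy} already allows the splitting $\square v = f + g$ with the two different norms on $f$ and $g$, and since that entire quantity is independent of $j$ and of $R$, it passes through the dyadic summation unchanged, so no reorganization of the forcing is needed.
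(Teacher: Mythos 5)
Your treatment of the $\langle x\rangle^{-3/4}$ term is fine and matches the intended argument: the shell-by-shell application of \eqref{energy} with $R\approx 2^j$ plus the convergent factor $2^{-j/2}$ needs no further input. The gap is in your main argument for the logarithmic term. You claim that because $v$ vanishes for large $|x|$, finite propagation speed confines the solution to $\{|x|\lesssim \langle T\rangle + \text{(support size)}\}$, so only $O(\log(2+T))$ shells contribute. But the hypothesis ``$v$ vanishes for large $|x|$ for every $t$'' carries no quantitative bound on the vanishing radius, and nothing ties that radius to $T$: the data and the forcing $\Box v$ may be supported in a ball of radius $R_0$ arbitrarily large compared with $T$. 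Your count of nonempty shells is then $O(\log(2+R_0+T))$, not $O(\log(2+T))$, and the resulting constant degrades with the support of $v$, which the lemma does not permit (the implicit constant must be uniform; the compact-support hypothesis is only there to make the norms finite and justify the underlying energy identities, not to limit the number of shells).

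The correct repair is exactly the alternative you mention and then set aside: truncate the dyadic sum at $j\lesssim \log_2\langle T\rangle$ (this is where the $\log(2+T)$ factor comes from), and handle the region $\{|x|>T\}$ not by shells but by the first term on the left of \eqref{energy}: there $\langle x\rangle^{-1/2}\le T^{-1/2}$ (and $\langle x\rangle^{-3/4}\le T^{-3/4}$), so
\[
\norm{\langle x\rangle^{-1/2} v'}{L^2_{t,x}([0,T]\times\{|x|>T\})}\le T^{-1/2}\,T^{1/2}\sup_{0\le t\le T}\norm{v'(t,\cdot)}{2}\lesssim \text{RHS},
\]
with no $j$-summation at all. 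This split at $|x|\approx T$ (uniform energy for $|x|>T$, dyadic decomposition only for $|x|<T$) is precisely the paper's proof; your reasoning for why that step is unnecessary is where the argument fails.
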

The proof of this lemma is fairly straightforward. In the case where
the spatial norms in the left hand side of \eqref{main energy ineq} are taken over $\{ |x| > T \}$, the left hand side of \eqref{main energy ineq} is bounded by the first term in the left hand side of \eqref{energy}. For $|x| < T$, we decompose
dyadically in $\left< x \right>$ and apply \eqref{energy} to each
term.  See, e.g., \cite{KSS}, \cite{M2}, \cite{MS, MS4} for more
detailed arguments of this sort.

\subsubsection{Estimates for $\|v\|_{L^2_x}$ on $\R \times\R^4$}\label{bdylessSection}
In this section, we seek to establish control of the $L^2$ norm of the
solution without the space-time gradient.  It will suffice to prove
our estimates for boundaryless wave equations.  In the sequel, we
shall cutoff away from the obstacle to reduce to this case.  Near the
boundary, we note that Dirichlet boundary conditions
($v(t,\cd)\bigl|_{\bdy} = 0$) imply the inequality
\begin{equation} \label{local control}
\norm{v(t,\cdot)}{L^2(\{ x \in \ext : |x| < 2\} )} \lesssim \norm{v'(t,\cdot)}{L^2(\{ x \in \ext : |x| < 2\})},\end{equation}
which allows us to control the $L^2$ norm of the solution in terms of the
energy in this remaining region.

We now state the following Sobolev-type 
estimate of \cite{DZ}.  Ignoring the behavior near the origin, the
estimate is roughly the dual to estimates appearing in \cite{Sideris2}.  For detailed proofs the reader should consult \cite{DZ} and \cite{DMSZ}.
\begin{lemma} \label{DZ sobolev estimate}
Let $n \geq 3$ and $h \in C^\infty_0 (\R^n)$. Then it follows that
\begin{equation} \label{DZ estimate ineq}
\norm{h}{\dot{H}^{-1}} \lesssim \norm{h}{L^{2n/(n+2)}(|x| < 1)} + \norm{|x|^{-(n-2)/2} h}{L^1_r L^2_\omega (|x| > 1)} .
\end{equation}
\end{lemma}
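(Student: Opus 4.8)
The goal is to bound $\|h\|_{\dot H^{-1}}$, i.e. $\|\,|\nabla|^{-1}h\,\|_{L^2(\R^n)}$, by a norm that is $L^{2n/(n+2)}$ near the origin and an $L^1_rL^2_\omega$ norm against the weight $|x|^{-(n-2)/2}$ in the exterior region. The plan is to split $h = h\chi_{\{|x|<1\}} + h\chi_{\{|x|\ge1\}} =: h_0 + h_\infty$ and estimate the two pieces separately, using $\|h\|_{\dot H^{-1}} \le \|h_0\|_{\dot H^{-1}} + \|h_\infty\|_{\dot H^{-1}}$.

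For $h_0$, supported in the unit ball, I would simply invoke the Sobolev embedding $\dot H^1(\R^n) \hookrightarrow L^{2n/(n-2)}(\R^n)$ in dual form: $\|h_0\|_{\dot H^{-1}} \lesssim \|h_0\|_{L^{(2n/(n-2))'}} = \|h_0\|_{L^{2n/(n+2)}} = \|h\|_{L^{2n/(n+2)}(|x|<1)}$. (This is where $n\ge 3$ is used, so that $2n/(n-2)$ is a legitimate Sobolev exponent.) That handles the first term on the right-hand side immediately.

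For $h_\infty$, the efficient route is to dualize directly: $\|h_\infty\|_{\dot H^{-1}} = \sup\{\,|\langle h_\infty, \phi\rangle| : \|\phi\|_{\dot H^1}\le 1\,\}$. Writing the pairing in polar coordinates and inserting the weight,
\[
|\langle h_\infty,\phi\rangle| \le \int_{|x|>1} |x|^{-(n-2)/2}|h(x)|\,\cdot\,|x|^{(n-2)/2}|\phi(x)|\,dx
\le \big\|\,|x|^{-(n-2)/2}h\,\big\|_{L^1_rL^2_\omega(|x|>1)}\,\sup_{r>1} r^{(n-2)/2}\|\phi(r\omega)\|_{L^2_\omega}.
\]
So the matter reduces to the trace-type bound $\sup_{r>1} r^{(n-2)/2}\|\phi(r\cdot)\|_{L^2(\S^{n-1})} \lesssim \|\phi\|_{\dot H^1(\R^n)}$. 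This is exactly the radial-trace inequality behind the Klainerman–Sideris/Sideris weighted estimates alluded to in the statement: it follows by writing $r^{n-2}\|\phi(r\cdot)\|_{L^2_\omega}^2 = -\int_r^\infty \partial_\rho\big(\rho^{n-2}\|\phi(\rho\cdot)\|_{L^2_\omega}^2\big)\,d\rho$, expanding the derivative, and controlling the resulting terms by $\int_{|x|>1}(|\nabla\phi|^2 + |\phi|^2/|x|^2)\,dx$, after which the Hardy inequality $\|\phi/|x|\|_{L^2}\lesssim\|\nabla\phi\|_{L^2}$ (again valid for $n\ge3$) absorbs the lower-order piece into $\|\phi\|_{\dot H^1}$. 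Combining the two pieces gives \eqref{DZ estimate ineq}.

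The main obstacle is the exterior estimate: one must be slightly careful that the decomposition $h=h_0+h_\infty$ does not destroy the $\dot H^{-1}$ bound (it does not, since $\dot H^{-1}$ is subadditive), and the radial-trace inequality must be proved in the homogeneous space $\dot H^1$ rather than $H^1$, which is precisely why the Hardy inequality — and hence the restriction $n\ge 3$ — is needed to reconstitute the $\|\phi\|_{\dot H^1}$ norm on the right. For the detailed execution, including the behavior near the origin that is deliberately suppressed in the $L^{2n/(n+2)}$ ball term, I would refer to \cite{DZ} and \cite{DMSZ} as the statement already does.
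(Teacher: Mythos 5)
Your argument is correct, and it is essentially the approach the paper has in mind: the paper gives no proof of this lemma, deferring to \cite{DZ} and \cite{DMSZ} and remarking that the estimate is ``roughly the dual to estimates appearing in \cite{Sideris2},'' which is precisely your route of dual Sobolev embedding near the origin plus duality against the Sideris-type radial trace inequality $\sup_{r} r^{(n-2)/2}\|\phi(r\cdot)\|_{L^2(\S^{n-1})}\lesssim \|\nabla\phi\|_{L^2}$ (with Hardy's inequality absorbing the lower-order term) in the exterior region.
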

{\noindent The mixed norm appearing in \eqref{DZ estimate ineq} is
  defined as}
\[
\norm{h}{L^p_r L^q_\omega (|x| > 1)} = \left( \int^\infty_1 \left[ \int_{\S^{n-1}} \left| h(r\omega) \right|^q \; d\omega \right]^{p/q} \; r^{n-1}\; dr \right)^{1/p} .
\]

Using this lemma, one obtains the following proposition, which first appeared in \cite{DZ} and \cite{DMSZ}.
\begin{prop} \label{u bound prop}
Let $v \in C^\infty (\R \times \R^4)$, and assume that $v$ vanishes for
large $|x|$ for every $t$. Then for any $T \geq 1$, we have
\begin{multline} \label{u bound}
		\norm{v}{L^\infty_tL^2_x([0,T]\times \R^4)} + \log( 2 + T )^{-1/2} \norm{\la x\ra^{-1/2}
    v}{L^2_{t,x}([0,T]\times \R^4)} \lesssim \norm{v(0,\cd)}{2} +
    \norm{\partial_t v(0,\cd)}{\dot{H}^{-1}} \\
		+ \int_0^T \norm{|x|^{-1} \Box v(s,\cd)}{L^1_rL^2_\omega(|x|>1)}\;ds +
		\int_0^T \norm{\Box v(s,\cd)}{L^{4/3}(|x|<1)}\;ds.
\end{multline}
\end{prop}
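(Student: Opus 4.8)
The plan is to apply the $\dot H^{-1}$ Sobolev estimate of Lemma \ref{DZ sobolev estimate} to the energy inequality for $\Box v$ after one step of integration in time, thereby trading the loss of one spatial derivative for a gain of one time derivative on the data and source. More precisely, let $w(t,\cd)$ be the solution of $\Box w = v$ with zero Cauchy data, or more efficiently work with $v$ itself and use the representation $v(t) = \partial_t v$-type identities; the cleanest route is to write $v = \Box^{-1}(\Box v)$ in the Duhamel sense and estimate the $L^2_x$ norm of $v$ by the $\dot H^{-1}$ norm of its data together with a time integral of $\dot H^{-1}$ norms of the forcing $\Box v$, then apply \eqref{DZ estimate ineq} to each $\dot H^{-1}$ norm. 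Concretely, the standard energy identity gives, for each fixed $t$,
\[
\norm{v(t,\cd)}{2} \lesssim \norm{v(0,\cd)}{2} + \norm{\partial_t v(0,\cd)}{\dot H^{-1}} + \int_0^t \norm{\Box v(s,\cd)}{\dot H^{-1}}\,ds,
\]
which is just the usual energy estimate applied to a "half-derivative-down" version of $v$; one sees this by noting $\partial_t(\|v\|_{\dot H^{-1}}^2 + \|\partial_t v\|_{\dot H^{-2}}^2)$-type bookkeeping, or by directly invoking \eqref{energy} for the function $\Lambda^{-1} v$ where $\Lambda = (-\Delta)^{1/2}$, since $\Lambda$ commutes with $\Box$.

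Next I would feed Lemma \ref{DZ sobolev estimate} with $n=4$ into the right-hand side: for $n=4$ one has $2n/(n+2) = 4/3$ and $(n-2)/2 = 1$, so \eqref{DZ estimate ineq} reads
\[
\norm{h}{\dot H^{-1}} \lesssim \norm{h}{L^{4/3}(|x|<1)} + \norm{|x|^{-1} h}{L^1_r L^2_\omega(|x|>1)},
\]
which matches exactly the last two terms of \eqref{u bound} when applied with $h = \Box v(s,\cd)$ and integrated in $s$. This handles the first term on the left of \eqref{u bound}. For the second, weighted $L^2_{t,x}$ term, the plan is identical in spirit to Lemma \ref{main energy}: one again applies \eqref{energy} — more precisely the localized energy piece $\sup_{R\ge 1} R^{-1/2}\norm{v'}{L^2_{t,x}}$ — to $\Lambda^{-1} v$, then performs the dyadic decomposition in $\la x\ra$ over the region $|x| < T$ (picking up the $\log(2+T)^{1/2}$ from summing the $O(\log T)$ dyadic shells) and bounds the region $|x| > T$ by the uniform-in-time $L^2_x$ bound just established. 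Throughout, the forcing terms coming out of \eqref{energy} are again converted via Lemma \ref{DZ sobolev estimate} into the mixed-norm and $L^{4/3}$ expressions on the right of \eqref{u bound}.

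The main technical obstacle is making the operator $\Lambda^{-1}$ and the associated $\dot H^{-1}$-level energy estimate rigorous in a way that is compatible with applying \eqref{energy}: one must check that $\Lambda^{-1}v$ is still an admissible function for \eqref{energy} (it decays in $x$ but perhaps not compactly, so a limiting/truncation argument is needed), and that the $\dot H^{-1}$ norm of $\partial_t v(0,\cd)$ and of $\Box v(s,\cd)$ is finite and controlled — this is where the hypothesis $v \in C^\infty$ vanishing for large $|x|$ is used, together with the fact that $\Box v$ has compact spatial support issues handled by splitting into $|x|<1$ and $|x|>1$. A secondary point requiring care is the bookkeeping of the logarithmic factor: one must verify that the number of dyadic shells contributing inside $\{|x| < T\}$ is $O(\log(2+T))$ and that the $R^{-1/2}$ gain from \eqref{energy} combined with the $\la x\ra^{-1/2}$ weight produces a bounded (not growing) contribution from each shell, so that the sum is genuinely $O(\log(2+T))$ and the stated normalization by $\log(2+T)^{-1/2}$ is correct. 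Since detailed arguments of exactly this dyadic type are carried out in the references cited after Lemma \ref{main energy} (\cite{KSS}, \cite{M2}, \cite{MS, MS4}), I would only sketch this part and refer the reader there, and likewise defer the detailed proof of the $\dot H^{-1}$ Sobolev input to \cite{DZ} and \cite{DMSZ}.
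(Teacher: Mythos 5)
Your proposal is correct and matches the paper's argument: the paper likewise proves \eqref{u bound} by applying \eqref{main energy ineq} (together with the energy piece of \eqref{energy}) to a combination of Riesz transforms of $v$ --- which is the same device as your $\Lambda^{-1}v$, since $\Delta^{-1}\partial_j = R_j\Lambda^{-1}$ --- and then invoking Lemma \ref{DZ sobolev estimate} with $n=4$ to convert the $\dot H^{-1}$ norms of the data and of $\Box v$ into the $L^{4/3}(|x|<1)$ and $L^1_rL^2_\omega(|x|>1)$ terms. Your extra remarks on the non-compact support of $\Lambda^{-1}v$ and the dyadic logarithmic bookkeeping are reasonable technical caveats that the paper handles implicitly by citation.
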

To prove Proposition \ref{u bound prop}, one needs only to apply
\eqref{main energy ineq} to a combination of the Riesz transforms of
$u$. After doing so, applying Lemma \ref{DZ sobolev estimate} finishes the proof.

To handle the commutator terms that result from applying a spatial
cutoff to the solution, we argue similarly but instead measure the
forcing term using the last term in the right of \eqref{main energy
  ineq}.  Indeed, by arguing as above, if $v$ solves
\begin{equation}\label{linear v}
\begin{cases}
  \Box v = G,\quad (t,x)\in \R\times \R^4,\\
   v(t,x)=0,\quad t\le 0,
\end{cases}\end{equation}
we get
\begin{multline} \label{inv laplace}
\norm{v}{L^\infty_t L^2_x ([0,T] \times \R^4)} + \log(2 + T )^{-1/2} \norm{\left< x \right>^{-1/2} v}{L^2_{t,x} ([0,T] \times \R^4)} \\
\lesssim \sum_{j=1}^4 \norm{\left< x \right>^{1/2 + \delta}(\Delta^{-1} \partial_j G)}{L^2([0,T] \times \R^4)},
\end{multline}
for any $\delta>0$.
As the kernel of the operator $\Delta^{-1} \partial_j$ is $O(|x -
y|^{-3})$ for $j=1,\dots,4$, it follows from Young's inequality that
\[
\norm{\left< x \right>^{1/2 + \delta} (\Delta^{-1} \partial_j G) (t,\cdot)}{2} \lesssim \norm{G (t,\cdot)}{2}, 
\]
if $ G(t,x) = 0$ for $|x|  > 3$ and $0 < \delta < 1/2$. 
Combining this inequality with \eqref{inv laplace}, we obtain the following estimate, which appeared in \cite{DMSZ}.
\begin{prop}
Let $v \in C^\infty ( \R \times \R^4 )$ be a solution to \eqref{linear
  v} where $G(t,x) = 0$ for $|x| > 3$.  Then it follows that 
\begin{equation} \label{cutoff estimate}
\norm{v}{L^\infty_t L^2_x ([0,T] \times \R^4)} + \log(2 + T )^{-1/2} \norm{\left< x \right>^{-1/2} v}{L^2_{t,x} ([0,T] \times \R^4)} \lesssim \norm{G}{L^2_{t,x} ([0,T] \times \{ |x| < 3 \})} .
\end{equation}
\end{prop}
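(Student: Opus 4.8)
The plan is to obtain \eqref{cutoff estimate} as a short corollary of the two ingredients already assembled above: the estimate \eqref{inv laplace} and the weighted bound for $\Delta^{-1}\partial_j$ recorded immediately after it. Since the hypotheses of \eqref{inv laplace} are met — $G$ vanishes for $t\le 0$ by \eqref{linear v} — it suffices, after fixing $\delta\in(0,1/2)$, to show that
\[
\sum_{j=1}^4 \norm{\left< x\right>^{1/2+\delta}(\Delta^{-1}\partial_j G)}{L^2([0,T]\times\R^4)} \lesssim \norm{G}{L^2_{t,x}([0,T]\times\{|x|<3\})}.
\]

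First I would fix $t$ and estimate the spatial norm $\norm{\left< x\right>^{1/2+\delta}(\Delta^{-1}\partial_j G)(t,\cdot)}{2}$ by $\norm{G(t,\cdot)}{2}$, splitting into the regions $\{|x|<6\}$ and $\{|x|\ge 6\}$. On the first region the weight is $O(1)$, and since $G(t,\cdot)$ is supported in $\{|x|<3\}$ while the kernel of $\Delta^{-1}\partial_j$ on $\R^4$ is $O(|x-y|^{-3})$ with $|z|^{-3}\mathbf{1}_{\{|z|<9\}}\in L^1(\R^4)$, Young's inequality controls the contribution. On the second region $|x-y|\approx|x|$ whenever $|y|<3$, so $|(\Delta^{-1}\partial_j G)(t,x)|\lesssim |x|^{-3}\norm{G(t,\cdot)}{1}\lesssim|x|^{-3}\norm{G(t,\cdot)}{2}$ by Cauchy--Schwarz on the fixed compact support; multiplying by the weight and integrating is finite because $\int_{|x|\ge 6}\left< x\right>^{1+2\delta}|x|^{-6}\,dx<\infty$, which is exactly where $\delta<1/2$ enters (the annulus $\{|x|\approx R\}$ carries four-dimensional volume $\sim R^4$). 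Squaring, integrating in $t\in[0,T]$, using $\norm{G(t,\cdot)}{2}=\norm{G(t,\cdot)}{L^2(\{|x|<3\})}$, and summing over $j$ then yields the displayed inequality and hence \eqref{cutoff estimate}.

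For completeness, \eqref{inv laplace} itself is obtained by the same device used for Proposition \ref{u bound prop}: write $v=\sum_{j=1}^4\partial_j w_j$ with $w_j=\Delta^{-1}\partial_j v$, note that $\Box w_j=\Delta^{-1}\partial_j G$ has vanishing data, apply \eqref{main energy ineq} together with the $\sup_t$ term of \eqref{energy} to each $w_j$ while placing the forcing in the weighted $g$-slot on the right, and use $|v|\le\sum_j|w_j'|$; the resulting dyadic sum over $\{\left< x\right>\approx 2^k\}$ collapses into a single $\left< x\right>^{1/2+\delta}$-weighted $L^2$ norm by Cauchy--Schwarz in $k$, which accounts for the extra $\delta>0$. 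There is no substantive obstacle here; the one point requiring care is the choice of $\delta$, which must be positive so that the dyadic pieces of \eqref{inv laplace} can be summed, and strictly less than $1/2$ so that the growing weight $\left< x\right>^{1/2+\delta}$ remains square-integrable against the $|x|^{-3}$ tail of $\Delta^{-1}\partial_j G$.
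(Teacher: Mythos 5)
Your argument is correct and takes essentially the same route as the paper: both deduce \eqref{cutoff estimate} by combining \eqref{inv laplace} with the weighted bound $\| \langle x \rangle^{1/2+\delta} (\Delta^{-1}\partial_j G)(t,\cdot)\|_{2} \lesssim \|G(t,\cdot)\|_{2}$, which comes from the $O(|x-y|^{-3})$ kernel of $\Delta^{-1}\partial_j$ via Young's inequality with $0<\delta<1/2$. Your near/far splitting and your sketch of \eqref{inv laplace} (Riesz-transform decomposition, vanishing data, dyadic summation requiring $\delta>0$) simply make explicit the details the paper leaves implicit, and they check out.
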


We shall need to take advantage of better bounds than those provided
by \eqref{u bound}
when the forcing term  is in divergence form. 
The following estimate appeared in \cite{MS5} and was inspired by the
similar estimates of \cite{Hormander} and \cite{Lindblad}.  See
\cite{HM} for a similar application in the 3-dimensional setting.
\begin{prop} \label{divergence form prop}
Let $v \in C^\infty ( \R \times \R^4 )$ solve
\begin{equation}
\left\{
\begin{array}{l}
\square v(t,x) = \sum_{j=0}^4 a_j \partial_j G(t,x) , \quad (t,x) \in \R \times \R^4, \\
v(0,x) = 0, \quad t \leq 0,
\end{array}
\right.
\end{equation}
where $a_j \in \R$. Also suppose that $v(t,x)$ vanishes for large $|x|$ for every $t$. Then it follows that
\begin{multline} \label{divergence form}
\norm{v}{L^\infty_tL^2_x([0,T]\times \R^4)} + \log( 2 + T )^{-1/2} \norm{\la x\ra^{-1/2}
    v}{L^2_{t,x}([0,T]\times \R^4)} \\
    \lesssim \norm{G(0,\cdot)}{\dot{H}^{-1}} + \int^T_0 \norm{G(s,\cdot)}{2} \; ds.
\end{multline}
\end{prop}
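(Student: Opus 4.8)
The plan is to reduce \eqref{divergence form} to an application of Proposition \ref{u bound prop} (more precisely, of the estimate \eqref{u bound} together with the analysis of the operator $\Delta^{-1}\partial_j$ that preceded Proposition \ref{cutoff estimate}). First I would split $v = v_1 + v_2$ where $v_1$ absorbs the part of the forcing localized near the origin and $v_2$ the part away from it; but a cleaner route is to work directly: since $\square$ commutes with the spatial Riesz-type operators, and since $\square v = \sum_j a_j \partial_j G$, one can integrate the forcing ``by parts in $\square$'' to gain a derivative. Concretely, let $w$ solve $\square w = \sum_j a_j G$ with zero Cauchy data in the past; then $v = \sum_j a_j \partial_j w$ (by uniqueness, since $\square$ and $\partial_j$ commute and the data vanish), and for each $j$, $\partial_j w$ is one of the spatial derivatives whose $L^2$ and weighted $L^2_{t,x}$ norms we wish to control. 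The point is that $\norm{\partial_j w}{2}$ is comparable to $\norm{w'}{2}$-type quantities only after one more inversion, so instead I would apply \eqref{u bound} to $w$ itself and then observe that bounding $v=\sum_j a_j\partial_j w$ requires controlling $\norm{\partial_j w}{L^\infty_t L^2_x}$ and $\norm{\la x\ra^{-1/2}\partial_j w}{L^2_{t,x}}$, which follow from the energy estimate \eqref{main energy ineq} applied to $w$ since those are dominated by $\norm{w'(t,\cd)}{2}$ and $\norm{\la x\ra^{-1/2}w'}{L^2_{t,x}}$ respectively.

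So the key steps, in order, are: (1) set $w$ to be the solution of $\square w = \sum_j a_j G$ with trivial past data, so that $v = \sum_j a_j \partial_j w$; (2) apply the energy–localized-energy estimate \eqref{main energy ineq} to $w$ to bound $\norm{\la x\ra^{-3/4} w'}{L^2_{t,x}} + \log(2+T)^{-1/2}\norm{\la x\ra^{-1/2}w'}{L^2_{t,x}}$, and separately the basic energy inequality \eqref{energy} to bound $\sup_t\norm{w'(t,\cd)}{2}$, each by $\norm{w'(0,\cd)}{2} + \int_0^T \norm{\square w(s,\cd)}{2}\,ds = \int_0^T\norm{\sum_j a_j G(s,\cd)}{2}\,ds$ (the initial energy of $w$ vanishes); (3) note $\norm{v}{L^\infty_t L^2_x} \le \sum_j |a_j|\,\norm{\partial_j w}{L^\infty_t L^2_x} \lesssim \sup_t \norm{w'(t,\cd)}{2}$ and similarly $\log(2+T)^{-1/2}\norm{\la x\ra^{-1/2}v}{L^2_{t,x}} \lesssim \log(2+T)^{-1/2}\norm{\la x\ra^{-1/2}w'}{L^2_{t,x}}$; (4) handle the low-regularity part: the term $\norm{G(0,\cdot)}{\dot H^{-1}}$ on the right of \eqref{divergence form} must come in because the naive energy of $w$ at time $0$ is $\norm{w'(0,\cd)}{2}=0$ but we are quietly smuggling an extra derivative out of $G$, so in fact one should instead apply \eqref{u bound} directly to $v$, treating the forcing $\sum_j a_j\partial_j G$ by pairing against the $\dot H^{-1}$ structure — writing $\square(\text{something}) $ to convert $\partial_t$-data and the origin/far-away splits of the forcing as in \eqref{DZ estimate ineq}. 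The honest argument: integrate the Duhamel formula for $v$ against the kernel one extra time; the $\partial_j$ falling on $G$ either lands on the inverse-Laplacian kernel $O(|x-y|^{-3})$ used for $\Delta^{-1}\partial_\ell$ in the passage preceding \eqref{cutoff estimate}, producing an $O(|x-y|^{-4})$ kernel that is still locally integrable enough after combining with the $\dot H^{-1}$ bound, or contributes the $\int_0^T\norm{G(s,\cd)}{2}\,ds$ term once a derivative is moved off of $G$ at the cost of $\norm{G(0,\cd)}{\dot H^{-1}}$ from the single boundary term in an integration by parts in $s$.

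Assembling: apply \eqref{u bound} to $v$; in the forcing terms $\int_0^T \norm{|x|^{-1}\square v}{L^1_rL^2_\omega(|x|>1)}\,ds + \int_0^T\norm{\square v}{L^{4/3}(|x|<1)}\,ds$ and the datum term $\norm{\partial_t v(0,\cd)}{\dot H^{-1}}$, substitute $\square v = \sum_j a_j\partial_j G$, move each $\partial_j$ onto the (smoothing) kernels implicit in the estimate via the identity relating $v$ to $\Delta^{-1}\partial_j$ of a solution with forcing $G$, and integrate by parts in $t$ when $j=0$ to trade $\partial_t G$ for a boundary term $\norm{G(0,\cd)}{\dot H^{-1}}$ plus $\int_0^T\norm{G(s,\cd)}{2}\,ds$; the spatial derivatives $\partial_j G$, $j=1,\dots,4$, are absorbed by noting that $\langle x\rangle^{1/2+\delta}\Delta^{-1}\partial_j\partial_\ell$ has kernel $O(|x-y|^{-4})$, which against the far-region $L^1_rL^2_\omega$ norm still maps $G(t,\cd)$ to $L^2$ boundedly after the $\dot H^{-1}$ interpolation — exactly the computation carried out before Proposition \ref{cutoff estimate}, now with one extra derivative and correspondingly one extra negative power, which is harmless for $0<\delta<1/2$.

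\textbf{Main obstacle.} The delicate point is step (4): honestly extracting the $\norm{G(0,\cd)}{\dot H^{-1}}$ term rather than a full $\norm{G(0,\cd)}{L^2}$. One cannot simply apply the energy inequality to $w$ with $\square w = \sum_j a_j G$, because that would demand $\norm{w'(0,\cd)}{2}$ which we do not have — so the integration by parts converting $\square v = \sum a_j\partial_j G$ into a gain of one derivative on the fundamental solution must be done carefully, keeping track of the single surface term at $s=0$, and verifying that the resulting kernel, now one degree more singular, is still controlled when paired with the mixed-norm and $L^{4/3}$ structures appearing in \eqref{u bound} via Lemma \ref{DZ sobolev estimate}. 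This bookkeeping — mirroring the $O(|x-y|^{-3})\rightsquigarrow O(|x-y|^{-4})$ step used for the cutoff estimate but now inside the $\dot H^{-1}$ Sobolev inequality rather than via plain Young's inequality — is where the real work lies; everything else is a routine consequence of \eqref{energy}, \eqref{main energy ineq}, and \eqref{u bound}.
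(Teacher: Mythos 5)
Your proposal has the right ingredients but the central identity in step (1) is false as stated, and your attempt to repair it in step (4) does not actually close the gap. If $w$ solves $\square w = G$ with vanishing Cauchy data, then $\sum_{j=0}^4 a_j\partial_j w$ solves the right equation but does \emph{not} have vanishing data: at $t=0$ one has $\partial_t\bigl(\sum_j a_j\partial_j w\bigr)(0,\cdot) = a_0\,\partial_t^2 w(0,\cdot) = a_0\,G(0,\cdot)$, so $v \neq \sum_j a_j\partial_j w$ whenever $a_0\neq 0$ and $G(0,\cdot)\neq 0$. This mismatch is precisely the source of the $\norm{G(0,\cdot)}{\dot H^{-1}}$ term, and it cannot be absorbed by the energy estimates \eqref{energy}, \eqref{main energy ineq} applied to $w$, since those only see $w'$ and $\int_0^T\norm{G(s,\cdot)}{2}\,ds$. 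You sense this in step (4), but the fix you sketch there (an extra integration against the fundamental solution, an $O(|x-y|^{-4})$ kernel, ``$\dot H^{-1}$ interpolation'') is both unnecessary and not a valid argument as described: an $O(|x-y|^{-4})$ kernel is not locally integrable in $\R^4$, so the Young's-inequality step that worked before Proposition \ref{cutoff estimate} does not carry over, and the claimed interpolation is never specified. So the key difficulty you correctly identify — getting $\dot H^{-1}$ of $G(0,\cdot)$ rather than $L^2$ — is left unresolved.

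The paper's proof resolves it with one explicit correction term: write $v = \sum_{j=0}^4 a_j\partial_j v_1 - a_0 v_2$, where $\square v_1 = G$ with vanishing Cauchy data and $\square v_2 = 0$ with data $v_2(0,\cdot)=0$, $\partial_t v_2(0,\cdot)=G(0,\cdot)$; the subtraction exactly cancels the data mismatch computed above, so the identity holds by uniqueness. The first piece consists of components of $v_1'$ and is controlled by \eqref{energy} and \eqref{main energy ineq} applied to $v_1$, giving $\int_0^T\norm{G(s,\cdot)}{2}\,ds$; the second piece is a free wave, and Proposition \ref{u bound prop} applied to $v_2$ (with $\square v_2=0$) yields exactly $\norm{v_2(0,\cdot)}{2}+\norm{\partial_t v_2(0,\cdot)}{\dot H^{-1}} = \norm{G(0,\cdot)}{\dot H^{-1}}$ — no further kernel analysis or integration by parts in Duhamel is needed. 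Your proposed boundary term at $s=0$ is, in fact, this same free wave $a_0 v_2$; had you identified it as such and estimated it by \eqref{u bound}, your argument would coincide with the paper's.
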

Here we need only observe that $v = \sum^4_{j=0} a_j \partial_j v_1 -
a_0 v_2$, where $\square v_1=  G$ with vanishing initial data and
$\square v_2 = 0$ with initial data $v_2(0,x) = 0, \partial_t v_2(0,x)
= G(0,x)$, and apply \eqref{main energy ineq} and \eqref{u bound}.

\subsubsection{Energy estimates on $\R \times \ext$}
We will need pointwise in time $L^2$ energy estimates 
for the proof of Theorem \ref{thm1}. 
Since we are concerned with solutions to quasilinear wave equations, we will need estimates for solutions to
perturbed wave equations.
When the solution satisfies the Dirichlet boundary conditions,
these estimates are well-known for the solution $v$ itself.  Indeed,
as the Dirichlet boundary conditions imply that $\partial_t v(t,x)=0$
for any $x\in \bdy$, such estimates follow from the standard energy
integral calculations that can be found in texts such as \cite{Htext}
and \cite{Sogge}. 

However, when one applies a vector field from our collection $\{ L , Z \}$, 
the Dirichlet boundary conditions are only preserved by time translations $\partial_t$.
Hence, it is more difficult to control the boundary terms that result from integrating by
parts.  A heirarchy of vector fields results.  From an estimate
involving only $\partial_t$ vector fields, using elliptic regularity,
an estimate for any $\partial_{t,x}$ vector field results.  Using that
the coefficients of $Z$ are $O(1)$ on the boundary of the compact
obstacle, the standard argument yields an estimate up to an error term
that can be controlled by localized energy estimates for $\partial^\mu
v$.  Here, however, we must permit more vector fields of the form
$\partial$ rather than $Z$.  Due to the large coefficient, particular care must be paid to
controlling $L$ on the boundary of $\K$.  Here we shall apply a cutoff
to the vector field $L$ so that the Dirichlet boundary conditions are
preserved by this new vector field.  The resulting commutator will
only involve vector fields that are higher in the heirarchy, and the
resulting error term is controlled by pointwise estimation of the
solution.  This yields estimates for $L \partial^\mu v$, and estimates
for $L Z^\mu$ follow, again, via a localized energy estimate.

This adaptation of the vector field method to exterior domains
originates in \cite{KSS, KSS2}.  In \cite{KSS2}, star-shapedness was
used to control the worst boundary term that resulted when applying
the scaling vector field.  The method saw particular further development
in \cite{MS3} where the idea of applying a cutoff to the scaling
vector field was first used to permit more general geometries.  The
estimate for the resulting commutator term of \cite{MS3} in
3-dimensions relied on Huygens' principle, and this method was adapted
in \cite{MS2} to generic dimension $n\ge 4$.

Here we are merely gathering the estimates of \cite{MS2, MS3},
and unless otherwise specified we refer the reader there for detailed proofs.

In particular, we are interested in solutions to 
\begin{equation} \label{perturbed wave}
	\left\{
	\begin{array}{l}
		\square_\gamma v (t,x) = G (t,x) , \quad (t,x) \in \R \times \ext, \\
		v(t,x) = 0 , \quad x \in \partial \K, \\
		v(0,x) = f(x) , \quad \partial_t v(0,x) = g(x),
	\end{array}
	\right.
\end{equation}
where
\[
	\square_\gamma = (\partial_t^2 - \Delta)  + \gamma^{\alpha \beta}(t,x) \partial_\alpha
	\partial_\beta .
\]
The perturbation terms $\gamma^{\alpha \beta}$ satisfy $\gamma^{\alpha \beta} = \gamma^{\beta \alpha}$ as well as
\begin{equation} \label{perturbation small}
	\norm{\gamma^{\alpha \beta} (t,\cdot)}{\infty} 
	\leq \dfrac{\delta}{1+t}, \quad 0 < \delta \ll 1.
\end{equation}
We shall also use the notation
\[
	\norm{\gamma^\prime(t,\cdot)}{\infty} 
	= \sum^4_{\alpha,\beta, \mu =0} \norm{\partial_\mu \gamma^{\alpha \beta}(t,\cdot)}{\infty}
\]
and the perturbation will be chosen so that
\begin{equation}\label{gamma prime small}\|\gamma'(t,\cdot)\|_{L^\infty} \le \frac{\delta}{1+t},\quad
0<\delta\ll 1.
\end{equation}


\par
We set $e_0(v)$ to be the energy form
\[
	e_0 (v) = \left| v^\prime \right|^2 + 2 \gamma^{0
          \alpha} \partial_t v \partial_\alpha v 
	- \gamma^{\alpha \beta} \partial_\alpha v \partial_\beta v.
\]
The first estimate involves the quantity
\[
	E_M(t) = E_M(v)(t) := \int_{\ext} \sum^M_{j=0} e_0(\partial_t^j v)(t,x) \; dx .
\]
Due to the fact that the vector field $\partial_t$ preserves the Dirichlet boundary conditions specified in \eqref{perturbed wave}, standard energy methods yield the following estimate.
\begin{lemma} \label{perturbed energy lemma}
Fix $M = 0 , 1 , 2 , \ldots$ and assume that the perturbation terms $\gamma^{\alpha \beta}$ are as above. Suppose also that $v \in C^\infty (\R \times \ext ) $ solves \eqref{perturbed wave} and vanishes for large $|x|$ for every $t$. Then
\begin{equation} \label{perturbed energy ineq}
\partial_t E_M^{1/2} (t) \lesssim \sum^M_{j=0} \norm{\square_\gamma \partial_t^j v(t,\cdot)}{2} + \norm{\gamma^\prime(t,\cdot)}{\infty} E_M^{1/2} (t) .
\end{equation}
\end{lemma}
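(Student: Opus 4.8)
The plan is to run the standard energy integral argument for the operator $\square_\gamma$, but applied to the time-differentiated solutions $\partial_t^j v$ for $0 \le j \le M$, and then sum. First I would note that since $\partial_t$ commutes with $\square_\gamma$ only up to terms involving $\partial_t \gamma^{\alpha\beta}$, the equation satisfied by $w_j := \partial_t^j v$ is
\[
\square_\gamma w_j = \partial_t^j G - \sum_{1 \le k \le j} \binom{j}{k} (\partial_t^k \gamma^{\alpha\beta}) \partial_\alpha \partial_\beta w_{j-k},
\]
and crucially $w_j(t,\cd)|_{\bdy} = 0$ for all $t$ because $\partial_t$ preserves the Dirichlet condition. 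This last point is what makes the argument work without any geometric hypothesis on $\K$: the boundary term in the energy identity vanishes.

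Next I would differentiate $e_0(w_j)$ in $t$, integrate over $\ext$, and integrate by parts in space. The terms where all derivatives land as $\partial^2$ on $w_j$ reconstruct $\square_\gamma w_j$ (after using symmetry $\gamma^{\alpha\beta}=\gamma^{\beta\alpha}$ and the fact that $\gamma^{0\alpha}\partial_t v \partial_\alpha v - \tfrac12\gamma^{\alpha\beta}\partial_\alpha v\partial_\beta v$ is exactly the correction designed to cancel the bad cross terms), and the boundary integral over $\bdy$ vanishes by the Dirichlet condition on $w_j$. What survives is: (i) a term $\int (\square_\gamma w_j)\, \partial_t w_j$, bounded by $\|\square_\gamma w_j\|_2 \, E_M^{1/2}(t)$ via Cauchy–Schwarz; and (ii) terms where a $\partial_t$ falls on a coefficient $\gamma^{\alpha\beta}$, each of which is pointwise bounded by $|\gamma'(t,\cd)| \, |w_j'|^2$ (after one more integration by parts on the $\gamma^{0\alpha}\partial_t\partial_t v \partial_\alpha v$ piece to avoid a second derivative), and hence bounded by $\|\gamma'(t,\cd)\|_\infty \, E_M(t)$. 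One also uses \eqref{perturbation small} to see that $E_M(t)$ is comparable to $\sum_{j\le M}\|\partial_t^j v'(t,\cd)\|_2^2$, so that $E_M^{1/2}$ is genuinely a good norm and the Cauchy–Schwarz steps are legitimate.

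Assembling these, and dividing by $2E_M^{1/2}(t)$, one obtains
\[
\partial_t E_M^{1/2}(t) \lesssim \sum_{j=0}^M \|\square_\gamma w_j(t,\cd)\|_2 + \|\gamma'(t,\cd)\|_\infty E_M^{1/2}(t) + \sum_{j=0}^M \sum_{1\le k\le j} \|\partial_t^k\gamma^{\alpha\beta}(t,\cd)\|_\infty \, \|w_{j-k}'(t,\cd)\|_2.
\]
The last sum is absorbed into $\|\gamma'(t,\cd)\|_\infty E_M^{1/2}(t)$, using that $\|\partial_t^k\gamma^{\alpha\beta}\|_\infty$ for $k\ge 1$ is controlled — here I would invoke the hypothesis that the perturbation is chosen with $\|\gamma'(t,\cd)\|_\infty$ small (and, for the higher $k$, one either assumes analogous smallness on $\partial_t^k\gamma$ as part of "as above", or absorbs via interpolation), recovering exactly \eqref{perturbed energy ineq} with $\square_\gamma\partial_t^j v$ on the right rather than the commuted quantity. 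The only genuine subtlety — the "main obstacle" — is bookkeeping the commutator $[\partial_t^j, \square_\gamma]$ so that every resulting term is either of the form $\|\square_\gamma \partial_t^j v\|_2$ or is absorbed by $\|\gamma'\|_\infty E_M^{1/2}$; there is no analytic difficulty here, since the vanishing of the boundary term is automatic and no localized energy estimate or geometric assumption on $\K$ is needed at this stage. As the excerpt says, this is the standard energy argument and one could also simply cite \cite{MS2, MS3}.
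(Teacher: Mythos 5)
Your core argument is exactly the paper's: the paper gives no detailed proof beyond observing that $\partial_t$ preserves the Dirichlet condition, so the standard energy identity for $\square_\gamma$ (multiplier $\partial_t\partial_t^j v$, energy form $e_0$) can be run on each $w_j=\partial_t^j v$ with the boundary integral over $\bdy$ vanishing, the cross terms cancelled by the $\gamma$-corrections in $e_0$, the coefficient-derivative terms bounded by $\|\gamma'(t,\cdot)\|_\infty E_M(t)$, and \eqref{perturbation small} giving $e_0(w_j)\approx |w_j'|^2$ so that $E_M^{1/2}$ is a genuine norm. The one thing you should repair is the commutator detour. The estimate \eqref{perturbed energy ineq} is deliberately stated with $\square_\gamma\partial_t^j v$ (not $\partial_t^j\square_\gamma v$) on the right, so no commutation of $\partial_t^j$ past $\square_\gamma$ is needed: the energy identity applied to $w_j$ already yields $\partial_t E_M^{1/2}\lesssim \sum_j\|\square_\gamma w_j\|_2+\|\gamma'\|_\infty E_M^{1/2}$, and your assembled inequality, which contains both $\|\square_\gamma w_j\|_2$ and the commutator sum, is internally redundant. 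More seriously, your proposed absorption of the terms $\|\partial_t^k\gamma^{\alpha\beta}\|_\infty\|w_{j-k}'\|_2$ for $k\ge 2$ appeals to smallness of higher time derivatives of $\gamma$, which is neither part of the hypotheses (\eqref{perturbation small} and \eqref{gamma prime small} control only $\gamma$ and $\gamma'$) nor recoverable by interpolation; in the paper these commutator-type contributions are dealt with only later, when $\square_\gamma\partial_t^j w_k$ is expanded via the product rule in the iteration (e.g. in the bound for term $I$), not inside this lemma. Deleting the commutator step entirely leaves a correct proof that coincides with the paper's intended one.
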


In the proof of Theorem \ref{thm1}, we shall frequently make use of the fact that
\[
e_0(v) \approx |v'|^2
\]
provided that \eqref{perturbation small} holds for $\delta$
sufficiently small.  From the class of estimates provided by Lemma
\ref{perturbed energy lemma}, one can then use elliptic regularity to control $L^2$ norms involving $\partial^\mu_{t,x} v'$. This
approached was used in \cite{KSS,KSS3} and \cite{MS3} amongst others.
See also, e.g.,
\cite{ShTs}.  Specifically, we will
make use of the following estimate, which as stated is from \cite{MS3}. 
\begin{lemma} \label{elliptic regularity lemma}
For $M,N = 0 , 1 , 2 , \ldots$ fixed and for $v \in C^\infty(\R \times
\ext)$ solving \eqref{perturbed wave} and vanishing for large $|x|$
for every $t$, it follows that
\begin{equation} \label{elliptic regularity ineq}
\begin{split}
	\sum_{|\mu| \leq N} \norm{L^M \partial^\mu v^\prime(t,\cdot)}{2} \lesssim 
	\ssum{m + j \leq N + M}{m \leq M} \bigl\| L^m \partial^j_t v^\prime(t,\cdot) \bigr\| _2 
	+ \ssum{|\mu| + m \leq N + M - 1}{m \leq M} \norm{L^m \partial^\mu \square v(t,\cdot)}{2}.
\end{split}
\end{equation}
\end{lemma}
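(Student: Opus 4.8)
The plan is to prove Lemma~\ref{elliptic regularity lemma} by an induction on $M$, the number of copies of the scaling vector field, with an inner induction on $N$ handled by elliptic regularity.  First I would treat the base case $M=0$.  Here \eqref{elliptic regularity ineq} reduces to the familiar statement that $L^2$ norms of $\partial^\mu v'$ with $|\mu|\le N$ are controlled by the norms of $\partial_t^j v'$ with $j\le N$ plus norms of $\partial^\mu\square v$ with $|\mu|\le N-1$.  This is proved by a standard elliptic regularity bootstrap: writing $\Delta v = \partial_t^2 v - \square v$, one uses the Dirichlet condition $v|_{\bdy}=0$ (and that $\partial_t^j v$ also satisfies it) together with interior and boundary elliptic estimates for the Laplacian on $\ext$ to trade each spatial derivative for either a time derivative falling on $v'$ or a derivative falling on the forcing $\square v$.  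Since $\K$ is a fixed compact obstacle with smooth boundary, the elliptic constants are harmless, and one iterates $\lceil N/2\rceil$ times.  (This is exactly the mechanism used in \cite{KSS,KSS3,MS3}, so I would cite those for the details rather than reproduce the iteration.)

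Next I would carry out the inductive step.  Assume \eqref{elliptic regularity ineq} holds with $M$ replaced by $M-1$ for all $N$, and consider $L^M\partial^\mu v'$ with $|\mu|\le N$.  The key point is that $L$ does \emph{not} preserve the Dirichlet condition because of its unbounded normal component near $\bdy$; this is precisely the obstruction the lemma is designed to route around by demanding that the left side carry $M$ copies of $L$ but the right side be allowed to shift one $L$ onto a term of the form $L^{M-1}\partial^\mu\square v$.  Concretely, I would again write $\Delta(L^M\partial^{\mu'} v) = \partial_t^2(L^M\partial^{\mu'} v) - \square(L^M\partial^{\mu'} v)$ and commute: using $[\square,L]=2\square$ one gets $\square L^M w = L^M\square w + (\text{lower-order-in-}L\text{ terms involving }\square)$, so $\square(L^M\partial^{\mu'}v)$ is a sum of terms $L^m\partial^\mu\square v$ with $m\le M$ and $|\mu|+m\le N+M-1$ — matching the second sum on the right of \eqref{elliptic regularity ineq}.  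The elliptic bootstrap then peels spatial derivatives off $L^M\partial^{\mu'}v$ at the cost of time derivatives (which land in the first sum, once we also invoke the $M-1$ case to rewrite $L^M\partial_t^j$ appropriately) and forcing terms.  The boundary contribution is where care is needed: since $Lv$ fails the Dirichlet condition, one cannot directly apply the boundary elliptic estimate to $L^M\partial^{\mu'}v$.  Here I would follow \cite{MS3}: replace $L$ by a cutoff version $\tilde L = L - \chi(x) L$ (with $\chi\in\Coi$ equal to $1$ near $\K$) which \emph{does} preserve the Dirichlet condition, prove the estimate for $\tilde L^M\partial^{\mu'}v$, and absorb the difference $L^M - \tilde L^M$, which is supported in $\{|x|\lesssim 1\}$ and involves at most $M$ vector fields, into lower-order terms using the already-established $M=0,\dots,M-1$ cases on the compact set.

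The main obstacle, as just indicated, is the boundary behavior of $L$: one must justify that applying $M$ cutoff scaling fields and then doing the elliptic bootstrap produces only the error terms appearing on the right of \eqref{elliptic regularity ineq}, with the correct bookkeeping on the indices $m\le M$ and $m+|\mu|\le N+M-1$ in the forcing sum and $m\le M$, $m+j\le N+M$ in the homogeneous sum.  Tracking this index arithmetic carefully through both the commutator $[\square,L^M]$ and the repeated elliptic iteration is the technical heart; everything else is a routine consequence of interior/boundary elliptic regularity for $\Delta$ on $\ext$ together with the commutation identities $[\square,Z]=0$ and $[\square,L]=2\square$.  Since the statement as written is quoted verbatim from \cite{MS3}, I would present the above as a sketch and refer to \cite{MS3,MS2} for the complete argument.
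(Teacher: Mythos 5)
First, note that the paper does not actually prove Lemma \ref{elliptic regularity lemma}: the surrounding text says the estimate ``as stated is from'' \cite{MS3}, and the section explicitly gathers the estimates of \cite{MS2,MS3} and refers there for proofs. So your decision to end with a citation to \cite{MS3,MS2} is consistent with what the authors do. The problem is the sketch you give in between, which has a concrete gap in the boundary step. You set $\tilde L = L - \chi(x)L = (1-\chi)L$ with $\chi = 1$ near $\K$, i.e.\ you cut off the \emph{entire} scaling field. That is not the modification used in the paper: by \eqref{modified scaling vf} only the radial part is cut off, $\tilde L = t\partial_t + \chi(x)\, r\partial_r$, and this difference matters. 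With your choice, the error $L^M - \tilde L^M = \sum_{a+b=M-1} L^a(\chi L)\tilde L^b$ consists of terms that still carry $M$ scaling-type factors (merely localized near $\K$), so they are \emph{not} covered by the induction hypothesis for $M' \le M-1$; the claim that the difference can be ``absorbed using the already-established $M=0,\dots,M-1$ cases'' is therefore unjustified. Nor can such terms be disposed of crudely on the compact set, since each surviving $L$ still contains $t\partial_t$ with the unbounded coefficient $t$, and the right-hand side of \eqref{elliptic regularity ineq} admits no growing weights in $t$ except through intact factors of $L$.

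With the paper's $\tilde L$ the step you want does work: $L - \tilde L = (1-\chi)\, r\partial_r$ is a bounded, compactly supported spatial vector field, so each term of the telescoped difference has only $M-1$ scaling fields plus one ordinary derivative, which fits the index budget $m \le M$, $m+j\le N+M$ (resp.\ $m+|\mu|\le N+M-1$) on the right of \eqref{elliptic regularity ineq}. Moreover $\tilde L^M\partial_t^j v$ vanishes on $\bdy$ (near the boundary $\tilde L = t\partial_t$ and $v$ vanishes on $\R\times\bdy$), so the Dirichlet elliptic estimate genuinely applies to it, and $[\Delta,\tilde L]$ is a second-order operator with bounded, compactly supported coefficients and no factors of $t$ (since $t\partial_t$ commutes with $\Delta$), so its contributions are again lower order in the scaling count. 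Note also that in the paper the cutoff field \eqref{modified scaling vf} is introduced for the energy estimate of Lemma \ref{modified energy}, with its commutator handled by the localized energy and boundary term estimates; you may use the same device here, but only with the radial-only cutoff --- as written, your absorption step is the gap.
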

To obtain useful estimates involving $L$, a modified version of this
operator, $\tilde{L}$, shall be introduced.  We set 
\begin{equation} \label{modified scaling vf}
\tilde{L} = t \partial_t + \chi(x) r \partial_r,
\end{equation}
 where $\chi(x) = 0$ for $x \in \K$ and $\chi(x) = 1$ for $|x| > 1$. It should be obvious that $\tilde{L}$ preserves the Dirichlet boundary conditions. However, due to the fact that $\tilde{L}$ fails to commute with $\square$, we must be able to control the commutator terms that arise in the next estimate. We fix the quantity
\[
X_{j,M}(t,x) = X_{j,M}(v)(t,x) = \int_{\ext} e_0(\tilde{L}^M \partial^j_t v)(t,x) \; dx.
\]
Using this modified energy quantity, we state the following lemma,
which is essentially from \cite{MS3}.  See, also, \cite{HM}.
\begin{lemma} \label{modified energy}
Let $v \in C^\infty (\R \times \ext)$ solve \eqref{perturbed wave}
where $\gamma^{\alpha \beta}$ are as above.  If $v(t,x)$ vanishes for large $|x|$ for each fixed $t$, then it follows that
\begin{multline} \label{modified energy ineq}
\partial_t X_{j,M}^{1/2} (t) \lesssim X_{j,M}^{1/2} (t,x) \norm{\gamma^\prime(t,\cdot)}{\infty} + \norm{\tilde{L}^M \partial^j_t \square_\gamma v (t,\cdot)}{2} + \norm{[\tilde{L}^M \partial^j_t  , \gamma^{\alpha \beta} \partial_\alpha \partial_\beta ] v (t,\cdot)}{2} \\
+ \sum_{m \leq M - 1} \norm{L^m \partial^j_t \square v (t,\cdot)}{2} + \ssum{m + |\mu| \leq M + j}{m \leq M-1} \norm{L^m \partial^\mu v^\prime(t,\cdot)}{L^2(|x| < 1)} .
\end{multline}
\end{lemma}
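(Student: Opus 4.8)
The plan is to apply the standard $\square_\gamma$ energy identity to $w:=\tilde{L}^M\partial_t^j v$ and then decompose the resulting forcing term into the four contributions on the right of \eqref{modified energy ineq}. Since both $\partial_t$ and $\tilde{L}$ preserve the Dirichlet condition, $w$ vanishes on $\R\times\bdy$, so in particular $\partial_t w(t,\cd)|_{\bdy}=0$. Multiplying $\square_\gamma w$ by $\partial_t w$, integrating over $\ext$, and integrating by parts, every boundary integral over $\bdy$ carries a factor of $\partial_t w$ and hence drops out, leaving
\[
\tfrac{d}{dt}X_{j,M}(t)=2\int_{\ext}\partial_t w\,\square_\gamma w\,dx+O\!\left(\norm{\gamma'(t,\cd)}{\infty}\int_{\ext}|w'|^2\,dx\right).
\]
Because \eqref{perturbation small} forces $e_0(w)\approx|w'|^2$, hence $X_{j,M}^{1/2}(t)\approx\norm{w'(t,\cd)}{2}$, this gives
\[
\partial_t X_{j,M}^{1/2}(t)\lesssim\norm{\square_\gamma w(t,\cd)}{2}+\norm{\gamma'(t,\cd)}{\infty}X_{j,M}^{1/2}(t),
\]
which already accounts for the first term of \eqref{modified energy ineq}.

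Next I would expand $\square_\gamma w=\tilde{L}^M\partial_t^j\square_\gamma v+[\square_\gamma,\tilde{L}^M\partial_t^j]v$; the first summand is exactly the second term of \eqref{modified energy ineq}. Writing $\square_\gamma=\square+\gamma^{\alpha\beta}\partial_\alpha\partial_\beta$ and using $[\square,\partial_t]=0$, the commutator becomes $[\square,\tilde{L}^M]\partial_t^j v-[\tilde{L}^M\partial_t^j,\gamma^{\alpha\beta}\partial_\alpha\partial_\beta]v$, and the second of these is the third term of \eqref{modified energy ineq}. Everything thus reduces to estimating $[\square,\tilde{L}^M]\partial_t^j v$ in $L^2(\ext)$.

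For this, recall $[\square,L]=2\square$ and note $\tilde{L}-L=(\chi-1)r\partial_r$ is a smooth first-order operator supported in $\{x\in\ext:|x|\le1\}$. An induction on $M$ then yields $[\square,\tilde{L}^M]=\sum_{m\le M-1}c_m\tilde{L}^m\square+P_M$, where $P_M$ is a differential operator of order at most $M+1$ with smooth coefficients supported in $\{x\in\ext:|x|\le1\}$ and involving at most $M-1$ copies of $\tilde{L}$. Applied to $\partial_t^j v$, the sum contributes $\sum_{m\le M-1}\norm{\tilde{L}^m\partial_t^j\square v}{2}$; since $\tilde{L}=L$ for $|x|>1$ while $\tilde{L}-L$ has bounded coefficients on $\{|x|\le1\}$, this is controlled by $\sum_{m\le M-1}\norm{L^m\partial_t^j\square v}{2}$ together with an error of the same form as the last term of \eqref{modified energy ineq}. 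For the $P_M$ piece one uses its $L^2(\{|x|<1\})$-boundedness, the fact that $\chi r\partial_r$ has bounded coefficients on $\{|x|\le1\}$ to trade the surviving $\tilde{L}$'s for $L$'s and plain derivatives there, and $\Delta=\partial_t^2-\square$ where convenient; a count of orders (with $v'$ absorbing one derivative) then bounds it by $\ssum{m+|\mu|\le M+j}{m\le M-1}\norm{L^m\partial^\mu v'}{L^2(|x|<1)}$. Collecting the four contributions gives \eqref{modified energy ineq}.

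I expect the delicate step to be precisely this commutator analysis near $\bdy$: one must verify that no error term retains more than $M-1$ scaling fields — which is exactly what keeps the hierarchy closed for later use with Lemma \ref{elliptic regularity lemma} — and the whole construction hinges on playing $\tilde{L}$, which preserves the Dirichlet condition so that the boundary terms in the energy identity vanish, against $L$, which commutes cleanly with $\square$, the difference between them being harmless only because it is localized near the compact obstacle. The detailed bookkeeping is carried out in \cite{MS3} (see also \cite{HM}).
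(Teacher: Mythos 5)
Your proposal is correct and follows essentially the same route as the paper, which states the lemma without proof as "essentially from" \cite{MS3} (see also \cite{HM}): an energy identity for $\square_\gamma$ applied to $\tilde{L}^M\partial_t^j v$ (boundary terms vanishing since $\tilde{L},\partial_t$ preserve the Dirichlet condition), followed by splitting off $\tilde{L}^M\partial_t^j\square_\gamma v$ and the $\gamma$-commutator, and handling $[\square,\tilde{L}^M]$ via $[\square,L]=2\square$ with the difference $\tilde{L}-L=(\chi-1)r\partial_r$ localized near $\K$, which is exactly the bookkeeping carried out in \cite{MS3}. Your order/scaling-field count ($m\le M-1$, $m+|\mu|\le M+j$) matches the cited argument.
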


To shorten notation, we have written $L^2( \{ x \in \ext : |x| < 1 \} )$
as $L^2(|x| < 1)$.


\noindent

We now state our final energy estimate which involves the full
collection of admissible vector fields: scaling, rotations and
translations. This estimate follows from the same proof as Lemma
\ref{perturbed energy lemma} except that one applies the trace theorem
to the resulting boundary terms.  We will control the terms that arise from the boundary using \eqref{local energy} and localized energy estimates, which will appear in the next section of this paper.
%
\begin{prop} \label{perturbed energy full vector fields prop}
For fixed $N,M$, set
\[
Y_{N,M} (t) = \ssum{m + |\mu| \leq N + M}{m \leq M \\ |\nu| = 1} \int_{\ext} e_0(L^m Z^\mu \partial^\nu v)(t,x) \; dx . 
\]
Suppose that \eqref{perturbation small} 
holds for $\delta$ sufficiently small.  Also suppose that $v(t,x)$ vanishes for large $|x|$ for every $t$. Then it follows that
\begin{multline} \label{perturbed energy full vector fields}
\partial_t Y_{N,M} (t) \lesssim Y^{1/2}_{N,M} (t) \ssum{m + |\mu| \leq N + M}{m \leq M \\ |\nu|  = 1} \norm{\square_\gamma L^m Z^\mu \partial^\nu v(t,\cdot)}{2} \\
+ \norm{\gamma^\prime(t,\cdot)}{\infty} Y_{N,M} (t) + \ssum{m + |\mu| \leq N + M + 2}{m \leq M} \norm{L^m \partial^\mu v^\prime(t,\cd)}{L^2(|x| < 2)}^2 .
\end{multline}
\end{prop}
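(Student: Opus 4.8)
The plan is to imitate, one vector-field string at a time, the energy argument behind Lemma~\ref{perturbed energy lemma}, this time keeping the boundary contributions. For each string $w=L^mZ^\mu\partial^\nu v$ that occurs in $Y_{N,M}$, multiply $\square_\gamma w$ by $\partial_t w$ and integrate over $\ext$. Since among $\{L,Z\}$ only the $\partial_t$'s preserve the Dirichlet condition, the surface integral over $\bdy$ no longer drops out, and the standard manipulation behind Lemma~\ref{perturbed energy lemma} (legitimate at spatial infinity because $v$, hence $w$, vanishes for large $|x|$) yields
\[
\partial_t\int_{\ext}e_0(w)(t,x)\,dx \;\lesssim\; \norm{\square_\gamma w(t,\cd)}{2}\,\norm{w'(t,\cd)}{2} + \norm{\gamma'(t,\cd)}{\infty}\,\norm{w'(t,\cd)}{2}^2 + \int_{\bdy}|w'(t,\cd)|^2\,dS,
\]
where I use that \eqref{perturbation small}, for $\delta$ small, forces $e_0(w)\approx|w'|^2$ and also makes the (constant- or $\gamma$-valued) coefficients of the spatial flux bounded, so that this flux is $\lesssim|w'|^2$ on $\bdy$.

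Next I would sum this over the finitely many strings $w$ appearing in $Y_{N,M}$. Using $e_0\approx|w'|^2$ once more together with Cauchy--Schwarz in the summation index,
\[
\sum_w\norm{\square_\gamma w}{2}\,\norm{w'}{2} \le \Big(\sum_w\norm{w'}{2}^2\Big)^{1/2}\Big(\sum_w\norm{\square_\gamma w}{2}^2\Big)^{1/2} \lesssim Y_{N,M}^{1/2}\sum_w\norm{\square_\gamma w}{2},
\]
which produces the first term on the right of \eqref{perturbed energy full vector fields}, while $\sum_w\norm{\gamma'}{\infty}\norm{w'}{2}^2\approx\norm{\gamma'}{\infty}Y_{N,M}$ produces the second.

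The genuinely substantive step is the boundary term $\sum_w\int_{\bdy}|w'|^2\,dS$, and this is the part I expect to be the main obstacle. Here I would use that $\K\subset\{|x|<1\}$, so that on $\{|x|<2\}\cap\ext$ the rotations $\Omega_{ij}$ and the field $r\partial_r$ have smooth bounded coefficients. Commuting the outer $\partial$ in $w'$ inward past $L^m$ and past $Z^\mu$ (using $[L,Z]=0$, the fact that $[\partial_\gamma,L]$ and $[\partial_\gamma,Z]$ are first order, and leaving the $L$'s unexpanded because of their unbounded factor $t$) and then trading the $Z$'s for plain derivatives near $\bdy$, one gets the pointwise bound $|w'|\lesssim\sum_{m'+|\kappa|\le N+M+1,\,m'\le M}|L^{m'}\partial^\kappa v'|$ on $\{|x|<2\}\cap\ext$; crucially, every term so produced still carries at least one plain derivative, which is what lets it be written in terms of $v'$. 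Applying the multiplicative trace inequality $\norm{\beta\phi}{L^2(\bdy)}^2\lesssim\norm{\beta\phi}{H^1(\ext)}^2$ with a cutoff $\beta\in\Coi(\{|x|<2\})$ equal to $1$ near $\bdy$ to each $\phi=L^{m'}\partial^\kappa v'$, and commuting $\nabla$ past $L^{m'}$ one more time, gives
\[
\int_{\bdy}|w'|^2\,dS \;\lesssim\; \ssum{m+|\mu|\le N+M+2}{m\le M}\norm{L^m\partial^\mu v'(t,\cd)}{L^2(|x|<2)}^2,
\]
and summing over the finitely many $w$ leaves this unchanged. Combining the three contributions gives \eqref{perturbed energy full vector fields}. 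The delicate point, as indicated, is the bookkeeping in this last step: one must check that passing to the boundary costs at most two extra derivatives while keeping the restriction $m\le M$ on the number of scaling fields (this is precisely what makes the index set $\{m+|\mu|\le N+M+2,\ m\le M\}$ on the right-hand side the correct one) and that no resulting term is stripped of a plain derivative to absorb into $v'$. Everything else is the energy calculation of Lemma~\ref{perturbed energy lemma}.
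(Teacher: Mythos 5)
Your argument is exactly the paper's: the proposition is proved there by repeating the energy computation of Lemma \ref{perturbed energy lemma} for each string $L^mZ^\mu\partial^\nu v$, keeping the boundary flux (which no longer vanishes since $Z$ and $L$ do not preserve the Dirichlet condition), and controlling it via the trace theorem after trading $Z$ for plain derivatives near $\bdy$, with the two-derivative loss landing in the $L^2(|x|<2)$ term — the paper simply defers the details to \cite{MS3}. Your bookkeeping of the indices ($m\le M$, total $\le N+M+2$) and the role of the innermost $\partial^\nu$ is consistent with the stated estimate, so the proposal is correct and essentially identical in approach.
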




Just as in \cite{HM}, we note that this estimate slightly deviates 
from the versions appearing in earlier papers \cite{MS3,MS2}. The important difference is 
that in \cite{MS3,MS2}, one did not need to distinguish between $Z$ and a derivative 
$\partial$ in the definition of $Y_{N,M}$. Despite this subtle difference, 
the proof of the above proposition is identical to the proof presented in \cite{MS3}.

\subsubsection{Localized energy estimates and boundary term estimates on $\R\times\ext$}
This section concerns solutions to the Dirichlet-wave equation
\begin{equation} \label{boundary term wave}
	\left\{
	\begin{array}{l}
		\square v (t,x) = G (t,x) , \quad (t,x) \in \R \times \ext, \\
		v(t,x) = 0 , \quad x \in \partial \K, \\
		v(t,x) = 0 , \quad t \leq 0.
	\end{array}
	\right.
\end{equation}
Here we seek to provide estimates to handle the boundary terms that
arise in Lemma \ref{modified energy} and Proposition \ref{perturbed
  energy full vector
  fields prop}.

The first is a variant of the localized energy estimate
\eqref{energy} that holds in our exterior domains.
%
\begin{prop}
Suppose that $\K \subset \{ x \in \ext : |x| < 1 \}$ satisfies  \eqref{local energy} and suppose that $v \in C^\infty (\R \times \ext)$ solves \eqref{boundary term wave}. Then for any fixed $N$ and $0 \leq M \leq 1$, if $v(t,x)$ vanishes for large $|x|$ for every $t$, then
\begin{multline} \label{localized energy local control}
	\ssum{|\mu| + m \leq N + M}{m \leq M} \norm{L^m \partial^\mu v^\prime}{L^2_{t,x}([0,T] \times \{ x \in \ext : |x| < 5 \})} 
	\lesssim \int^T_0 \ssum{|\mu| + m \leq N + M + D}{m \leq M}\norm{\square L^m \partial^\mu v(s,\cdot)}{2} \; ds \\
	+ \ssum{|\mu| + m \leq N + M - 1}{m \leq M} \norm{\square L^m \partial^\mu v}{L^2_{t,x}([0,T] \times \ext)} .
\end{multline}
\end{prop}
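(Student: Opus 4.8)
The plan is to run the argument of \cite{MS3,MS2} (see also \cite{HM}): first cut down the number of vector fields that must be tracked directly, then split the solution into a Euclidean (``free'') piece, estimated by the boundaryless inequality \eqref{energy}, and a scattered piece, estimated via the hypothesis \eqref{local energy}.

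\emph{Reductions.} By the interior version of the elliptic regularity underlying \eqref{elliptic regularity ineq} --- writing $\Delta(L^m \partial^\mu v) = \partial_t^2(L^m\partial^\mu v) - \square(L^m\partial^\mu v)$ and applying interior elliptic estimates on $\{ |x| < 6 \}$ together with $v|_{\bdy} = 0$ --- it suffices to bound $\norm{L^m \partial_t^j v'}{L^2_{t,x}([0,T]\times\{ |x| < 6 \})}$ for $m \le M \le 1$ and $m + j \le N + M$; this step generates forcing terms carrying one fewer derivative and measured in $L^2_{t,x}([0,T]\times\ext)$, which is precisely the second term on the right of \eqref{localized energy local control}. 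Next, with $\tilde L$ and $\chi$ as in \eqref{modified scaling vf}, write $L = \tilde L + (1-\chi) r\partial_r$; since $(1-\chi)r\partial_r$ is a smooth vector field supported in $\{ |x| \le 1 \}$, it can be traded for $\tilde L$ at the cost of localized lower-order terms involving strictly fewer scaling fields, which are disposed of by an induction on $M$ and, within each $M$, on $N$. As $\partial_t$ and $\tilde L$ preserve the Dirichlet condition while $[\square, \tilde L]$ equals $2\chi\square$ plus terms supported in $\{ |x| \le 1 \}$, the function $w := \tilde L^m \partial_t^j v$ solves $\square w = F$ with $w \equiv 0$ for $t \le 0$ and $w|_{\bdy} = 0$, where $F$ is a combination of the quantities $\square L^k\partial^\alpha v$ ($k \le m$, $k + |\alpha| \le N + M$) together with lower-order localized error terms already controlled. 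Everything is thus reduced to one localized energy estimate for such a $w$.

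\emph{The core estimate.} Extend $F$ by zero across $\K$ and write $w = w_0 + w_1$, where $\square w_0 = F$ on $\R\times\R^4$ with vanishing Cauchy data, and $w_1 = w - w_0$ solves $\square w_1 = 0$ on $\R\times\ext$ with $w_1 \equiv 0$ for $t \le 0$ and $w_1|_{\bdy} = -w_0|_{\bdy}$. For $w_0$, putting all of $F$ in the inhomogeneous slot of \eqref{energy} (equivalently Lemma \ref{main energy}) and using $\langle x\rangle^{-3/4} \gtrsim 1$ on $\{ |x| < 6 \}$ gives $\norm{w_0'}{L^2_{t,x}([0,T]\times\{ |x| < 6 \})} \lesssim \int_0^T \norm{F(s,\cdot)}{2}\,ds$. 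For $w_1$, the difficulty is that \eqref{local energy} applies only with zero data at $\bdy$; to get around this, pick $\psi \equiv 1$ near $\K$ with $\supp\psi \subset \{ |x| < 2 \}$ and set $w_2 := w_1 + \psi w_0$, so that $w_2|_{\bdy} = 0$, $w_2 \equiv 0$ for $t \le 0$, and $\square w_2 = \psi F + [\square,\psi] w_0$ is supported in $\{ |x| < 2 \}$. Representing $w_2$ via Duhamel's formula as a superposition over $s$ of homogeneous Dirichlet--wave evolutions with data $(0, (\square w_2)(s,\cdot))$ supported in $\{ |x| < 10 \}$, the hypothesis \eqref{local energy} yields
\[
\norm{w_2'(t,\cdot)}{L^2(\{ x \in \ext : |x| < 10 \})} \lesssim \int_0^t \langle t - s\rangle^{-2-\sigma} \sum_{|\alpha| \le D} \norm{\partial^\alpha (\square w_2)(s,\cdot)}{2}\,ds,
\]
and, because $\sigma > 0$ makes $\langle\cdot\rangle^{-2-\sigma}$ square-integrable in $t$, Young's inequality turns this into an $L^2_{t,x}$ bound whose right side involves $\partial^\alpha(\square w_2)$, $|\alpha| \le D$, on $\{ |x| < 2 \}$: derivatives of $F$, which enter the first term of \eqref{localized energy local control}, and derivatives of $w_0$, which at this point carry no further boundary interaction and are controlled by \eqref{energy} and, at the lowest order, by the $L^2$ bounds of this section. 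Since $w_1' = w_2' - (\psi w_0)'$ with $\psi w_0$ explicit in $w_0$, assembling the pieces gives the asserted estimate.

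\emph{The main obstacle.} The crux is this last point: \eqref{local energy} is available only for the Dirichlet problem with zero boundary data and compactly supported Cauchy data, so the scattered part $w_1$ --- whose boundary trace is nonzero --- cannot be treated directly. The extension to $w_2$ circumvents this, but one must then keep precise track of the derivatives it produces, so that they land in the $D$-derivative slot of \eqref{localized energy local control}, and of the non-gradient pieces of $w_0$ that the extension introduces, which must be absorbed using the boundaryless $L^2$ estimates of this section (notably those built on Lemma \ref{DZ sobolev estimate}). Each of these steps is carried out as in \cite{MS3,MS2,HM}.
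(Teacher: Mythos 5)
Your overall architecture—reducing by elliptic regularity to $L^m\partial_t^j v'$, splitting off a boundaryless solution estimated by \eqref{energy}/\eqref{main energy ineq}, and correcting its boundary trace with a cutoff so that the remaining piece solves a Dirichlet-wave equation with forcing supported near $\K$, to which Duhamel, \eqref{local energy} and Young's inequality are applied—is the same as the paper's two-case cutoff argument. The genuine gap is in your treatment of the scaling field. You trade $L$ for $\tilde{L}$ and assert that the resulting commutator terms are ``lower-order localized error terms already controlled.'' But $[\square,\tilde{L}]\partial_t^j v$ contains a second-order spatial operator $A$ with coefficients supported in $\{|x|\le 1\}$ acting on $\partial_t^j v$; it is not of the form $\square L^k\partial^\alpha v$, and it sits inside the forcing of your $w_2$, which must pass through \eqref{local energy} with its loss of $D$ derivatives. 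After Young's inequality this demands control of $\norm{\partial^{\alpha}(A\partial_t^j v)}{L^2_{t,x}([0,T]\times\{|x|<2\})}$ for $|\alpha|\le D$, i.e. localized $L^2_{t,x}$ bounds on roughly $N+M+D$ derivatives of $v'$. That is precisely the quantity the proposition controls, but at order $N+M+D$ rather than $N+M$; invoking the already-proved $M=0$ case at that order puts $N+M+2D$ derivatives of $\square v$ on the right, exceeding the stated $N+M+D$ whenever $D\ge 1$—and permitting $D\neq 0$ is the entire point of hypothesis \eqref{local energy}. (Routing these terms through the $L^1_t$ slot instead is worse: no $L^1_t$ bound on localized derivatives of $v$ is available without a $T^{1/2}$ loss.) So as written the step is circular, or proves the estimate only with an extra $D$-fold loss.

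The paper avoids this by never commuting a modified scaling field through the equation here: following \cite[Lemma 2.8]{MS3} (and \cite[Lemma 5.2]{MS2}), $L$ itself is handled inside the Duhamel representation, using that $r\lesssim 1$ on the localized region and $t\lesssim \left< t-s\right>\left< s\right>$, so that $s\partial_s$ falling on the compactly supported forcing is rewritten via $L$ and $r\partial_r$; the forcing then always enters as $\square L^m\partial^\mu v$, and the only price is the weakened decay $\left< t-s\right>^{-2-\sigma+m}$ in \eqref{localized energy local control ineq1}, still integrable since $m\le M\le 1$ and $\sigma>0$, after which Young's inequality is applied. You should rework your scaling-field step along these lines (the $\tilde{L}$ device belongs to the energy estimates of Lemma \ref{modified energy}, not to this proposition). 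A secondary caution: the non-gradient localized pieces of $w_0$ generated by $[\square,\psi]w_0$ cannot be absorbed by the bounds built on Lemma \ref{DZ sobolev estimate} without importing either a $\log(2+T)^{1/2}$ factor or forcing norms that do not appear in \eqref{localized energy local control}; the paper instead disposes of the corresponding terms by Sobolev embedding together with \eqref{main energy ineq}.
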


\begin{proof}
Using cutoffs, we split our analysis into two cases: (1) $\square
v(t,x) = 0$ when $|x| < 10$, and (2) $ \square v (t,x) = 0$ when $|x|
> 6$. 

In the former case, we apply elliptic regularity \eqref{elliptic regularity ineq} and local energy decay \eqref{local energy} to see that
\begin{multline} \label{localized energy local control ineq1}
	\ssum{|\mu| + m \leq N + M}{m \leq M} \norm{L^m \partial^\mu v^\prime(t,\cdot)}{L^2( \{ x \in \ext : |x| < 5 \})}^2\\ 
	\lesssim \left( \int^t_0 \ssum{|\mu| + m \leq N + M + D}{m \leq M} \left< t - s \right>^{-2-\sigma + m} \norm{\square L^m \partial^\mu v(s,\cdot)}{L^2(\{ x \in \ext : |x| < 10 \})} \; ds \right)^2 \\
	+ \ssum{|\mu| + m \leq N + M - 1}{m \leq M} \norm{\square L^m \partial^\mu v(t,\cdot)}{L^2(\{ x \in \ext : |x| < 10 \})}^2 .
\end{multline}
See \cite[Lemma 2.8]{MS3} for more details.
Applying Young's inequality to the first term in the right hand side
of \eqref{localized energy local control ineq1} establishes
\eqref{localized energy local control} for case (1). 

To handle case (2), fix a cutoff $\rho \in C^\infty (\R^4)$ where $\rho(x) = 1$ when $|x| < 5$ and $\rho(x) = 0$ when $|x| > 6$. Let $w = \rho v_0 + v_r$ where $v_0$ solves the boundaryless wave equation $\square v_0  = \square v $ with vanishing Cauchy data.  We see that $w$ solves 
\[ \left\{ \begin{array}{l}
\square w (t,x) = -2 \nabla_x \rho(x) \cdot \nabla_x v_0 (t,x) -
(\Delta \rho(x)) v_0(t,x) , \quad (t,x) \in [0,T] \times \ext , \\ 
w(t,x)=0,\quad x\in \bdy,\\
w(0,x) = 0 , \quad t \leq 0.
\end{array}
\right.
\]
Note that $v(t,x) = w(t,x)$ for $|x| < 5$. 
Applying \eqref{localized energy local control ineq1} to $w$, integrating
both sides over $[0,T]$, and using Young's inequality, we see that

\begin{multline} \label{localized energy local control ineq 3}
\ssum{|\mu| + m \leq N + M}{m \leq M} \norm{L^m \partial^\mu w^\prime}{L^2( [0,T] \times \{ x \in \ext : |x| < 5 \})} \\
 \lesssim \ssum{|\mu| + m \leq N + M +D}{m \leq M} \norm{ L^m \partial^\mu u_0}{L^2([0,T ] \times \{ x \in \ext : |x| < 10 \})} \\
+ \ssum{|\mu| + m \leq N + M +D}{m \leq M} \norm{ L^m \partial^\mu u_0^\prime}{L^2([0,T ] \times \{ x \in \ext : |x| < 10 \})} .
\end{multline}
Applying Sobolev embedding and \eqref{main energy ineq} to right hand side of \eqref{localized energy local control ineq 3} finishes the proof.
\end{proof}

One of the major innovations of \cite{MS3} was an estimate that could
control the boundary term that results from the
commutator of $\Box$ with $\tilde{L}$.  Such was also used in the
three dimensional analog of the current study \cite{HM}.  This
original boundary term estimate was proved using Huygens' principle
for the free wave equation.  Here we instead use the analogous result
of \cite{MS2} for higher dimensions where the fundamental solution is
instead estimated.  We omit the proof as it is a straightforward
adaptation of \cite[Lemma 5.2]{MS2}, where the adaptation is in the
spirit of that from preceding proof.

\begin{prop} \label{boundary term prop}
Suppose that $v \in C^\infty (\R \times \ext )$ solves \eqref{boundary term wave} and vanishes for each $x \in \partial \K$. Also suppose that $\K \subset \{ |x| < 1 \}$ satisfies \eqref{local energy} and that $v(t,x)$ vanishes for large $|x|$ for every $t$. Then it follows that if $N \geq 0$ and $0 \leq M \leq 1$ are fixed, then we have the estimate
\begin{multline} \label{boundary term ineq}
\int^t_0 \ssum{|\mu| + m \leq N + M}{m \leq M} \norm{L^m \partial^\mu v^\prime(s,\cdot)}{L^2 (|x| < 2)} \; ds \lesssim \int^t_0 \ssum{|\mu| + m \leq N + M + D }{m \leq M} \norm{L^m \partial^\mu \square v (s,\cdot)}{2} \; ds \\
+ \int^t_0 \int_{\ext} \ssum{|\mu| + m \leq N + M + D+4}{m \leq M} \left| L^m Z^\mu \square v(s,y) \right| \; \dfrac{dy \; ds}{|y|^{3/2}} .
\end{multline} 
\end{prop}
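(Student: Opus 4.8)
The plan is to mimic the structure of the preceding proposition's proof, splitting via cutoffs into a near-obstacle piece governed by the local energy decay hypothesis \eqref{local energy} and a region where the forcing is supported away from $\K$, in which case one reduces to a boundaryless estimate. First I would fix a cutoff $\rho\in C^\infty(\R^4)$ with $\rho\equiv 1$ on $\{|x|<10\}$ and $\rho\equiv 0$ on $\{|x|>12\}$, and decompose $\square v = \rho\,\square v + (1-\rho)\,\square v =: G_1 + G_2$, so that $v = v_1 + v_2$ with $v_i$ solving \eqref{boundary term wave} with forcing $G_i$. For $v_1$, the forcing is supported in $\{|x|<12\}$, so after commuting $L^m\partial^\mu$ through $\square$ (picking up only harmless lower-order terms since $[\square,L]=2\square$ and $[\square,\partial]=0$), the left side of \eqref{boundary term ineq} for $v_1$ is controlled exactly as in \eqref{localized energy local control}: apply elliptic regularity \eqref{elliptic regularity ineq}, then local energy decay \eqref{local energy} with its $D$-derivative loss, then Young's inequality on the resulting $\langle t-s\rangle^{-2-\sigma+m}$ convolution (using $m\le M\le 1$ and $\sigma>0$ so the kernel is integrable). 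This produces the first term on the right of \eqref{boundary term ineq} and a contribution bounded by the $L^2_{t,x}$ norm of $\square L^m\partial^\mu v$ over $\{|x|<12\}$, which in turn is dominated by the last (integral) term on the right via crude estimation since $|y|\lesssim 1$ there.

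For $v_2$, whose forcing $G_2$ is supported in $\{|x|>10\}$, I would write $v_2 = \rho_0 v_{2,0} + v_{2,r}$ where $v_{2,0}$ solves the \emph{boundaryless} wave equation $\square v_{2,0} = G_2$ with zero Cauchy data, $\rho_0$ is a cutoff equal to $1$ near $\K$, and $v_{2,r}$ is the remainder solving a Dirichlet problem whose forcing is the commutator $[\square,\rho_0]v_{2,0}$, again supported in a fixed compact annulus. The remainder $v_{2,r}$ is then handled by the case-(1) argument just as above. For the main piece $\rho_0 v_{2,0}$ — and since on $\{|x|<2\}$ we have $v_2 = v_{2,r} + v_{2,0}$ up to the cutoff — one is reduced to bounding $\int_0^t \sum \|L^m\partial^\mu v_{2,0}'(s,\cdot)\|_{L^2(|x|<2)}\,ds$ for the free solution $v_{2,0}$ with forcing supported away from the origin. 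This is precisely where the analogue of \cite[Lemma 5.2]{MS2} enters: the fundamental solution of $\square$ in $1+4$ dimensions, localized to $|x|<2$ and integrated against a source at distance $|y|\gtrsim 10$, obeys a pointwise kernel bound that, after commuting the vector fields $L^m Z^\mu$ through (using $[\square,L]=2\square$, $[\square,Z]=0$, and that $Z$ spans $\partial$ together with rotations away from the origin), yields exactly the weighted integral $\int_0^t\int \sum_{|\mu|+m\le N+M+D+4,\,m\le M}|L^m Z^\mu\square v(s,y)|\,|y|^{-3/2}\,dy\,ds$. The extra four derivatives and the passage from $\partial^\mu$ to $Z^\mu$ on the right come from converting the localized $L^2_x$ norm on the left into a sup via Sobolev embedding on annuli and from estimating the convolution kernel, exactly as in the cited lemma.

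The hard part — really the only nonroutine part — is the free-solution estimate underlying the $\rho_0 v_{2,0}$ piece, i.e. the $1+4$-dimensional replacement for Huygens' principle that was available in the $3$-dimensional work \cite{MS3,HM}. In odd dimensions the sharp Huygens property makes the analogous bound almost immediate; in even dimensions $n=4$ the forward fundamental solution has a tail, so one must instead use the explicit (or carefully estimated) representation of the solution operator and track the decay of the kernel in the region $|x|<2$, $|y|$ large, $s<t$. The excerpt explicitly tells us to quote \cite[Lemma 5.2]{MS2} for this, so in the write-up I would simply invoke that lemma after the cutoff reductions above, noting that the adaptation (commuting in $L^mZ^\mu$, converting $L^2(|x|<2)$ norms to pointwise bounds via Sobolev embedding, and absorbing the $D$-derivative loss from \eqref{local energy} on the remainder pieces) is carried out exactly as in the proof of the preceding proposition. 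Everything else is bookkeeping: commuting vector fields through $\square$, keeping track of the derivative counts $N+M+D$ versus $N+M+D+4$, and several applications of Young's inequality and \eqref{main energy ineq}.
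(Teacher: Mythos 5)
Your plan follows exactly the route the paper has in mind: the paper omits the proof, deferring to \cite[Lemma 5.2]{MS2} with an adaptation "in the spirit of" the preceding proposition, and your splitting of the forcing into a near-obstacle piece (handled by elliptic regularity, the local energy decay \eqref{local energy} with its $D$-derivative loss, and Young's inequality) and a far piece (handled by the free fundamental-solution kernel bound, with Sobolev embedding on annuli producing the passage to $Z^\mu$, the four extra derivatives, and the $|y|^{-3/2}$ weight) is the correct skeleton. Two steps, however, would not survive as written. First, your claim that the leftover near-piece contribution, which you record as an $L^2_{t,x}$ norm of $\square L^m\partial^\mu v$ on $\{|x|<12\}$, is "dominated by the last (integral) term on the right via crude estimation since $|y|\lesssim 1$": an $L^2$-in-time quantity is never controlled by a weighted $L^1$-in-time-and-space quantity, however small the spatial region. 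The correct bookkeeping is different and simpler: since the left side of \eqref{boundary term ineq} is an $L^1_tL^2_x$ norm rather than the $L^2_{t,x}$ norm of \eqref{localized energy local control}, one runs the pointwise-in-time estimate \eqref{localized energy local control ineq1} without squaring and integrates in $t$; the Duhamel term is handled by Young's inequality ($L^1\ast L^1\to L^1$), consistent with what you describe, while the lower-order elliptic-regularity term carries at most $N+M-1\le N+M+D$ derivatives and so its time integral is absorbed directly into the \emph{first} right-hand term of \eqref{boundary term ineq}; no appeal to the weighted integral is needed or possible there. (Quoting \eqref{localized energy local control} itself and using Cauchy--Schwarz in time would instead cost an unacceptable factor of $t^{1/2}$.)

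Second, in the far-piece decomposition your cutoff sits on the wrong side: with $\rho_0\equiv 1$ near $\K$, the remainder $v_{2,r}=v_2-\rho_0 v_{2,0}$ has boundary trace $-v_{2,0}\bigl|_{\bdy}\neq 0$, so it is not a solution of the Dirichlet problem and the case-(1) machinery does not apply to it. One should instead subtract the free solution cut off \emph{away} from the obstacle, e.g. set $v_{2,r}=v_2-(1-\beta)v_{2,0}$ with $\beta\equiv 1$ on a large ball containing $\K$ and $\beta$ supported in $\{|x|<9\}$; then $v_{2,r}=v_2$ near the obstacle, satisfies the Dirichlet condition, and solves \eqref{boundary term wave} with forcing $[\square,\beta]v_{2,0}$ supported in a fixed annulus. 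Feeding that commutator forcing into the near-piece argument, and estimating the free solution $v_{2,0}$ on the annulus by the kernel bound of \cite[Lemma 5.2]{MS2} together with Sobolev embedding on annuli, produces the $|y|^{-3/2}$-weighted term. With these two corrections your write-up coincides with the intended proof.
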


\newsection{Pointwise Estimates}
We now state the main pointwise estimates that will be used to prove Theorem \ref{thm1}.
We first need a version of the Sobolev embedding theorem for annuli. See \cite{Klainerman2} for more details.
\begin{lemma} \label{sobolev on annuli}
Suppose that $h \in C^\infty(\R^n)$. Then it follows that for $R \geq 1$,
\begin{equation} \label{sobolev annuli 1}
\norm{h}{L^\infty(R/2 < |x| < R)} \lesssim R^{-(n-1)/2} \sum_{|\mu| \leq \frac{n}{2} + 1} \norm{Z^\mu h}{L^2(R/4 < |x| < 2R)} .
\end{equation}
\end{lemma}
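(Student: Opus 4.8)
The plan is to reduce the estimate on the annulus $\{R/2 < |x| < R\}$ to the standard Sobolev embedding on a fixed region by rescaling. First I would introduce the dilation $x = Ry$, so that the annulus $\{R/2 < |x| < R\}$ becomes the fixed annulus $\{1/2 < |y| < 1\}$, and the slightly larger annulus $\{R/4 < |x| < 2R\}$ becomes $\{1/4 < |y| < 2\}$. Set $\tilde h(y) = h(Ry)$. On the fixed annular region $\{1/2 < |y| < 1\}$, ordinary Sobolev embedding $H^s \hookrightarrow L^\infty$ for $s > n/2$ (here $s = \lfloor n/2\rfloor + 1$ suffices) gives
\[
\norm{\tilde h}{L^\infty(1/2 < |y| < 1)} \lesssim \sum_{|\alpha| \leq \frac{n}{2}+1} \norm{\partial_y^\alpha \tilde h}{L^2(1/4 < |y| < 2)}.
\]
The key point is that on this fixed annulus, which is bounded away from the origin, the rotation vector fields $\Omega_{ij}$ together with the radial derivative $\partial_r$ span the full tangent space uniformly, and moreover $\partial_r$ itself can be traded for the $\Omega_{ij}$ at the cost of controlling fewer derivatives — more precisely, on an annulus away from the origin one has the elementary fact that $\norm{\partial_y^\alpha \tilde h}{L^2(1/4 < |y| < 2)} \lesssim \sum_{|\mu| \leq |\alpha|} \norm{\Omega^\mu \partial_r^{|\alpha|-|\mu|} \tilde h}{\ldots}$, but in fact the cleaner route is to observe directly that $\sum_{|\alpha|\le k}\norm{\partial^\alpha_y \tilde h}{L^2(1/4<|y|<2)} \approx \sum_{|\mu|\le k}\norm{\Omega^\mu\tilde h}{L^2(1/4<|y|<2)}$ fails in general (radial derivatives are missing), so instead I would simply use that $\partial_y$ on the annulus is a bounded combination of $\Omega_{ij}$ and $y\cdot\nabla_y$, and note $y \cdot \nabla_y$ is scale-invariant.

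Here is where the scaling is genuinely useful: the vector field $y \cdot \nabla_y = r\partial_r$ in the $y$ variable equals $r\partial_r$ in the $x$ variable (the Euler field is dilation-invariant), and each rotation $\Omega_{ij}$ likewise commutes with the dilation $x \mapsto Ry$. Consequently, for any word $W$ in $\{\Omega_{ij}, r\partial_r\}$ we have $(W\tilde h)(y) = (Wh)(Ry)$, with no powers of $R$ appearing. Changing variables back, $\norm{W\tilde h}{L^2(1/4<|y|<2)}^2 = R^{-n}\norm{Wh}{L^2(R/4<|x|<2R)}^2$. Combining with the Sobolev inequality above and the fact that $\norm{\tilde h}{L^\infty(1/2<|y|<1)} = \norm{h}{L^\infty(R/2<|x|<R)}$, and absorbing $r\partial_r = \sum_j x_j\partial_j$ — note that on the annulus $|x|\approx R$ one has $|x_j\partial_j h| \lesssim R|\partial h|$, but $r\partial_r$ is already among our vector fields' span when combined with $\Omega_{ij}$ to recover all of $\partial$... — I would instead phrase the final bound purely in terms of $Z^\mu = $ words in $\{\partial_\alpha, \Omega_{ij}\}$ by noting that on $\{|x|\approx R\}$, $R\partial_\alpha$ is comparable to a bounded combination of $\Omega_{ij}$ and $r\partial_r$, and $R \geq 1$ only helps. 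This yields $\norm{h}{L^\infty(R/2<|x|<R)} \lesssim R^{-n/2}\cdot R^{1/2}\sum_{|\mu|\le n/2+1}\norm{Z^\mu h}{L^2(R/4<|x|<2R)}$ after tracking that each $\partial$ in a $Z^\mu$ word contributes a factor $R^{-1}$ relative to $\Omega$ or $r\partial_r$; the worst case is all derivatives, giving the stated $R^{-(n-1)/2}$.

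The main obstacle — really a bookkeeping point rather than a deep one — is correctly tracking the powers of $R$ and confirming that the loss is exactly $R^{-(n-1)/2}$ and not worse: the gain of $R^{-n/2}$ comes from the $L^2 \to L^\infty$ volume factor under rescaling on an $n$-dimensional region, while the one lost power of $R^{1/2}$ is unavoidable because $Z$-derivatives that are plain translations $\partial_\alpha$ are weaker than the scaling $r\partial_r$ on the annulus by a factor $R$; so in the extreme case where all $\le n/2+1$ vector fields in $Z^\mu$ are translations one would naively get $R^{-n/2}\cdot R^{n/2+1}$, which is far too lossy, and the resolution is that Sobolev embedding only needs derivatives tangent-plus-one-transverse, so effectively only a single radial derivative's worth of $R$ is spent — equivalently, one proves the $n$-dimensional Sobolev inequality on the annulus using the rotation fields $\Omega_{ij}$ (which cost no $R$) for the $n-1$ tangential directions and one honest $\partial_r$ (costing $R^{-1}$, which after dividing through by the $R^{-n/2}$ gives $R^{-(n-1)/2}$) for the radial direction. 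I would therefore organize the proof around the decomposition "$(n-1)$ rotations $+$ one radial derivative," which is exactly the content of Klainerman's annulus Sobolev lemma, and cite \cite{Klainerman2} for the routine details of the fixed-annulus embedding.
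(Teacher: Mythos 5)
Your organizing idea --- rotations for the spherical directions plus a single radial derivative, with the gain coming from a volume factor --- is the right one and is essentially the content of the paper's (one-line) proof, but the $R$-bookkeeping in your rescaling argument does not close. Under $x=Ry$ the rescaled radial derivative satisfies $\partial_{|y|}\tilde h(y)=R\,(\partial_r h)(Ry)$, so converting it back to the fields in $Z$ costs a factor $R^{+1}$, not the $R^{-1}$ you assert: equivalently, $r\partial_r=\sum_j x_j\partial_j$ is bounded by $R\sum_j|\partial_j\cdot|$ on the annulus, while the identity $\partial_j=\frac{1}{r^2}\sum_k x_k\Omega_{kj}+\frac{x_j}{r^2}\,r\partial_r$ controls $\partial$ by $R^{-1}$ times the scale-invariant fields --- which is the direction you do not need, since the right-hand side of \eqref{sobolev annuli 1} is phrased in $Z=\{\partial,\Omega\}$. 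With the additive fixed-annulus Sobolev inequality, your rescaling therefore yields the factor $R^{1-n/2}=R^{-(n-2)/2}$ on the term carrying the radial derivative, which misses the stated $R^{-(n-1)/2}$ by $R^{1/2}$; the step "$R^{-n/2}\cdot R^{1/2}$" is asserted rather than derived. The loss is genuine, not notational: for $h(x)=\psi(|x|-3R/4)$ with $\psi$ a unit bump, $\tilde h$ concentrates at radial scale $1/R$, with $\|\tilde h\|_{L^2}\approx R^{-1/2}$ and $\|\partial_{|y|}\tilde h\|_{L^2}\approx R^{1/2}$, so the additive estimate on the fixed annulus only gives $\|\tilde h\|_{L^\infty}\lesssim R^{1/2}$ although $\|\tilde h\|_{L^\infty}=1$; this same $h$ saturates \eqref{sobolev annuli 1}, so any correct proof must recover that half power.

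There are two ways to repair this. The paper's route avoids rescaling altogether: localize to the annulus with a cutoff, pass to polar coordinates, and apply Sobolev embedding on $\R_+\times\S^{n-1}$ with the flat measure $dr\,d\omega$ (one-dimensional Sobolev in $r$ on an interval of length $\approx R\geq 1$, sphere Sobolev via the $\Omega_{ij}$, and $|\partial_r h|\lesssim|\nabla_x h|$); since $dx=r^{n-1}dr\,d\omega\approx R^{n-1}dr\,d\omega$ on the annulus, every term on the right acquires the same factor $R^{-(n-1)/2}$ and no derivative-dependent powers of $R$ ever appear. Alternatively, if you want to keep the dilation, replace the additive radial Sobolev bound by its multiplicative form $\|f\|_{L^\infty}^2\lesssim\|f\|_{L^2}\|f'\|_{L^2}+\|f\|_{L^2}^2$: the mixed term then converts as $\bigl(R^{-n/2}\cdot R^{1-n/2}\bigr)^{1/2}=R^{-(n-1)/2}$, exactly the claimed power, and the zeroth-order term is even better since $R\geq1$. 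Either fix yields the lemma; as written, your argument does not.
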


After localizing to an annulus using a cutoff, these estimates follow from applying Sobolev embedding on $\R_+ \times \S^{n-1}$ and the fact that the volume element in $\R^n$ in polar coordinates is $r^{n-1} dr d\omega$.

We will also need estimates similar to those originally from \cite{KlainermanSideris}. Similar estimates have also appeared in \cite{Klainerman}, 
\cite{Sideris2}, \cite{SiderisThomases}, \cite{SiderisTu}, \cite{H3}
and \cite{HY}. Most versions of these estimates involve controlling
$u'$ whereas our current estimate uses Lemma \ref{DZ sobolev estimate}
to obtain a pointwise dispersive estimate for the solution $u$
itself. 

Our main pointwise estimate will be a combination of the estimates of
\cite{KlainermanSideris} and \cite{DMSZ}.  We begin with an estimate
that is essentially from \cite{H3} and is strongly rooted in the
preceding results of \cite{KlainermanSideris} and \cite{Sideris2}.
As opposed to what appeared in \cite{H3}, by merely having a spatial derivative on the right, we can eliminate
the $x$ dependence with the weight on the first term on the right.
\begin{prop} \label{klainerman sideris theorem}
Suppose $n \geq 3$. Let $v \in C^\infty (\R \times \R^n)$ such that for each fixed $t$, $v(t,x)$ vanishes for $|x|$ sufficiently large. Then it follows that
\begin{equation} \label{klainerman sideris}
\left< r \right>^{n/2 - 1} \left< t - r \right> \left| \nabla_x v(t,x) \right| \lesssim \left< t \right> \sum_{|\mu| \leq n/2 + 1} \norm{Z^\mu \square v (t,\cdot)}{2}  + \ssum{|\mu| + m \leq n/2 + 1}{m \leq 1} \norm{ L^m Z^\mu  v^\prime (t,\cdot)}{2} . 
\end{equation}
\end{prop}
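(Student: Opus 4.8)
The plan is to establish \eqref{klainerman sideris} through a pointwise identity expressing $\nabla_x v$ in terms of $\square v$, the scaling vector field $L$, and the rotational vector fields $\Omega_{ij}$, after which the right-hand side factors are estimated in $L^2$ by Sobolev embedding on annuli (Lemma \ref{sobolev on annuli}). First I would recall the standard Klainerman--Sideris algebraic decomposition: in the region $r \approx t$ one writes $\langle t-r\rangle \partial_\alpha v$ as a linear combination, with coefficients of size $O(1)$, of $t\,\square v$, the expression $\sum L v'$, and terms $\Omega_{ij} v'$ divided by $r$; this comes from solving for $\partial_t^2 v$ in $\square v = \partial_t^2 v - \Delta v$, rewriting $\Delta$ in polar coordinates to isolate $\partial_r^2$, and using the identities $t\partial_t + r\partial_r = L$ and $x_i\partial_j - x_j\partial_i = \Omega_{ij}$ to convert ``good'' combinations of derivatives into $L$ and $\Omega$. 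The upshot is a pointwise bound
\[
\langle t-r\rangle\, |\nabla_x v(t,x)| \lesssim \langle t\rangle\, |\square v(t,x)| + \sum_{|\mu|\le 1,\, m\le 1}\frac{1}{\langle r\rangle}\,\bigl| L^m Z^\mu v'(t,x)\bigr| + \text{(lower-order }Z v'\text{ terms)},
\]
valid where $r\gtrsim t$; in the complementary region $r\lesssim t$ one has $\langle r\rangle^{n/2-1}\langle t-r\rangle \lesssim \langle t\rangle \langle r\rangle^{n/2-1}$ and the weight $\langle r\rangle^{n/2-1}$ is handled differently (see below).

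Next I would insert the weight $\langle r\rangle^{n/2-1}$ and apply Sobolev embedding on dyadic annuli. Fix a dyadic shell $R/2 < |x| < R$ with $R \geq 1$. On such a shell $\langle r\rangle^{n/2-1} \approx R^{n/2-1}$, so by Lemma \ref{sobolev on annuli} applied to each of the terms $\square v$, $\frac{1}{\langle r\rangle} L^m Z^\mu v'$, etc., one gets
\[
R^{n/2-1} R^{(n-1)/2}\,\|\cdots\|_{L^\infty(R/2<|x|<R)} \lesssim R^{n-1}\bigl\|\cdots\bigr\|\cdot R^{-(n-1)/2}\cdot R^{n/2-1}\big/R^{?}.
\]
More carefully: Lemma \ref{sobolev on annuli} gives $\|h\|_{L^\infty(R/2<|x|<R)} \lesssim R^{-(n-1)/2}\sum_{|\mu|\le n/2+1}\|Z^\mu h\|_{L^2(R/4<|x|<2R)}$. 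Taking $h$ to be $\langle t\rangle\,\square v$ produces the term $\langle t\rangle R^{-(n-1)/2}\sum\|Z^\mu \square v\|_{L^2}$; multiplying by the weight $\langle r\rangle^{n/2-1}\approx R^{n/2-1}$ and the factor $R^{(n-1)/2}$ that is implicit when we want a bound uniform in $R$ — here one uses $R^{n/2-1}\cdot R^{-(n-1)/2} = R^{-1/2}\le 1$ for the $\square v$ term after noting $\langle t\rangle$ is already a factor. For the term $\frac{1}{\langle r\rangle}L^m Z^\mu v' \approx R^{-1} L^m Z^\mu v'$, multiplying by $R^{n/2-1}$ gives $R^{n/2-2}$, and combined with $R^{-(n-1)/2}$ from Sobolev yields $R^{(n/2-2)-(n-1)/2} = R^{-3/2}\le 1$, so this term is controlled by $\sum_{|\mu|+m\le n/2+1, m\le 1}\|L^m Z^\mu v'\|_2$ uniformly in $R$. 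Summing over the $O(\log t)$ relevant dyadic shells is avoided because each shell's contribution is already bounded by the full $L^2$ norm without a $\log$ loss — the key point is that the negative powers of $R$ make the dyadic pieces summable (indeed, $\ell^1$-summable), so we simply take the supremum over shells. For $R\lesssim 1$ (the region near the obstacle-free origin), one uses $\langle r\rangle^{n/2-1}\approx 1$ and ordinary Sobolev embedding on the ball directly.

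I expect the main obstacle to be the bookkeeping in the region $r \approx t$, specifically verifying that the Klainerman--Sideris decomposition produces exactly the weights claimed, with the factor $\langle t\rangle$ (rather than $\langle t-r\rangle^{-1}\langle t\rangle^2$ or similar) on the $\square v$ term and no leftover $x$-dependence. The remark preceding the proposition — ``by merely having a spatial derivative on the right, we can eliminate the $x$ dependence with the weight on the first term'' — signals that having $\nabla_x v$ rather than the full $v'$ on the left lets one avoid a potential $\langle r\rangle$ weight multiplying $\square v$ that would otherwise appear, because $\partial_t^2 v$ can be written using $\square v$ and $\Delta v$, and the spatial second derivatives recombine into $\Omega^2/r^2$ and $L^2/t^2$ plus $\square v$ terms without the troublesome cross weight. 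I would want to be careful that the coefficients arising from writing $\Delta$ in polar coordinates, namely the $\frac{n-1}{r}\partial_r$ and $\frac{1}{r^2}\Delta_\omega$ pieces, are absorbed correctly: $\frac{1}{r}\partial_r v$ contributes to the $\langle r\rangle^{n/2-1}$-weighted estimate with one fewer power of $r$, which is favorable, and $\frac{1}{r^2}\Delta_\omega v$ is $\frac{1}{r^2}$ times a combination of $\Omega^2 v$, again favorable. Modulo these standard but delicate manipulations — for which I would cite \cite{KlainermanSideris}, \cite{Sideris2}, and \cite{H3} as the excerpt does — the $L^2$ estimation via Lemma \ref{sobolev on annuli} is routine.
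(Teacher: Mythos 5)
There is a genuine gap at the first step: the pointwise decomposition you posit does not exist. Klainerman--Sideris-type identities gain the weight $\la t-r\ra$ on \emph{second} derivatives, bounding $\la t-r\ra|\partial^2 v|$ by $\sum_{|\mu|+m\le 1}|\partial L^m Z^\mu v| + t|\square v|$ (this is exactly the content of Lemma \ref{Laplace lemma}); to put the weight on \emph{first} derivatives pointwise one needs the Lorentz boosts (through identities like $(t^2-r^2)\partial_\alpha = $ combinations of $L$, $\Omega_{0j}$, $\Omega_{ij}$), and boosts are precisely the vector fields this paper excludes. Your claimed bound $\la t-r\ra|\nabla_x v| \lesssim \la t\ra|\square v| + \la r\ra^{-1}\sum|L^m Z^\mu v'| + |Zv'|$ fails even in your region $r\gtrsim t$: take $t$ large, a point with $r=t/2$, and $v$ equal to $a\cdot x$ in a spacetime neighborhood of that point. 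There all second derivatives and $\partial_t v$ vanish, so $\square v=0$, $Lv'=0$, $\Omega_{ij}v'=0$, $\partial v'=0$, and your right-hand side is at most $O(|a|)$ (in fact $O(|a|/\la r\ra)$ as written), while the left-hand side is $\approx t\,|a|$. A pointwise inequality of this shape can only be saved by allowing undifferentiated terms $|L^m Z^\mu v|$ on the right (at the test point $|Lv|=r|a|\approx t|a|$), but after Sobolev embedding on annuli those produce $\norm{L^m Z^\mu v}{2}$, which is not among the norms on the right of \eqref{klainerman sideris}, so the proposition would not follow. You also never actually treat the region $r\ll t$, where $\la t-r\ra\approx\la t\ra$ and the weight must be generated by some mechanism other than an algebraic identity in $\{L,Z\}$.

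This obstruction is exactly why the paper does not argue pointwise-then-Sobolev. It instead invokes Hidano's weighted Sobolev estimate (Lemma \ref{hidano lemma}), an integrated $L^2$-based inequality rather than a pointwise identity, which bounds $\la r\ra^{n/2-1}\la t-r\ra|\nabla_x v|$ by $\ssum{|\mu|\le n/2+1}{|\nu|=2}\norm{\la t-r\ra\,\partial^\nu_x Z^\mu v}{2} + \sum_{|\mu|\le n/2+1}\norm{Z^\mu\nabla_x v}{2}$; then an integration-by-parts argument in $L^2$ reduces $\norm{\la t-r\ra\,\partial_i\partial_j Z^\mu v}{2}$ to $\norm{\la t-r\ra\,\Delta Z^\mu v}{2}$ plus lower-order terms, and only at that point, where two derivatives are present, is the Klainerman--Sideris bound (Lemma \ref{Laplace lemma}) applied to produce $\la t\ra\norm{Z^\mu\square v}{2}$ and $\norm{L^m Z^\mu v'}{2}$. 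Your dyadic Sobolev-on-annuli bookkeeping in the second half is fine in itself, but it cannot substitute for this step: what is missing is an estimate that gains $\la t-r\ra$ on first derivatives, and that is supplied by Lemma \ref{hidano lemma} together with the $L^2$ manipulation, not by any pointwise vector-field identity available in the admissible collection.
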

In order to prove Proposition \ref{klainerman sideris theorem}, we state a couple of preliminary lemmas. 
\begin{lemma}[{\cite[Lemma 2.3]{KlainermanSideris}}] \label{Laplace lemma}
Let $v \in C^\infty(\R \times \R^n)$. Then it follows that
\begin{equation} \label{Laplacian estimate}
	\left< t - r \right> |\Delta v(t,x) | 
	\lesssim \sum_{|\alpha| + \mu \leq 1} \left| \partial L^\mu Z^\alpha v(t,x) \right| 
	+ t \left| \square v(t,x) \right|.
\end{equation}
\end{lemma}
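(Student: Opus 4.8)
The statement is the Klainerman--Sideris commutator lemma (\cite[Lemma 2.3]{KlainermanSideris}), so the strategy is to reconstruct the identity that expresses the spatial Laplacian in terms of $\square$, the scaling field $L$, the rotations $\Omega_{ij}$, and first-order derivatives, and then to read off the claimed pointwise bound. The starting point is the algebraic identity for the flat d'Alembertian in polar coordinates,
\[
\square v = \partial_t^2 v - \partial_r^2 v - \frac{n-1}{r}\partial_r v - \frac{1}{r^2}\Delta_\omega v,
\]
where $\Delta_\omega$ is the Laplace--Beltrami operator on $\S^{n-1}$. Since $\Delta v = \partial_r^2 v + \frac{n-1}{r}\partial_r v + \frac{1}{r^2}\Delta_\omega v$, we get $\Delta v = \partial_t^2 v - \square v$, which is useless by itself; the point of the lemma is to trade the bad factor $\langle t-r\rangle$ for good vector fields. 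To do this, I would instead write $\Delta v = (\partial_t^2 - \partial_r^2)v - \frac{n-1}{r}\partial_r v - \frac{1}{r^2}\Delta_\omega v + 2\partial_r^2 v$... — more cleanly, use the factorization
\[
\partial_t^2 - \partial_r^2 = (\partial_t - \partial_r)(\partial_t + \partial_r), \qquad (t\partial_t + r\partial_r) = \tfrac{t+r}{2}(\partial_t+\partial_r) + \tfrac{t-r}{2}(\partial_t - \partial_r) =: L,
\]
so that $(\partial_t-\partial_r) = \frac{2}{t-r}\bigl(L - \tfrac{t+r}{2}(\partial_t+\partial_r)\bigr)$ away from the light cone, and symmetrically for $(\partial_t+\partial_r)$.

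\textbf{Key steps.} First, I would record the identity
\[
(t-r)\,\square v = (t-r)\partial_t^2 v - (t-r)\partial_r^2 v - (t-r)\frac{n-1}{r}\partial_r v - \frac{t-r}{r^2}\Delta_\omega v,
\]
and rewrite the first two terms as $(t-r)(\partial_t-\partial_r)(\partial_t+\partial_r)v$. Then, using $L = t\partial_t + r\partial_r$, one has $(t-r)(\partial_t+\partial_r) = 2L - (t+r)(\partial_t+\partial_r) + \ldots$; more efficiently, observe the exact identity $(t+r)(\partial_t-\partial_r)v - (t-r)(\partial_t+\partial_r)v = 2(r\partial_t - t\partial_r)v$ and $Lv = t\partial_t v + r\partial_r v$, from which $(t-r)\partial_t v$ and $(t-r)\partial_r v$ are expressible through $Lv$, $(r\partial_t - t\partial_r)v$, and $\partial_{t,x}v$. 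The term $(r\partial_t - t\partial_r)v$ is, up to the unit vector $x/r$, a component of $t\partial_j v - x_j\partial_t v$, i.e. a Lorentz-boost-type field — but crucially here it only needs to be estimated, and $|(r\partial_t - t\partial_r)v| \le r|\partial_t v| + t|\partial_r v|$; alternatively, using $t\partial_j - x_j\partial_t$ is disallowed, so I would instead route through $L$ and $\Omega_{ij}$ only, exploiting $\Delta_\omega v = \sum_{i<j}\Omega_{ij}^2 v$ to handle the angular term: $\left|\frac{t-r}{r^2}\Delta_\omega v\right| \le \frac{\langle t-r\rangle}{r^2}\sum_{i<j}|\Omega_{ij}^2 v| \lesssim \frac{\langle t-r\rangle}{\langle r\rangle^2}\sum_{|\alpha|\le 2}|Z^\alpha v|$, and on $r\ge 1$ the $\langle t-r\rangle/\langle r\rangle^2$ weight is absorbed, while on $r\le 1$ we simply have $\langle t-r\rangle \lesssim \langle t\rangle$ and bound by $t|\square v|$ plus derivatives directly. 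Collecting, $\langle t-r\rangle|\Delta v|$ is dominated by $\sum_{|\alpha|+\mu\le 1}|\partial L^\mu Z^\alpha v| + t|\square v|$, which is the claim; the $L^\mu$ with $\mu\le 1$ appears from the single use of $L$ needed to remove one power of $(t-r)$, and the extra $Z^\alpha$ (one rotation, or via $\Delta_\omega = \sum \Omega_{ij}^2$, potentially two) is absorbed into $|\alpha|\le 1$ after noticing that the angular term carries the harmless extra $\langle r\rangle^{-1}$ decay that compensates one lost derivative — this is exactly the bookkeeping one must get right.

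\textbf{Main obstacle.} The delicate point is the region near the light cone $r\approx t$, where $(t-r)$ is small and the naive division by $(t-r)$ is not available: there one uses that $\langle t-r\rangle |\Delta v| \lesssim |\Delta v|$ is controlled directly by $\sum_{|\alpha|\le 1}|\partial Z^\alpha v|$ (no $L$, no $\square$ needed) via $\Delta v = \sum \partial_j^2 v$ together with $\Delta_\omega = \sum\Omega_{ij}^2$ and elliptic/commutator identities relating $\partial_j^2$ to $\partial_r^2$, $\frac{1}{r}\partial_r$, $\frac{1}{r^2}\Omega^2$; the genuinely global identity must interpolate between this cone regime and the far regime where the $L$-factorization kicks in, and one must check the coefficients $\frac{t-r}{t+r}$, $\frac{t\pm r}{r}$ stay bounded after the cancellations, or are themselves absorbed by $t|\square v|$. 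I expect that, following \cite{KlainermanSideris}, the cleanest route is a single exact operator identity valid for all $(t,x)$ with $r>0$ — schematically $(t-r)\Delta v = c_1 \partial L v + c_2 \partial Z v + c_3\, t\,\square v + \text{(first order)}$ with bounded $c_i$ — obtained by multiplying the polar-coordinate expression for $\square$ by an appropriate combination and substituting $L$; the remaining work is just verifying the multi-index counts $|\alpha|+\mu\le 1$, $\mu\le 1$, which the statement already anticipates, and extending smoothly across $r=0$ where $\langle t-r\rangle|\Delta v| \lesssim \langle t\rangle|\Delta v|$ and $\Delta v = \partial_t^2 v - \square v$ gives the bound with the $t|\square v|$ term plus $|\partial^2_t v| = |\partial\partial_t v|$.
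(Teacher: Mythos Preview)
The paper does not prove this lemma; it is quoted from \cite{KlainermanSideris} and used as a black box in the proof of Proposition~\ref{klainerman sideris theorem}. So there is no proof in the paper to compare against, and your proposal has to stand on its own.

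Your sketch circles the right ideas but does not close. The linear factorizations you pursue --- for instance $(t-r)(\partial_t+\partial_r) = L + (t\partial_r - r\partial_t)$ --- force the radial Lorentz boost into the picture; you correctly flag this (``$t\partial_j - x_j\partial_t$ is disallowed''), but your proposed workarounds either discard the $(t-r)$ gain (bounding $|r\partial_t v - t\partial_r v| \le r|\partial_t v| + t|\partial_r v|$) or stay vague (``extra $\langle r\rangle^{-1}$ decay compensates one lost derivative''). The missing idea, and the way \cite{KlainermanSideris} actually proceeds, is to work with the \emph{quadratic} weight $t^2-r^2$ rather than $t-r$. Starting from $r^2\Delta = (r\partial_r)^2 + (n-2)\,r\partial_r + \sum_{i<j}\Omega_{ij}^2$, substituting $r\partial_r = L - t\partial_t$, and then using $t^2\partial_t^2 v = t^2\Delta v + t^2\Box v$, one obtains an exact identity
\[
(t^2-r^2)\,\Delta v \;=\; -t^2\,\Box v \,+\, 2t\,\partial_t Lv \,-\, L^2 v \,-\, \sum_{i<j}\Omega_{ij}^2 v \,-\, (n-2)Lv \,+\, (n-3)\,t\,\partial_t v,
\]
containing no boosts. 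Dividing by $t+r$ and using $|L^2 v|\le (t+r)\,|\partial Lv|$, $|\Omega_{ij}^2 v|\le 2r\,|\partial\Omega_{ij}v|$, and $t^2/(t+r)\le t$ yields $|t-r|\,|\Delta v| \lesssim t\,|\Box v| + |\partial Lv| + |\partial Zv| + |\partial v|$; the region $\langle t-r\rangle\approx 1$ is then trivial since $|\Delta v|\le |\partial\partial v|$ and $\partial\in Z$. You anticipate in your final paragraph that ``a single exact operator identity'' of this shape should exist, but never produce it; the quadratic weight is precisely what lets the algebra close over $\{L,\Omega_{ij},\partial\}$ without recourse to boosts.
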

The next estimate appeared in \cite{H3}. We state a variant of the original estimate in which only spatial derivatives are being applied to the solution in the left hand side.
\begin{lemma}[{\cite[Lemma 4.1]{H3}}] \label{hidano lemma}
Let $v \in C^\infty(\R \times \R^n)$. Then for $n \geq 3$, it follows that
\begin{equation} \label{hidano inequality}
\left< r \right>^{n/2 - 1} \left< t - r \right> | \nabla_x v (t,x)| \lesssim \ssum{|\mu| \leq n/2 + 1}{|\nu| = 2} \norm{\left< t - r \right>  \partial^\nu_x Z^\mu v (t,\cdot)}{2} + \sum_{|\mu| \leq n/2 + 1} \norm{Z^\mu \nabla_x v(t,\cdot)}{2} .
\end{equation}
\end{lemma}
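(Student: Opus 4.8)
The inequality \eqref{hidano inequality} is a weighted Sobolev estimate of Klainerman--Sideris type, and the plan is to prove it by combining the fundamental theorem of calculus in the radial variable with Sobolev embedding on the sphere $\S^{n-1}$. Note first that a crude dyadic decomposition in $\la r\ra$ together with Lemma~\ref{sobolev on annuli} only produces the weight $\la r\ra^{(n-1)/2}$ with \emph{no} gain of $\la t-r\ra$; the content of the lemma is precisely that one may trade a half-power of $r$ for the full factor $\la t-r\ra$, and this trade is carried out by integrating outward from the point in question. So fix $t\ge 0$, write $x=r\omega$, and assume first that $r\ge 1$. Since $v(t,\cdot)$ may be taken to have compact support in $x$ (the general case following by approximation, or simply by restricting to the case where the right-hand side of \eqref{hidano inequality} is finite), the fundamental theorem of calculus along the ray $\rho\mapsto\rho\omega$ gives, using $|\partial_\rho\la t-\rho\ra|\le 1$ and $|\partial_\rho|\nabla_x v||\le|\partial_\rho\nabla_x v|$,
\[
\la t-r\ra\,|\nabla_x v(t,r\omega)| \;\le\; \int_r^\infty\Bigl|\partial_\rho\bigl[\la t-\rho\ra\,|\nabla_x v(t,\rho\omega)|\bigr]\Bigr|\,d\rho \;\lesssim\; \int_r^\infty|\nabla_x v(t,\rho\omega)|\,d\rho + \int_r^\infty\la t-\rho\ra\,|\partial_\rho\nabla_x v(t,\rho\omega)|\,d\rho .
\]

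Next I would apply Sobolev embedding on $\S^{n-1}$ to each integrand for fixed $\rho$. Since $H^{k'}(\S^{n-1})\hookrightarrow L^\infty(\S^{n-1})$ with $k'=\lfloor(n+1)/2\rfloor\le n/2+1$, and since the rotation fields commute with $\partial_\rho=\sum_i\omega_i\partial_i$ (whose coefficients are smooth for $\rho\ge1$) up to terms of the same differential order, this yields $|\nabla_x v(t,\rho\omega)|\lesssim\sum_{|\alpha|\le k'}\norm{Z^\alpha\nabla_x v(t,\rho\cdot)}{L^2(\S^{n-1})}$ and $|\partial_\rho\nabla_x v(t,\rho\omega)|\lesssim\sum_{|\nu|=2,\,|\mu|\le k'}\norm{\partial^\nu_x Z^\mu v(t,\rho\cdot)}{L^2(\S^{n-1})}$. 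I would then apply the Cauchy--Schwarz inequality in $\rho$ via the factorization $1=\rho^{-(n-1)/2}\cdot\rho^{(n-1)/2}$. Because $n\ge3$ the integral $\int_r^\infty\rho^{-(n-1)}\,d\rho\approx r^{-(n-2)}$ converges, while the remaining factor reassembles into an $L^2(\R^n)$ norm via $\rho^{n-1}\,d\rho\,d\omega=dy$; multiplying through by $\la r\ra^{n/2-1}\approx r^{(n-2)/2}$ then exactly cancels the $r^{-(n-2)/2}$ produced by Cauchy--Schwarz, leaving
\[
\la r\ra^{n/2-1}\la t-r\ra\,|\nabla_x v(t,x)| \;\lesssim\; \sum_{|\mu|\le k'}\norm{Z^\mu\nabla_x v(t,\cdot)}{2} + \ssum{|\mu|\le k'}{|\nu|=2}\norm{\la t-r\ra\,\partial^\nu_x Z^\mu v(t,\cdot)}{2},
\]
which is \eqref{hidano inequality} for $r\ge1$ since $k'\le n/2+1$.

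For $r\le1$, where $\la r\ra^{n/2-1}\approx1$, I would again start from the radial fundamental theorem of calculus but split $\int_r^\infty=\int_r^1+\int_1^\infty$. The outer piece is handled exactly as above, now with lower limit $1$ in place of $r$, so that no extra power of $r$ appears and $\int_1^\infty\rho^{-(n-1)}\,d\rho$ is a finite constant. For the inner piece, where the rotation fields degenerate at the origin, I would instead use the pointwise bound $|\partial_\rho\nabla_x v|\le|\nabla^2_x v|$ together with ordinary Sobolev embedding on the ball $\{|y|<2\}$ applied to $\nabla_x v$ and $\nabla^2_x v$, and the elementary comparison $\la t-\rho\ra\approx\la t\ra\approx\la t-|y|\ra$ valid for $\rho\le1$, $|y|<2$, and $r\le1$; this bounds the inner contribution by $\sum_{|\mu|\le\lfloor n/2\rfloor+1}\norm{Z^\mu\nabla_x v}{L^2(|y|<2)}+\sum_{|\nu|=2,\,|\mu|\le\lfloor n/2\rfloor+1}\norm{\la t-|y|\ra\,\partial^\nu_x Z^\mu v}{L^2(|y|<2)}$, each term of which is dominated by the right-hand side of \eqref{hidano inequality}.

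The routine but slightly delicate bookkeeping is in checking that after commuting the rotation fields past $\partial_\rho$ the second-derivative term genuinely carries exactly $|\nu|=2$ and at most $k'\le n/2+1$ further vector fields, and that the unweighted term never acquires a $\la t-r\ra$ weight. The one genuinely important step — and the reason the estimate holds with the stated non-sharp exponent $\la r\ra^{n/2-1}$ rather than the sharp $\la r\ra^{(n-1)/2}$ — is the Cauchy--Schwarz balance in the second paragraph: the gain $r^{-(n-2)/2}$ obtained from integrating $\rho^{-(n-1)}$ over $[r,\infty)$ must exactly match the weight $\la r\ra^{n/2-1}$, and it is precisely here that the hypothesis $n\ge3$ (needed for that integral to converge) and the compact support of $v(t,\cdot)$ in $x$ enter.
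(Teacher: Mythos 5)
The paper does not actually prove this lemma: it is quoted from Hidano \cite[Lemma 4.1]{H3} (with the cosmetic change that only spatial derivatives appear on the left-hand side), so there is no in-paper proof to compare against. Your argument is correct, and it is essentially the standard Klainerman--Sideris/Hidano argument: the radial fundamental theorem of calculus applied to $\la t-\rho\ra\,|\nabla_x v(t,\rho\omega)|$, Sobolev embedding on $\S^{n-1}$ (uniform in $\rho$, with the rotations supplying the angular derivatives and commuting with $\partial_x^2$ only up to constant-coefficient second-order terms, so the $|\nu|=2$ bookkeeping works out), and Cauchy--Schwarz in $\rho$ via $1=\rho^{-(n-1)/2}\rho^{(n-1)/2}$, whose gain $r^{-(n-2)/2}$ (this is where $n\ge3$ enters) exactly matches the weight $\la r\ra^{n/2-1}$. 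Your separate treatment of $r\le 1$, using ordinary Sobolev embedding on $\{|y|<2\}$ with the factor $\la t\ra\approx\la t-|y|\ra$ attached only to terms carrying two derivatives, is also right and correctly respects the asymmetry of the right-hand side (no $\la t-r\ra$ weight on the first-derivative terms). Two minor points you should make explicit: the fundamental theorem of calculus needs decay along rays, so some hypothesis such as vanishing for large $|x|$ (which is how the lemma is applied in this paper, cf.\ Proposition \ref{klainerman sideris theorem}) should be assumed or obtained by a genuine limiting argument rather than from finiteness of the right-hand side alone; and $|\nabla_x v|$ is only Lipschitz near its zeros, so one should differentiate it a.e.\ or run the radial argument on each component $\partial_j v$, which changes nothing.
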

\noindent
We are now ready to prove Proposition \ref{klainerman sideris theorem}.
\begin{proof}[Proof of Proposition \ref{klainerman sideris theorem}]
Applying Lemma \ref{hidano lemma}, one can see that it suffices to show that the first term in the right hand side of \eqref{hidano inequality} is controlled by the right hand side of \eqref{klainerman sideris}. Fixing $\mu$ and letting $w = Z^\mu v$, We note that integration by parts gives
\begin{equation} \label{klainerman sideris ineq 1}
\begin{split}
\sum_{i,j=1}^n \norm{\left< t - r \right> \partial_i \partial_j
  w(t,\cdot)}{2}^2 & = \sum_{i,j=1}^n \int \left< t -r
\right>^2 \partial_i \partial_j w \; \partial_i \partial_j w \; dx \\
&= \sum_{i,j=1}^n \int \left< t - r \right>^2 \partial_i^2 w \partial_j^2 w \; dx \\
& \quad + 2 \sum_{i,j=1}^n \int \left<t-r\right> \partial_i \left< t - r \right> \;  \partial_i w\partial_j^2 w \; dx \\
& \quad - 2 \sum_{i,j=1}^n \int \left< t - r \right>\partial_j  \left< t - r \right> \partial_i w \;  \partial_i \partial_j w \; dx .
\end{split}
\end{equation}
Applying Cauchy-Schwarz and the inequality $ab \leq (a^2+b^2)/2$, we see that the right hand side of \eqref{klainerman sideris ineq 1} is bounded by
\begin{align*}
C \norm{\left< t -r \right> \Delta w(t,\cdot)}{2}^2 + C \norm{\nabla_x w(t,\cdot)}{2}^2 + \dfrac{1}{4} \sum_{i,j=1}^n \norm{\left< t - r \right> \partial_i \partial_j w(t,\cdot)}{2}^2 ,
\end{align*}
where $C$ is sufficiently large. Applying Lemma \ref{Laplace lemma} to the first term in the above expression, we see that it is controlled by the right hand side of \eqref{klainerman sideris}. The second term is controlled by the second term in the right hand side of \eqref{klainerman sideris}. The last term can be bootstrapped back into the left hand side of \eqref{klainerman sideris ineq 1}. 
\end{proof}

Now we will combine Lemma \ref{DZ sobolev estimate} and Proposition \ref{klainerman sideris theorem} to prove our main pointwise estimate in $\R \times \R^n$.
\begin{prop}
Let $v \in C^\infty (\R \times \R^n)$ where $n \geq 3$ such that for each fixed $t$, $v(t,x)$ vanishes for $|x|$ sufficiently large. Then it follows that
\begin{multline} \label{klainerman sideris no gradient}
\left< r \right>^{n/2 - 1} \left< t - r \right> \left|  v(t,x) \right|  
\lesssim 
\ssum{|\mu| + m \leq n/2 + 1}{m \leq 1} \norm{L^m Z^\mu v(0,\cdot)}{2} + \ssum{|\mu| + m \leq n/2 + 1}{m \leq 1} \norm{L^m Z^\mu \partial_t v(0,\cdot)}{\dot{H}^{-1}}
\\ +\left< t \right> \sum_{|\mu| \leq n/2 + 1} \norm{ \partial^\mu \square v (t,\cdot)}{L^2 (|x| < 1)} 
 + \left< t \right> \sum_{|\mu| \leq n/2 + 1} \norm{|x|^{-(n-2)/2} Z^\mu \square v (t,\cdot)}{L^1_r L^2_\omega (|x| > 1)} \\ 
 + \int^t_0 \ssum{|\mu| + m \leq n/2 + 1}{m \leq 1} \norm{ L^m \partial^\mu  \square v(s,\cdot)}{L^2(|x| < 1)} \; ds \\
 + \int^t_0 \ssum{|\mu| + m \leq n/2 + 1}{m \leq 1} \norm{|x|^{-(n-2)/2} L^m Z^\mu \square v(s,\cdot)}{L^1_r L^2_\omega (|x| > 1)} \; ds. 
\end{multline}
\end{prop}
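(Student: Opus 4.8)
The plan is to mimic the proof of Proposition \ref{klainerman sideris theorem}, but replace the use of Lemma \ref{hidano lemma} with a version that estimates $v$ itself rather than $\nabla_x v$, and to obtain that version by applying Lemma \ref{DZ sobolev estimate} in place of the $\dot H^{-1}$-free estimates. Concretely, in the chain of inequalities in the proof of Proposition \ref{klainerman sideris theorem}, the quantity $\left< r\right>^{n/2-1}\left< t-r\right>|\nabla_x v|$ was controlled after integrating the second-order spatial derivatives $\partial^\nu_x Z^\mu v$ against the weight $\left< t-r\right>$. To get a bound on $|v|$ itself one instead wants to control $\left< r\right>^{n/2-1}\left< t-r\right>|v(t,x)|$ by $\sum_{|\mu|\le n/2+1}\norm{\left< t-r\right>\nabla_x Z^\mu v(t,\cdot)}{L^2}+\sum_{|\mu|\le n/2+1}\norm{Z^\mu v(t,\cdot)}{\dot H^{-1}}$; this is the natural ``one fewer derivative'' analogue of Lemma \ref{hidano lemma}, and it follows from the same Sobolev-on-the-sphere argument that proves Lemma \ref{hidano lemma} with one derivative shifted off $v$ onto the dual side, i.e.\ it is the statement obtained by running the Klainerman--Sideris device with $\dot H^{-1}$ playing the role that $L^2$ plays there. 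Thus the first reduction is: it suffices to bound $\norm{\left< t-r\right>\nabla_x Z^\mu v(t,\cdot)}{L^2}$ and $\norm{Z^\mu v(t,\cdot)}{\dot H^{-1}}$ by the right-hand side of \eqref{klainerman sideris no gradient}, for $|\mu|\le n/2+1$.

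For the first of these, I would argue exactly as in the proof of Proposition \ref{klainerman sideris theorem}: writing $w=Z^\mu v$ and integrating by parts as in \eqref{klainerman sideris ineq 1} (now with a single derivative $\partial_i w$ in place of $\partial_i\partial_j w$), one reduces $\norm{\left< t-r\right>\nabla_x w}{2}^2$ to a multiple of $\norm{\left< t-r\right>\Delta^{-1}\partial\,(\cdots)}{2}$-type terms plus a bootstrappable remainder; more directly, one can avoid reintroducing $\Delta$ and instead note that Lemma \ref{Laplace lemma} already gives $\left< t-r\right>|\Delta v|\lesssim \sum_{|\alpha|+m\le 1}|\partial L^m Z^\alpha v|+t|\square v|$, so combining with the standard $\norm{\nabla_x h}{2}\lesssim \norm{\Delta h}{\dot H^{-1}}$ is not quite what is wanted. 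Instead the clean route is: apply Lemma \ref{hidano lemma}-style integration by parts to push one derivative off, landing on $\norm{\left< t-r\right>\Delta Z^\mu v}{2}$, then invoke Lemma \ref{Laplace lemma} to replace $\left< t-r\right>\Delta Z^\mu v$ by $\sum_{|\alpha|+m\le 1,\,|\beta|\le|\mu|}|\partial L^m Z^\alpha Z^\beta v| + t|Z^\mu\square v|$. The $L$- and $Z$-derivative terms are exactly the $\sum_{|\mu|+m\le n/2+1,\,m\le1}\norm{L^m Z^\mu v'(t,\cdot)}{2}$ appearing (via \eqref{main energy ineq}) on the right, and the $t|Z^\mu\square v|$ term is split into $|x|<1$ and $|x|>1$ and estimated by the third and fourth terms on the right of \eqref{klainerman sideris no gradient} — the $|x|<1$ piece in $L^2$ and the $|x|>1$ piece via Lemma \ref{DZ sobolev estimate}'s mixed norm $\norm{|x|^{-(n-2)/2}Z^\mu\square v}{L^1_rL^2_\omega}$, after controlling $\norm{Z^\mu v'(t,\cdot)}{2}$ through the energy/localized-energy estimate of Lemma \ref{main energy} with the divergence structure handled as in Propositions \ref{u bound prop} and \ref{divergence form prop}.

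For the $\dot H^{-1}$ term, I would apply Lemma \ref{DZ sobolev estimate} directly to $h=Z^\mu v(t,\cdot)$: it gives $\norm{Z^\mu v(t,\cdot)}{\dot H^{-1}}\lesssim \norm{Z^\mu v(t,\cdot)}{L^{2n/(n+2)}(|x|<1)}+\norm{|x|^{-(n-2)/2}Z^\mu v(t,\cdot)}{L^1_rL^2_\omega(|x|>1)}$. Each of these is then run back in time via the fundamental theorem of calculus, $Z^\mu v(t,\cdot)=Z^\mu v(0,\cdot)+\int_0^t \partial_s Z^\mu v(s,\cdot)\,ds$, and the $\partial_s Z^\mu v$ is in turn controlled by writing the wave equation in first-order form; combined with Duhamel and \eqref{u bound}/\eqref{inv laplace} this produces precisely the data terms $\norm{L^m Z^\mu v(0,\cdot)}{2}+\norm{L^m Z^\mu\partial_t v(0,\cdot)}{\dot H^{-1}}$ and the two time-integrated forcing terms $\int_0^t\norm{L^m\partial^\mu\square v}{L^2(|x|<1)}\,ds$ and $\int_0^t\norm{|x|^{-(n-2)/2}L^m Z^\mu\square v}{L^1_rL^2_\omega(|x|>1)}\,ds$ on the right of \eqref{klainerman sideris no gradient}. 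Here one uses $[\square,Z]=0$ and $[\square,L]=2\square$ to commute vector fields past $\square$, which is why the $L^m$ with $m\le 1$ appears on the forcing terms and why no $L$ appears on the $|x|<1$ forcing with two spatial derivatives. The main obstacle I expect is bookkeeping: carefully tracking which terms need the $\dot H^{-1}$/mixed-norm treatment of Lemma \ref{DZ sobolev estimate} versus a plain $L^2$ treatment, ensuring the number of $L$'s never exceeds one on any term (so that the weight $\left< t\right>$ rather than $\left< t\right>^2$ suffices), and checking that the ``one derivative fewer'' analogue of Lemma \ref{hidano lemma} genuinely holds — that last point being where the symmetry between $L^2$ and $\dot H^{-1}$ in the Klainerman--Sideris integration-by-parts argument must be verified rather than assumed, since the weight $\left< t-r\right>$ interacts with the nonlocal operator $(-\Delta)^{-1/2}$.
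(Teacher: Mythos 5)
There is a genuine gap, and it sits exactly where the paper's proof does its one piece of real work. Your plan runs the Klainerman--Sideris machinery directly on $Z^\mu v$ and therefore produces forcing terms measured in plain (weighted) $L^2$: after your use of Lemma \ref{Laplace lemma} you are left with $\la t\ra\sum_{|\mu|\le n/2+1}\norm{Z^\mu \square v(t,\cdot)}{2}$, and after the energy inequality with $\int_0^t\sum\norm{L^mZ^\mu\square v(s,\cdot)}{2}\,ds$. You then assert these are ``split into $|x|<1$ and $|x|>1$ and estimated by the third and fourth terms'' of \eqref{klainerman sideris no gradient}. That step is false: the full $L^2$ norm of $Z^\mu\square v$ is \emph{not} controlled by $\norm{Z^\mu\square v}{L^2(|x|<1)}+\norm{|x|^{-(n-2)/2}Z^\mu\square v}{L^1_rL^2_\omega(|x|>1)}$ (test against functions concentrated on thin annuli). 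Lemma \ref{DZ sobolev estimate} only bounds the $\dot H^{-1}$ norm by those quantities, so to invoke it you must first place an operator of order $-1$ on $\square v$ (and on $\partial_t v(0,\cdot)$). Nothing in your argument does this. The paper's proof accomplishes it by applying the already-proved gradient estimate \eqref{klainerman sideris} not to $v$ but to $v_j$ defined by the multiplier $\xi_j/|\xi|^2$, so that $iv=\sum_j\partial_j v_j$; then $\square v_j=\Delta^{-1}\partial_j\square v$ and $\partial_t v_j(0,\cdot)=\Delta^{-1}\partial_j\partial_t v(0,\cdot)$, and the commutator and boundedness facts \eqref{InverseLaplace} let one pass from $\norm{L^mZ^\mu\square v_j}{2}$ to $\norm{L^mZ^\mu\square v}{\dot H^{-1}}$, to which Lemma \ref{DZ sobolev estimate} applies, yielding precisely the right-hand side of \eqref{klainerman sideris no gradient} (with $L^{2n/(n+2)}(|x|<1)\lesssim L^2(|x|<1)$ by H\"older). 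No new ``one-derivative-lower Hidano lemma'' is needed.

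Two further steps in your outline are also unjustified. First, your route to $\norm{\la t-r\ra\nabla_x Z^\mu v(t,\cdot)}{2}$ by ``pushing one derivative off'' is circular: integrating by parts in $\int\la t-r\ra^2|\nabla_x w|^2$ produces a pairing of $\la t-r\ra\Delta w$ with $\la t-r\ra\,w$ itself, and a weighted $L^2$ bound on $w$ is exactly what the proposition is trying to establish; the second-order computation \eqref{klainerman sideris ineq 1} works precisely because the boundary terms there involve only unweighted first derivatives. (There is in fact no first-order Klainerman--Sideris bound of the form $\norm{\la t-r\ra\nabla_x w}{2}\lesssim\norm{w'}{2}+\dots$ available without boosts.) Second, your treatment of $\norm{Z^\mu v(t,\cdot)}{\dot H^{-1}}$ by applying Lemma \ref{DZ sobolev estimate} to the solution itself and then using the fundamental theorem of calculus and Duhamel introduces norms of $v$ (e.g.\ $\norm{|x|^{-(n-2)/2}Z^\mu v(t,\cdot)}{L^1_rL^2_\omega}$ and time integrals of $\norm{Z^\mu v'}{}$ in non-$L^2$ norms) that do not appear on, and are not controlled by, the right-hand side of \eqref{klainerman sideris no gradient}; the estimates \eqref{u bound} and \eqref{inv laplace} you cite bound $L^2_x$ norms of $v$, not these quantities. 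The fix for all of these issues is the same: work with $v_j=\Delta^{-1}\partial_j v$ from the outset, as the paper does, rather than trying to build an $\dot H^{-1}$ variant of Lemma \ref{hidano lemma}.
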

\begin{proof}
We proceed in a manner similar to the beginning of the proof of Theorem 2.3 in \cite{DMSZ}.  
We first note that for $1 \leq i,j,k \leq n$ and any $h \in C^\infty_0(\R \times \R^n)$ the following inequalities hold:
\begin{equation} \label{InverseLaplace}
\begin{split}
& \norm{\Delta^{-1}\partial_j h}{2} \lesssim \norm{h}{\dot{H}^{-1}(\R^n)}, \\
& \| [\Omega_{ij} , \Delta^{-1}\partial_k] h \|_2 \lesssim \sum^n_{\ell=1} \|\Delta^{-1}\partial_\ell h\|_2, \\
& \| [L , \Delta^{-1}\partial_j] h \|_2 \lesssim  \|\Delta^{-1}\partial_j h\|_2.
\end{split}
\end{equation}
We define
\[
v_j(t,x) = (2 \pi)^{-n/2} \int \dfrac{\xi_j}{|\xi|^2} e^{i x \cdot \xi} \; \widehat{v} (t,\xi) \; d \xi \]
where $\widehat{v}$ is the Fourier transform of $v$ in the $x$ variable. One can see that $i v = \sum^n_{j=1} \partial_j v_j .$ We then apply Proposition \ref{klainerman sideris theorem} and the energy inequality to see that
\begin{multline} \label{ks no grad ineq}
\left< r \right>^{n/2 - 1} \left< t - r \right> \left|  \partial_j v_j
  (t,x) \right| \lesssim   \ssum{|\mu| + m \leq n/2 + 1}{m \leq 1} \norm{ L^m Z^\mu v_j'(0,\cdot)}{2}
+\left< t \right> \sum_{|\mu| \leq n/2 + 1} \norm{Z^\mu \square v_j (t,\cdot)}{2} \\ 
 + \int^t_0 \ssum{|\mu| + m \leq n/2 + 1}{m \leq 1} \norm{ L^m Z^\mu  \square v_j (s,\cdot)}{2} \; ds.
\end{multline}
By our earlier observations \eqref{InverseLaplace}, we can apply Lemma \ref{DZ sobolev estimate} to to see that the right hand side of \eqref{ks no grad ineq} is controlled by the right hand side of \eqref{klainerman sideris no gradient}. This completes the proof.
\end{proof}
We will now extend Proposition \ref{klainerman sideris} to wave equations in exterior domains. Although this is stated for 4-dimensional domains, it is evident that similar arguments will yield estimates for higher dimensions.
\begin{prop} \label{klainerman sideris no gradient prop}
Let $v \in C^\infty(\R \times \ext)$ solve
\begin{equation} \label{KS wave}
	\left\{
	\begin{array}{l}
		\square v (t,x) = G (t,x) , \quad (t,x) \in \R \times \ext, \\
		v(t,x) = 0 , \quad x \in \partial \K, \\
		v(t,x) = 0 , \quad t \leq 0.
	\end{array}
	\right.
\end{equation}
Also suppose that for each fixed $t$, $v(t,x)$ vanishes for $|x|$ sufficiently large. 
Then, for $M$ and $|\mu| = N$ fixed, the following estimate holds
\begin{multline} \label{klainerman sideris no gradient ext domain}
\left< r \right> \left< t - r \right> \left| L^M Z^\mu  v(t,x) \right|
\lesssim 
\left< t \right> \ssum{|\nu| \leq  N + M + 3}{m \leq M} \norm{|x|^{-1} L^m Z^\nu \square v (t,\cdot)}{L^1_r L^2_\omega (|x| > 1)} \\ 
 + \int^t_0 \ssum{|\nu| + m \leq  N + M + 3}{m \leq M + 1} \norm{|x|^{-1} L^m Z^\nu \square v(s,\cdot)}{L^1_r L^2_\omega (|x| > 1)} \; ds \\
 + \int^t_0 \ssum{|\nu| + m \leq  N + M + 4}{m \leq M + 1} \norm{L^m \partial^\nu v^\prime(s,\cdot)}{L^2(|x| < 4)} \; ds .
\end{multline}
\end{prop}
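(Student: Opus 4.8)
The plan is to reduce the exterior-domain estimate \eqref{klainerman sideris no gradient ext domain} to the boundaryless estimate \eqref{klainerman sideris no gradient} by a cutoff decomposition, exactly as in the proof of the localized energy Proposition preceding \eqref{localized energy local control}, and then absorb the commutator terms and the near-obstacle contribution into the terms already present on the right. Fix a cutoff $\rho \in C^\infty(\R^4)$ with $\rho(x) = 1$ for $|x| < 5$ and $\rho(x) = 0$ for $|x| > 6$, and let $v_0$ solve the boundaryless wave equation $\square v_0 = G$ on $\R \times \R^4$ with vanishing data for $t \le 0$, so that $w := \rho v_0 + (v - \rho v_0)$ equals $v$ for $|x| < 5$. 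We then split into the two regimes $|x| \ge 5$ and $|x| < 5$.

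For the region $|x| \ge 5$ (where the factor $\langle r\rangle$ is comparable to $\langle r \rangle^{n/2-1}$ with $n = 4$ and we may ignore the obstacle), we apply the boundaryless estimate \eqref{klainerman sideris no gradient} directly to $v_0$; here $Z$ and $L$ genuinely commute with $\square$ up to the harmless factor $[\square, L] = 2\square$, so $\square L^m Z^\mu v_0 = L^m Z^\mu G$ up to lower-order $L^{m'} Z^\mu G$ terms with $m' < m$, which is why the right side of \eqref{klainerman sideris no gradient ext domain} carries sums over $m \le M+1$ rather than $m \le M$ in the time-integrated terms. The initial-data terms in \eqref{klainerman sideris no gradient} vanish since $v$, hence $v_0$, vanishes for $t \le 0$; the $L^2(|x|<1)$ forcing terms are dominated by the last term of \eqref{klainerman sideris no gradient ext domain}. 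For the region $|x| < 5$, we note $v = w$ there, and $w$ solves a Dirichlet problem with forcing $\square w = \rho G - 2\nabla\rho \cdot \nabla v_0 - (\Delta\rho) v_0$, supported where $5 < |x| < 6$ for the commutator part. On this bounded region the weights $\langle r \rangle \langle t - r\rangle$ are bounded by $\langle t \rangle$ times a constant, so it suffices to bound $\sum_{|\mu|\le \ldots, m \le M} \|L^m Z^\mu w(t,\cdot)\|_{L^2(|x|<6)}$, and this follows from the Dirichlet energy estimate \eqref{localized energy local control} applied to $w$ (using the Dirichlet condition $w|_{\bdy} = 0$) together with \eqref{local control} to pass from $w'$ back to $w$, and elliptic regularity \eqref{elliptic regularity ineq} to handle the vector fields. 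The commutator forcing terms $\nabla\rho\cdot\nabla v_0$ and $(\Delta\rho)v_0$, being supported in $5 < |x| < 6$ and built from $v_0$, are controlled by applying \eqref{klainerman sideris no gradient} (in the already-established region $|x|\ge 5$) or by Sobolev embedding and \eqref{main energy ineq}, which reproduces the first two families of terms on the right of \eqref{klainerman sideris no gradient ext domain}.

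The main obstacle will be bookkeeping the vector-field counts so that the stated index ranges ($|\nu| \le N+M+3$ or $N+M+4$, $m \le M$ or $M+1$) actually come out, rather than anything conceptual. Two effects inflate the counts: first, applying $L^m Z^\mu$ to the equation for $w$ produces, via $[\square, \tilde L]$ and the cutoff $\rho$, extra derivatives landing on $v_0$ — this is where the "$+3$" and the raised bound $m \le M+1$ originate, paralleling the derivative losses in Proposition \ref{boundary term prop} and in \cite[Lemma 2.8]{MS3}. Second, moving between $Z^\mu$ on the exterior side and $\partial^\mu$ in the $L^2(|x|<4)$ term requires elliptic regularity \eqref{elliptic regularity ineq}, which costs one further derivative, producing the "$+4$" in the last term. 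One must also check that the modified scaling field $\tilde L$ used near the boundary can be traded for the honest $L$ in the interior region $|x| > 1$ where $\chi \equiv 1$, so that the statement can be phrased purely in terms of $L$; this is the same device used in Lemma \ref{modified energy} and creates no new difficulty. Finally, one verifies that the region $|x| < 5$ analysis can be carried out on the slightly larger ball $|x| < 6$ and then restricted, so that the $L^2(|x| < 4)$ norm on the right (with room to spare) genuinely dominates the relevant interior energy.
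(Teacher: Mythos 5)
Your decomposition does not work, and the failure is conceptual rather than a matter of bookkeeping. You take $v_0$ to be the solution of the \emph{free} wave equation $\square v_0 = G$ on $\R\times\R^4$ with vanishing data and then, in the far region $|x|\ge 5$, apply the Minkowski estimate \eqref{klainerman sideris no gradient} to $v_0$ as if that controlled $v$ there. But $v\neq v_0$ for $|x|\ge 5$: the Dirichlet solution differs from the free solution by the wave scattered off $\K$, which propagates out to arbitrarily large $|x|$, and nothing in your argument estimates this difference. Your auxiliary function $w:=\rho v_0+(v-\rho v_0)$ is identically $v$, so it provides no decomposition at all, and the equation $\square w=\rho G-2\nabla\rho\cdot\nabla v_0-(\Delta\rho)v_0$ you assign to it is false (indeed $\square v=G$); you have transplanted the splitting used for the localized energy estimate \eqref{localized energy local control}, where the task is local control of $v$ for forcing supported in a prescribed region, to a problem where one needs pointwise control of $v$ in the far region. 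In addition, your treatment of the near region via \eqref{localized energy local control}, elliptic regularity and the hypothesis \eqref{local energy} is both unnecessary and off target: the conclusion \eqref{klainerman sideris no gradient ext domain} deliberately leaves the near-obstacle contribution in the form $\int_0^t\sum\|L^m\partial^\nu v'(s,\cdot)\|_{L^2(|x|<4)}\,ds$, to be handled only later (via \eqref{boundary term ineq}) in the bound for term $IX$; no geometric assumption enters this proposition's proof.

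The paper's proof instead cuts off the \emph{exterior solution itself}: with $\eta=1$ for $|x|>4$ and $\eta=0$ for $|x|<3$, the function $\eta v$ (extended by zero through $\K$) solves a boundaryless equation with forcing $\eta G-2\nabla_x\eta\cdot\nabla_x v-(\Delta\eta)v$, so \eqref{klainerman sideris no gradient} applies to it directly; the commutator terms are supported in $3<|x|<4$ and are built from $v$ itself, hence are dominated by $L^2(|x|<4)$ norms of $v$ and $v'$. The region $|x|<4$ is handled by Sobolev embedding alone, and the Fundamental Theorem of Calculus in $t$ (using $v\equiv 0$ for $t\le 0$, the identity $t\partial_t=L-r\partial_r$, and \eqref{local control}) converts the resulting local terms, including the factor $\langle t\rangle$, into the last term of \eqref{klainerman sideris no gradient ext domain}; this, not the commutator $[\square,L]=2\square$ (which never increases the number of $L$'s beyond $M$), is what produces $m\le M+1$ in the local term, while the $m\le M+1$ in the time-integrated forcing term comes from the single $L$ already present on the right of \eqref{klainerman sideris no gradient}. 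To repair your argument you would have to replace your free $v_0$ by $\eta v$, or else separately estimate the scattered wave $v-v_0$ in the far region, which amounts to redoing the paper's argument.
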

\begin{proof}
When $|x| < 4$, one can apply Sobolev embedding to see that the left hand side of \eqref{klainerman sideris no gradient ext domain} is controlled by
\begin{equation} \label{klainerman sideris no gradient ext domain ineq 1}
(1+t) \ssum{|\nu| + m \leq N + M + 3}{m \leq M}  \norm{L^m \partial^\nu v(t,\cdot)}{L^2(|x| < 4)} .
\end{equation}
For $|x| > 4$, fix a cutoff $\eta \in C^\infty(\R^n)$ such that $\eta(x) = 1$ for $|x| > 4$ and vanishes for $|x| < 3$. If $v_0 = \eta v$, then it follows that $v_0$ solves the boundaryless wave equation 
\begin{equation} \label{KS boundaryless}
\left\{
\begin{array}{l}
\square v_0 = \eta G - 2 \nabla_x \eta \cdot \nabla_x v - (\Delta \eta) v,\\
v_0(t,x) = 0, \quad t \leq 0.
\end{array}
\right. 
\end{equation}
Applying the analogous Minkowski space estimate provided in 
\eqref{klainerman sideris no gradient}, we see that we obtain the inequality
\begin{multline} \label{klainerman sideris no gradient ext domain ineq 2}
\left< r \right> \left< t - r \right> \left| L^M Z^\mu  v_0(t,x)
\right|  \lesssim 
\left< t \right> \ssum{|\nu| \leq N + M + 3}{m \leq M} \norm{|x|^{-1} L^m Z^\nu \square v (t,\cdot)}{L^1_r L^2_\omega (|x| > 1)} \\ 
 + \int^t_0 \ssum{|\nu| + m \leq  N + M + 3}{m \leq M + 1} \norm{|x|^{-1} L^m Z^\nu \square v(s,\cdot)}{L^1_r L^2_\omega (|x| > 1)} \; ds \\
 + \la t\ra \ssum{|\nu| + m \leq  N + M + 4}{m \leq M} \norm{L^m \partial^\nu v (t,\cdot)}{L^2(|x| < 4)} \\
 + \int^t_0 \ssum{|\nu| + m \leq  N + M + 4}{m \leq M + 1} \norm{L^m \partial^\nu v(s,\cdot)}{L^2(|x| < 4)} \; ds. 
\end{multline}
Applying the Fundamental Theorem of Calculus and \eqref{local control}, we see that \eqref{klainerman sideris no gradient ext domain ineq 1} and the last two terms in \eqref{klainerman sideris no gradient ext domain ineq 2} are controlled by
\[
\int^t_0 \ssum{|\nu| + m \leq  N + M + 4}{m \leq M + 1} \norm{L^m \partial^\nu v^\prime(s,\cdot)}{L^2(|x| < 4)} \; ds.
\]
This completes the proof.
\end{proof}
\newsection{Proof of Theorem \ref{thm1}}
We will now prove Theorem \ref{thm1} using an iteration argument. Using standard local existence theory (see \cite[Theorems 9.4, 9.5]{KSS2} for more details), we first note that we have a local solution on a fixed timestrip.
\begin{theorem} \label{local existence thm}
	If $\epsilon$ in \eqref{lifespan} is sufficiently small and $f,g$ are as in Theorem \ref{thm1} with $N \geq 9$, then there is a local in time solution $u \in C^\infty ([0,2] \times \ext )$ to \eqref{main} that satisfies
	\begin{equation} \label{local existence}
		\sup_{0 \leq t \leq 2 } \sum_{|\mu| \leq N} \norm{\partial^\mu u(t,\cdot)}{2} \leq C \epsilon.
	\end{equation}
\end{theorem}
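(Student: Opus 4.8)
The plan is to invoke the standard local existence theory for quasilinear Dirichlet-wave equations, e.g. \cite[Theorems 9.4, 9.5]{KSS2}, and then to promote the qualitative existence statement to the quantitative bound \eqref{local existence} by a straightforward energy argument on the short timestrip $[0,2]$. First I would observe that since $f,g \in C^\infty(\ext)$ are compactly supported and satisfy the compatibility conditions to infinite order, the cited local existence theory produces a smooth solution $u$ on $[0,T_*] \times \ext$ for some $T_* > 0$, and moreover the existence time may be taken uniformly bounded below as long as a fixed higher-order Sobolev norm of the data stays bounded; in particular, for $\varepsilon$ sufficiently small, $T_* \geq 2$. The compatibility conditions guarantee that the compatibility functions $\psi_k(J_k f, J_{k-1}g)$ vanish on $\bdy$ through the relevant order, so that all the time derivatives $\partial_t^j u(0,\cdot)$ are well-defined and controlled, via the equation \eqref{main}, by $\sum_{|\mu| \le N}\|\partial_x^\mu f\|_2 + \sum_{|\mu| \le N-1}\|\partial_x^\mu g\|_2 \le \varepsilon$. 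This is where $N \ge 9$ enters: one needs enough regularity of the data to run the fixed-point/energy scheme in four spatial dimensions at the regularity level $\sum_{|\mu|\le N}\|\partial^\mu u\|_2$.

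Next I would set up the energy estimate. Write the nonlinear equation $\Box u = Q(u,u',u'')$ in the perturbed form $\Box_\gamma u = \tilde G$ with $\gamma^{\alpha\beta} = -B^{\alpha\beta}(u,u')$ and $\tilde G = A(u,u')$, and apply $\partial^\mu$ for $|\mu| \le N$; since $\partial_{t,x}$ derivatives do not preserve the Dirichlet condition, one uses the hierarchy argument: first control $\partial_t^j u$ via the energy identity (Lemma \ref{perturbed energy lemma} applied to the nonlinear perturbation), then recover all $\partial^\mu u'$ via elliptic regularity (Lemma \ref{elliptic regularity lemma} with $M=0$), and use \eqref{local control} near $\bdy$ together with finite speed of propagation / compact support to pass from $u'$ to $u$. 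The resulting differential inequality has the schematic form $\partial_t \mathcal E_N(t) \lesssim \|\gamma'(t,\cdot)\|_\infty \mathcal E_N(t) + (\text{nonlinear source terms})$, and since the nonlinearity vanishes to second order, the source terms are bounded by $C\,\mathcal E_N(t)^{1/2}$ times lower-order quantities — all bounded by $C\,\mathcal E_N(t)$ as long as $\mathcal E_N$ stays small. A continuity (bootstrap) argument on $[0,2]$ then gives $\mathcal E_N(t)^{1/2} \le C\varepsilon$ for $t \in [0,2]$, which is \eqref{local existence}.

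The main obstacle — though a routine one in this literature — is the bookkeeping for the boundary: ensuring that applying $N$ derivatives to the nonlinear solution stays consistent with the Dirichlet condition, and that the compatibility conditions indeed let one bound the initial data $\{\partial_t^j u(0,\cdot)\}_{j\le N}$ in $L^2$ by the spatial data norm \eqref{data}. Once that translation is in place, the Grönwall-type argument on the fixed, short interval $[0,2]$ is immediate since $\int_0^2 \|\gamma'(s,\cdot)\|_\infty\,ds$ and the quadratic source contribute only a fixed multiplicative constant. Because all of this is standard, I would simply cite \cite[Theorems 9.4, 9.5]{KSS2} for the existence and regularity and include at most a sentence indicating that the quantitative bound \eqref{local existence} follows from the energy method together with the compatibility conditions.
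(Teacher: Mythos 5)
Your proposal matches the paper's treatment: the paper proves this theorem simply by citing the standard local existence theory of \cite[Theorems 9.4, 9.5]{KSS2}, which is exactly the citation you lead with, and your sketch of the energy/compatibility bookkeeping is the standard content behind that citation rather than a different route. Nothing further is needed.
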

Using the fact that we have a local solution on $[0,2] \times \ext$, we will now reduce to the case where we have vanishing initial data at the expense of an additional forcing term in our nonlinear equation. This will enable us to avoid dealing with the compatibility conditions on the initial data $(f,g)$ that were mentioned earlier. To do this, we first fix a cutoff in time $\eta \in C^\infty(\R)$ such that $\eta(t) = 1$ for $t < 1/2$ and $\eta(t) = 0$ for $t > 1$. If we set $u_0 = \eta u$, then $u_0$ solves
\[
	\square u_0 = \eta Q (u,u^\prime,u^{\prime \prime}) + [ \square, \eta] u.
\] 
Letting $w = u - u_0$, it is clear that solving our original equation \eqref{main} is equivalent to showing that $w$ solves
\begin{equation} \label{zero data} \left\{
	\begin{array}{l}
		\square w = ( 1 - \eta ) Q (u_0 + w,(u_0 + w)^\prime,(u_0 + w)^{\prime \prime}) - [ \square, \eta] u, \\
		w(t,x) = 0 , \quad x \in \partial \K, \\
		w(0,x) = \partial_t w(0,x) = 0.
	\end{array} \right.
\end{equation}
We solve our new equation \eqref{zero data} using an iteration argument. We set the initial term $w_{0} \equiv 0$. We then recursively define $w_k$ to solve
\begin{equation*} \label{iteration} \left\{
	\begin{array}{l}
		\square w_k = ( 1 - \eta ) Q (u_0 + w_{k-1},(u_0 + w_{k-1})^\prime,(u_0 + w_k)^{\prime \prime}) - [ \square, \eta] u, \\
		w_k(t,x) = 0 , \quad x \in \partial \K, \\
		w_k(0,x) = \partial_t w_k(0,x) = 0.
	\end{array} \right.
\end{equation*}
We first show that our solution is bounded in an appropriate norm. To construct this norm, we fix an integer $N_0$ with the property that
\[
N_0 \geq \dfrac{N_0 + 6D + 61}{2} + 10,
\]
where $D$ is the integer appearing in \eqref{local energy}. This inequality will be used implicitly throughout the iteration argument to control the lower order terms that result from applying the product rule.
For each $k$, we set
\begin{multline} \label{solution norm}
	M_k(T) = \sup_{0 \leq t \leq T} \sum_{|\mu| \leq N_0 + 6D + 60} \norm{\partial^\mu w_k^\prime(t,\cdot)}{2} + \sum_{|\mu| \leq N_0 + 5D + 50} \norm{\left< x \right>^{-3/4} \partial^\mu w_k^\prime}{L^2_{t,x}(S_T)} \\
	+ \sup_{0 \leq t \leq T} \ssum{|\mu| \leq N_0 + 4D + 40}{|\nu| \leq 2} \norm{Z^\mu \partial^\nu w_k(t,\cdot)}{2} + \log(2 + T)^{-1/2} \ssum{|\mu| \leq N_0 + 4D + 40}{|\nu| \leq 1} \norm{\left< x \right>^{-1/2} Z^\mu \partial^\nu w_k}{L^2_{t,x}(S_T)} \\
	+ \sup_{0 \leq t \leq T} \sum_{|\mu|\leq N_0 + 3D + 30} \norm{L \partial^\mu w_k^\prime(t,\cdot)}{2} + \sum_{|\mu| \leq N_0 + 2D + 20} \norm{\left< x \right>^{-3/4} L \partial^\mu w_k^\prime}{L^2_{t,x}(S_T)} \\
	+ \sup_{0 \leq t \leq T} \ssum{|\mu| \leq N_0 + D + 10}{|\nu| \leq 2} \norm{L Z^\mu \partial^\nu w_k(t,\cdot)}{2} + \log(2 + T)^{-1/2} \ssum{|\mu| \leq N_0 + D + 10}{|\nu| \leq 1} \norm{\left< x \right>^{-1/2} L Z^\mu \partial^\nu w_k}{L^2_{t,x}(S_T)} \\
	+ \sup_{0 \leq t \leq T}  \sum_{|\mu| \leq N_0} \norm{ \left< r \right> \left< t - r \right> Z^\mu w_k (t,\cdot)}{\infty} .
\end{multline}
We label the terms in $M_k(T)$ by $ I , II , \ldots , IX.$ Letting $M_0(T)$ equal the above quantity with $w_k$ replaced by $u_0$, if we apply \eqref{local existence} with $N = N_0 + 6D + 62$, then it follows that there is a uniform constant $C_0$ such that
\begin{equation} \label{base case}
	M_0(T) \leq C_0 \epsilon .
\end{equation} 
Moreover, it also follows that
\begin{equation} \label{local solution bound}
\sup_{0 \leq t \leq T} \sum_{|\mu| \leq N_0 + 6D + 62} \norm{\partial^\mu u_0 (t,\cdot)}{2} \leq C_0 \epsilon.
\end{equation}
We wish to show via induction that there is a uniform constant $C_1$ such that
\begin{equation} \label{induction step}
	M_k(T) \leq C_1 \epsilon .
\end{equation}
Letting
\begin{equation} \label{induction hypothesis}
M_{k-1}(T) \leq C_1 \epsilon
\end{equation}
be our induction hypothesis, we will show that this implies \eqref{induction step} for $\epsilon$ taken to be sufficiently small. By induction, this will prove our desired uniform bound.
\par
{\em{\bf{Bound for $I$: }}}
We apply \eqref{perturbed energy ineq} and \eqref{elliptic regularity ineq} with $M = 0$ where the perturbation terms are set as
\begin{equation} \label{term1 perturbation terms}
	\gamma^{\alpha \beta} = - ( 1-  \eta) \left[ b^{\alpha \beta} (u_0 + w_{k-1}) + b^{\alpha \beta \gamma} \partial_\gamma (u_0 + w_{k-1}) \right] .
\end{equation}
By the induction hypothesis \eqref{induction hypothesis} and the bound given to us by the local existence theorem \eqref{local existence}, we know that \eqref{perturbation small} and \eqref{gamma prime small} hold with $\delta = C_1 \epsilon$. Applying Gronwall's inequality to \eqref{perturbed energy ineq} and applying \eqref{lifespan}, we see that an application of \eqref{elliptic regularity ineq} shows that controlling $I$ reduces to proving bounds for
\begin{equation} \label{term1 reduction}
	\int^T_0 \sum_{j \leq N_0 + 6D + 60} \norm{\square_\gamma \partial^j_t w_k(t,\cdot)}{2} \; dt 
	+ \sup_{0 \leq t \leq T} \sum_{|\mu| \leq N_0 + 6D + 59} \norm{\partial^\mu \square w_k(t,\cdot)}{2} .
\end{equation}
It is clear that
\begin{multline} \label{term1 perturbation ineq}
	\sum_{j \leq N_0 + 6D + 60} | \square_\gamma \partial^j_t w_k | 
	\lesssim \sum_{|\mu| \leq N_0} | \partial^\mu ( u_0 + w_{k-1} ) |  \\
	\times \Bigl[ \sum_{|\mu| \leq N_0 + 6D + 60} | \partial^\mu (u_0 + w_k)^\prime | + \sum_{|\mu| \leq N_0 + 6D + 62} | \partial^\mu u_0  | \Bigr] \\
	+ \sum_{|\mu| \leq N_0 - 1} | \partial^\mu ( u_0 + w_{k} )^\prime | \sum_{|\mu| \leq N_0 + 6D + 60} | \partial^\mu ( u_0 + w_{k-1} )^\prime | \\
	+ \sum_{|\mu| \leq N_0 - 1} | \partial^\mu ( u_0 + w_{k-1} )^\prime | \sum_{|\mu| \leq N_0 + 6D + 60} | \partial^\mu ( u_0 + w_{k-1} )^\prime | \\
	+ | u_0 + w_{k-1} |^2 + \sum_{|\mu| \leq N_0 + 6D + 60} | \partial^\mu [ \square , \eta ] u | .
\end{multline}
By \eqref{term1 perturbation ineq} and \eqref{local solution bound}, it follows that the first term in \eqref{term1 reduction} can be controlled using terms $I$, $III$ and $IX$ in \eqref{solution norm}. It follows from \eqref{local existence} that
\begin{multline} \label{term1 controlling gamma term}
	\int^T_0 \sum_{j \leq N_0 + 60 + 6M} \norm{\square_\gamma \partial^j_t w_k(t,\cdot)}{2} \; dt \lesssim ( M_0 (T) + M_k(T)) (M_0(T) + M_{k-1}(T)) \int^T_0 (1 + t)^{-1} \; dt \\
	+ (M_0(T) + M_{k-1}(T))^2 \int^T_0 (1 + t)^{-1} \; dt + \epsilon.
\end{multline}
A similar argument shows that the second term in \eqref{term1 reduction} is also controlled by the right hand side of \eqref{term1 controlling gamma term}. It follows that
\begin{multline}
\label{term1 bound}
	I \leq C ( M_0 (T) + M_{k-1}(T) + M_k(T)) (M_0(T) 
	+ M_{k-1}(T)) \log( 2 + T)  + C_2 \epsilon,
\end{multline}
where $C_2$ is a constant that is chosen to be sufficiently large relative to the size of the constant $C$ in \eqref{local existence}. For the remainder of this proof, we will allow $C_2$ to vary from line to line, but it will be clear from the proof that this constant is independent of important parameters such as $k, \epsilon, T , C_1$.
\par
{\em{\bf{Bound for $II$: }}}
Fix a cutoff $\chi \in C^\infty_c(\R^4)$ such that $\chi(x) = 1 $ for $|x| < 3$ and zero for $|x| > 4$. Fixing the multi-index $\mu$, we will first consider $( 1- \chi) \partial^\mu w_k$. We see that this solves the boundaryless wave equation
\begin{equation} \label{term 2 cutoff equation}
\square (1 - \chi) \partial^\mu w_k = (1 - \chi) \partial^\mu \square w_k - [ \square , \chi ] \partial^\mu w_k  
\end{equation}
with vanishing Cauchy data.
We apply \eqref{main energy ineq} to $(1 - \chi) \partial^\mu w_k$ to see that
\begin{equation} \label{term2 reduction1}
II \lesssim \int^T_0 \sum_{|\mu| \leq N_0 + 5D + 50} \norm{\partial^\mu \square w_k (s,\cdot)}{2} \; ds + \sum_{|\mu| \leq N_0 + 5D + 50} \norm{\partial^\mu w_k^\prime}{L^2_{t,x} ( S_T \cap \{ |x| < 4 \} ) } ,
\end{equation}
where we have also applied \eqref{local control} to the commutator term in \eqref{term 2 cutoff equation}. If we apply \eqref{localized energy local control} to the second term on the right hand side of \eqref{term2 reduction1}, we see that controlling $II$ reduces to bounding
\begin{equation} \label{term2 reduction2}
\int^T_0 \sum_{|\mu| \leq N_0 + 6D + 50} \norm{\partial^\mu \square w_k (s,\cdot)}{2} \; ds + \sum_{|\mu| \leq N_0 + 5D + 49} \norm{\partial^\mu \square w_k}{L^2_{t,x} (S_T ) } .
\end{equation}
Using the fact that
\begin{multline} \label{term2 ineq}
	\sum_{|\mu| \leq N_0 + 6D + 50} | \square \partial^\mu w_k | 
	\lesssim \sum_{|\mu| \leq N_0} | \partial^\mu ( u_0 + w_{k-1} ) |  
\sum_{|\mu| \leq N_0 + 6D + 50} | \partial^\mu (u_0 + w_k)^{\prime \prime} |  \\
	+ \sum_{|\mu| \leq N_0 - 1} | \partial^\mu ( u_0 + w_{k} )^\prime | \sum_{|\mu| \leq N_0 + 6D + 50} | \partial^\mu ( u_0 + w_{k-1} )^\prime | \\
	+ \sum_{|\mu| \leq N_0 - 1} | \partial^\mu ( u_0 + w_{k-1} )^\prime | \sum_{|\mu| \leq N_0 + 6D + 50} | \partial^\mu ( u_0 + w_{k-1} )^\prime | \\
	+ | u_0 + w_{k-1} |^2 + \sum_{|\mu| \leq N_0 + 6D + 50} | \partial^\mu [ \square , \eta ] u | ,
\end{multline}
it follows that the first term in \eqref{term2 reduction2} can be
controlled using \eqref{local existence} and $I$, $III$ and $IX$ in \eqref{solution norm}:
\begin{multline}
\int^T_0 \sum_{|\mu| \leq N_0 + 5D + 51} \norm{\partial^\mu \square w_k(s,\cdot)}{2}\; ds \lesssim ( M_0 (T) + M_k(T)) (M_0(T) + M_{k-1}(T)) \int^T_0 (1 + t)^{-1} \; dt \\
	+ (M_0(T) + M_{k-1}(T))^2 \int^T_0 (1 + t)^{-1} \; dt + \epsilon.
\end{multline}
Since the second term in \eqref{term2 reduction2} can be controlled using a simpler argument, it follows that
\begin{multline} \label{term2 bound}
II \leq C ( M_0 (T) + M_{k-1}(T) + M_k(T)) (M_0(T) + M_{k-1}(T)) \log( 2 + T)  + C_2 \epsilon .
\end{multline}
\par
{\em{\bf{Bound for $III$ and $IV$, $|\nu| = 0$: }}}
To control most of these terms, we will be applying Proposition \ref{u bound prop}. The only case in which this method will not work is the instance where the full number of vector fields  is applied to the terms with second-order derivatives, which does not fit nicely with the weighted $L^2_{t,x}$ spaces that are specified in \eqref{solution norm}. We overcome this difficulty by expressing these terms in divergence form and applying Proposition \ref{divergence form prop}. 
\par
Due to \eqref{local control} 
and the fact that
\[
\norm{Z w_k(t,\cdot)}{L^2_x(\{ x \in \ext : |x| < 4 \} )} \lesssim \norm{w_k^\prime (t,\cdot)}{2}, 
\]
on the set $\{ |x| < 4 \}$, terms $III$ and $IV$ are controlled by $I$ and $II$. Thus, to complete the bound for terms $III$ and $IV$, it suffices to control $( 1 - \chi ) Z^\mu w_k $, where $|\mu| \leq N_0 + 4D + 40$ is fixed and $\chi$ is the same cutoff function that was defined earlier. We note that $( 1 - \chi ) Z^\mu w_k $ solves the boundaryless wave equation,
\begin{equation} \label{terms 3 and 4}
\square ( 1 - \chi) Z^\mu w_k =  (1 - \chi) Z^\mu \square w_k - [\square , \chi] Z^\mu w_k,
\end{equation}
where we note that the second term in the right hand side is supported on the set $\{ |x| < 4 \}$. We further rewrite the first term as
\begin{multline} \label{terms3and4decomposition}
(1 - \chi) Z^\mu \square w_k = (1 - \eta)(1 - \chi) \partial_\alpha \bigl[ (1 - \eta ) (1 - \chi) \bigl( b^{\alpha \beta} (u_0 + w_{k-1}) Z^\mu \partial_\beta (u_0 + w_{k})  \\
 + b^{\alpha \beta \gamma} \partial_\gamma (u_0 + w_{k-1}) Z^\mu \partial_\beta (u_0 + w_{k}) \bigr) \bigr] + G_{k,\mu}(t,x),
\end{multline}
where $ G_{k,\mu}(t,x)$ contains no terms that have more
than a total of $|\mu|+1$ derivatives and vector fields applied to them. Applying \eqref{divergence form} and \eqref{u bound} to the first and second terms in \eqref{terms3and4decomposition}, respectively, and \eqref{cutoff estimate} to the commutator term in the right hand side of \eqref{terms 3 and 4} we see that
\begin{multline} \label{terms3and4 ineq 1}
\sup_{t \in [0,T]} \sum_{|\mu| \leq N_0 + 4D + 40} \norm{(1 - \chi)Z^\mu w_k(t,\cdot)}{2} \\
+ \sum_{|\mu| \leq N_0 + 4D + 40} \log( 2 + T)^{-1/2} \norm{\left< x \right>^{-1/2} (1 - \chi)Z^\mu w_k(t,\cdot)}{L^2_{t,x}(S_T)} \\
 \lesssim 
\int^T_0  \sum_{|\mu|\le N_0 + 4D+40} \norm{ \left< x \right>^{-1} G_{k,\mu}(s,\cdot)}{L^1_r L^2_\omega (|x| > 3)}\; ds \\
 + \int^T_0 \sum_{|\theta| \leq 1} \sum_{|\mu| \leq N_0 + 4D + 40} \norm{\left| \partial^\theta (u_0 + w_{k-1}) \right| \left|Z^\mu (u_0 + w_k)^\prime \right|}{2} \; ds \\
+ \sum_{|\mu| \leq N_0 + 4D + 41} \norm{\partial^\mu w_k^\prime}{L^2_{t,x}(S_T \cap \{ |x| < 4 \})}.
\end{multline}
The last term in the right hand side of \eqref{terms3and4 ineq 1} is controlled by term $II$, whose bounds we established earlier. To control the first term in the right hand side of \eqref{terms3and4 ineq 1}, we see that
\begin{multline}
\sum_{|\mu|\le N_0+4D+40} |G_{k,\mu} | \lesssim \sum_{|\mu| \leq N_0} | Z^\mu ( u_0 + w_{k-1} ) | \sum_{|\mu| \leq N_0 + 4D + 40 } | Z^\mu (u_0 + w_k)^\prime |  \\
	+ \sum_{|\mu| \leq N_0} | Z^\mu ( u_0 + w_{k} )^\prime | \ssum{|\mu| \leq N_0 + 4D + 40}{|\lambda| \leq 1} | Z^\mu \partial^\lambda ( u_0 + w_{k-1} ) | \\
	+ \sum_{|\mu| \leq N_0} | Z^\mu ( u_0 + w_{k-1} ) | \ssum{|\mu| \leq N_0 + 4D + 40}{|\lambda| \leq 1} | Z^\mu \partial^\lambda ( u_0 + w_{k-1} ) | \\
+ \sum_{|\mu| \leq N_0 + 4D + 40} | \partial^\mu [ \square , \eta ] u | .
\end{multline} 
Applying \eqref{local existence}, Cauchy-Schwarz, and Sobolev embedding on $S^3$, we see that
\begin{multline}\label{423}
\int^T_0  \sum_{|\mu|\le N_0 + 4D + 40} \norm{\left< x \right>^{-1} G_{k,\mu} (s,\cdot)}{L^1_r L^2_\omega(|x| > 2)} \; ds \lesssim \epsilon \\
+ \sum_{|\theta| \leq N_0 + 2} \norm{ \left< x \right>^{-1/2} Z^\theta ( u_0 + w_{k-1} ) }{L^2_{t,x}(S_T)}  \sum_{|\theta| \leq N_0 + 4D + 40 } \norm{ \left< x \right>^{-1/2} Z^\theta  (u_0 + w_k)^\prime }{L^2_{t,x}(S_T)}  \\
	+ \sum_{|\theta| \leq N_0 + 2} \norm{ \left< x \right>^{-1/2} Z^\theta ( u_0 + w_{k} )^\prime }{L^2_{t,x}(S_T)}  \ssum{|\theta| \leq N_0 + 4D + 40}{|\lambda| \leq 1} \norm{ \left< x \right>^{-1/2} Z^\theta \partial^\lambda ( u_0 + w_{k-1} ) }{L^2_{t,x}(S_T)}  \\
	+ \sum_{|\theta| \leq N_0 + 2} \norm{ \left< x \right>^{-1/2} Z^\theta  ( u_0 + w_{k-1} )}{L^2_{t,x}(S_T)}  \ssum{|\theta| \leq N_0 + 4D + 40}{|\lambda| \leq 1} \norm{ \left< x \right>^{-1/2} Z^\theta \partial^\lambda ( u_0 + w_{k-1} ) }{L^2_{t,x}(S_T)} .
\end{multline}
Each factor in the right hand side can be bounded in terms of  $\log ( 2 + T )^{1/2} IV$. This implies that the right hand side of \eqref{423} is controlled by
\[
C (M_0(T) + M_{k-1}(T) + M_k(T)) (M_0(T) + M_{k-1}(T)) \log(2 +T ) + C_2 \epsilon.
\] 
The second term in the right hand side of \eqref{terms3and4 ineq 1} can be bounded using simpler arguments similar to those used to bound term $I$. Hence it follows that
\[
III \bigr|_{|\nu| = 0} + IV \bigr|_{|\nu| = 0} \leq C (M_0(T) + M_{k-1}(T) + M_k(T)) (M_0(T) + M_{k-1}(T)) \log(2 +T ) + C_2 \epsilon.
\]
\par
{\em{\bf{Bound for $III$ and $IV$, $|\nu| = 1$: }}}
As with the previous section, we will only concern ourselves with
controlling these terms away from the boundary. We fix the same cutoff
$\chi$ and observe that for $\{ |x| < 4 \}$, the coefficients of $Z$
are bounded. Thus, near the obstacle, we see that these terms can be
controlled by terms $I$ and $II$, and the previously established
bounds for these pieces can be applied.  It then suffices to consider
$(1 - \chi) Z^\mu \partial w_k$, where the multi-index $\mu$ is
fixed. Applying \eqref{energy} and \eqref{main energy ineq}, we see that
\begin{multline} \label{terms3and4 ineq 2} 
	\sup_{t \in [0,T]} \norm{Z^\mu \partial w_k (t,\cdot)}{L^2(|x| > 3)} 
	+ \log (2 + T)^{-1/2} \norm{\left< x \right>^{-1/2} Z^\mu \partial w_k}{L^2_{t,x} (S_T \cap \{ |x| > 3 \})} \\
 	\lesssim \sum_{|\theta| \leq N_0 + 4D + 40} \int^T_0 \norm{Z^\theta \square w_k (s,\cdot)}{2} \; ds 
 	+ \sum_{|\theta| \leq N_0 + 4D + 40} \norm{\partial^\theta w_k^\prime}{L^2_{t,x}(S_T \cap \{ |x| < 4 \} )} .
\end{multline}
As stated earlier, the last term in \eqref{terms3and4 ineq 2} is controlled by term $II$. To control the first term, we apply an argument similar to the one used to bound term $I$ to see that
\begin{multline}
\int^T_0 \sum_{|\theta| \leq N_0 + 4D + 40} \norm{Z^\theta \square w_k(s,\cdot)}{2} \; ds \lesssim\\ \int^T_0 \sum_{|\theta| \leq N_0} \norm{ Z^\theta ( u_0 + w_{k-1} )(s,\cdot) }{\infty}  
	\ssum{|\theta| \leq N_0 + 4D + 40}{|\lambda| = 2}
        \norm{ Z^\theta \partial^\lambda (u_0 + w_k)(s,\cdot) }{2} 
\; ds \\
	+ \int^T_0 \sum_{|\theta| \leq N_0 - 1} \norm{ Z^\theta ( u_0 + w_{k} )^\prime (s,\cdot) }{\infty} \sum_{|\theta| \leq N_0 + 4D + 40} \norm{ Z^\theta ( u_0 + w_{k-1} )^\prime (s,\cdot) }{2} \; ds \\
	+ \int^T_0 \sum_{|\theta| \leq N_0} \norm{ Z^\theta ( u_0 +
          w_{k-1} ) (s,\cdot)}{\infty} \sum_{\substack{|\theta| \leq
            N_0 + 4D + 40\\|\nu|\le 1}} \norm{ Z^\theta \partial^\nu( u_0 + w_{k-1} ) (s,\cdot) }{2} \; ds \\
+ \sup_{t \in [0,2]} \sum_{|\theta| \leq N_0 + 4D + 43} \norm{ \partial^\theta [ \square , \eta ] u (t,\cdot)}{2} .
\end{multline}
Using \eqref{local existence} and terms $III$ and $IX$ in \eqref{solution norm}, we see that the right hand side is controlled by
\[
C (M_0(T) + M_{k-1}(T) + M_k(T)) (M_0(T) + M_{k-1}(T)) \log(2 +T ) + C_2 \epsilon.
\]
\par
{\em{\bf{Bound for $III$, $|\nu| = 2$: }}} 
Just as in controlling term $I$, we choose $\gamma$ as before so that \eqref{perturbation small} and \eqref{gamma prime small} are satisfied with $\delta = C_1 \epsilon$. We then apply \eqref{perturbed energy full vector fields} and integrate in the $t$ variable. After applying \eqref{gamma prime small}, \eqref{lifespan} and Gronwall's inequality, we see that it suffices to control
\begin{equation} \label{terms3and4 ineq 3}
\int^T_0 \ssum{|\mu| \leq N_0 + 4D + 40 }{ |\lambda| = 1} \norm{\square_\gamma Z^\mu \partial^\lambda w_k(s,\cdot)}{2} \; ds + \sum_{|\mu| \leq N_0 + 4D + 42} \norm{\partial^\mu \square w_k}{L^2_{t,x} (S_T \cap \{ |x| < 2 \})} .
\end{equation}
The second term in \eqref{terms3and4 ineq 3} is controlled using the same argument used to bound term $II$. Thus, it remains to control the first term. We see that
\begin{multline} \label{terms3and4 ineq 4}
\ssum{|\mu| \leq N_0 + 4D + 40 }{ |\lambda| = 1} \left| \square_\gamma Z^\mu \partial^\lambda w_k \right| \lesssim \sum_{|\mu| \leq N_0} | Z^\mu ( u_0 + w_{k-1} ) |  \\
	\times \left( \ssum{|\mu| \leq N_0 + 4D + 40 }{|\lambda| \leq 1} | Z^\mu \partial^\lambda (u_0 + w_k)^\prime | + \sum_{|\mu| \leq N_0 + 4D + 43} |Z^\mu u_0| \right) \\
	+ \sum_{|\mu| \leq N_0 - 1} | Z^\mu ( u_0 + w_{k} )^\prime | \ssum{|\mu| \leq N_0 + 4D + 40}{|\lambda| \leq 2} | Z^\mu \partial^\lambda ( u_0 + w_{k-1} ) | \\
	+ \sum_{|\mu| \leq N_0} | Z^\mu ( u_0 + w_{k-1} ) | \ssum{|\mu| \leq N_0 + 4D + 40}{|\lambda| \leq 2} | Z^\mu \partial^\lambda ( u_0 + w_{k-1} ) | \\
+ \sum_{|\mu| \leq N_0 + 4D + 41} | \partial^\mu [ \square , \eta ] u | . 
\end{multline}
By arguments similar to those that were used to obtain \eqref{term1 controlling gamma term}, we can control the first term in \eqref{terms3and4 ineq 3} with \eqref{terms3and4 ineq 4}, \eqref{local existence} and terms $III$ and $IX$ from \eqref{solution norm}. This gives us the bound
\begin{multline} \label{terms3and4 ineq 5}
	\int^T_0 \ssum{|\mu| \leq N_0 + 4D + 40 }{ |\lambda| = 1} \norm{\square_\gamma Z^\mu \partial^\lambda w_k(s,\cdot)}{2} \; ds \\
	\lesssim ( M_0 (T) + M_k(T) + M_{k-1}(T)) (M_0(T) + M_{k-1}(T)) \int^T_0 (1 + t)^{-1} \; dt  + \epsilon.
\end{multline}
From this, it follows that
\begin{multline}
\label{term3 2 derivatives bound}
	III \bigr|_{|\nu| = 2} \leq C ( M_0 (T) + M_{k-1}(T) + M_k(T)) (M_0(T) 
	+ M_{k-1}(T)) \log( 2 + T)  + C_2 \epsilon.
\end{multline}
\par
{\em{\bf{Bound for $V$: }}}
In previous papers, such as \cite{DMSZ} and \cite{MS5}, the authors were able to avoid using $L$ in their iteration arguments. By assuming $\K$ is star-shaped, the authors of \cite{DMSZ} and \cite{MS5} were able to employ localized energy estimates for variable coefficient wave equations enabled them to avoid using $L$ in their existence arguments. However, the proof of these localized estimates relies explicitly on $\K$ being star-shaped to control the boundary terms that arise from integrating by parts. At this time, the authors are not aware of a proof of these estimates that relies solely on weaker geometric assumptions, such as \eqref{local energy}. Instead we follow an approach similar to \cite{HM} and \cite{MS3,MS2}, which relies on \eqref{boundary term ineq}.
\par
We use our elliptic regularity estimate \eqref{elliptic regularity ineq} to see that controlling $V$ reduces to proving bounds for
\[
 \ssum{j + m \leq N_0 + 3D + 31}{m \leq 1} \norm{L^m \partial_t^j w_k^\prime(t,\cdot)}{2} +  \ssum{|\mu| + m \leq N_0 + 3D + 30}{m \leq 1} \norm{L^m \partial^\mu \square w_k(t,\cdot)}{2} ,
\]
for $ 0 \leq t \leq T $. 
Since the energy estimates we have stated in this paper involve 
the modified scaling vector field defined in \eqref{modified scaling vf}, 
we note that the first term is controlled by
\begin{equation} \label{term5 reduction 1}
\sum_{j \leq N_0 + 3D + 30} \norm{\tilde{L} \partial_t^j w_k^\prime(t,\cdot)}{2} + \sum_{|\mu| \leq N_0 + 3D + 31} \norm{\partial^\mu w_k^\prime(t,\cdot)}{2} .
\end{equation}
The bounds for $I$ can be cited to handle the second term in
\eqref{term5 reduction 1}.
It follows that we can
further reduce
 controlling term $V$ to estimating
\begin{equation} \label{term5 reduction 2}
\sum_{j \leq N_0 + 3D + 30} \norm{\tilde{L} \partial_t^j w_k^\prime(t,\cdot)}{2} +  \ssum{|\mu| + m \leq N_0 + 3D + 30}{m \leq 1} \norm{L^m \partial^\mu \square w_k(t,\cdot)}{2}.
\end{equation}
We then let $\gamma$ be as in \eqref{term1 perturbation terms} and
apply \eqref{modified energy ineq} to the first term. After
integrating over the timestrip $[0,T]$ and applying \eqref{gamma prime small} and \eqref{lifespan}, we have \eqref{term5 reduction 2} bounded by
\begin{multline} \label{term5 reduction 3}
\int^T_0 \ssum{j + m \leq N_0 + 3D + 31}{m \leq 1} \Bigl[ \norm{ \tilde{L}^m \partial^j_s \square_\gamma w_k(s,\cdot)}{2}
+ \norm{ \left[ \tilde{L}^m \partial^j_s , \gamma^{\alpha \beta} \partial_\alpha \partial_\beta \right] w_k(s,\cdot)}{2}\Bigr] \; ds \\
+ \int^T_0 \Bigl[ \sum_{j \leq N_0 + 3D + 31} \norm{\partial^j_s \square w_k(s,\cdot)}{2} + \sum_{|\mu| \leq N_0 + 3D + 31} \norm{\partial^\mu w_k^\prime(s,\cdot)}{L^2(|x| < 1)} \Bigr] \; ds \\
+ \sup_{t \in [0,T]} \ssum{|\mu| + m \leq N_0 + 3D + 30}{m \leq 1} \norm{L^m \partial^\mu \square w_k (t,\cdot)}{2}.
\end{multline}
The third term in \eqref{term5 reduction 3} is controlled using the same arguments used to control term $I$. Using the product rule, we see that
\begin{multline} \label{term5 ineq 1}
\ssum{j + m \leq N_0 + 3D + 31}{m \leq 1} \bigl[ \bigl| \tilde{L}^m \partial^j_s \square_\gamma w_k \bigr| 
+ \bigl| \left[ \tilde{L}^m \partial^j_s , \gamma^{\alpha \beta} \partial_\alpha \partial_\beta \right] w_k \bigr| \bigl] 
\lesssim\\ 
\Bigl(\ssum{|\mu| + m \leq N_0 + 3D + 31}{m \leq 1} \bigl|
L^m \partial^\mu (w_k+u_0)^\prime \bigr| + \ssum{|\mu|+m\le
  N_0+3D+33}{m\le 1}
|L^m\partial^\mu u_0|\Bigr) \sum_{|\mu| \leq N_0} \bigl| \partial^\mu (w_{k-1} + u_0) \bigr| \\
+ \ssum{|\mu| + m \leq N_0 + 3D + 31}{m \leq 1}  \bigl| L^m \partial^\mu (w_{k-1} + u_0)^\prime \bigr| \sum_{|\mu| \leq N_0 - 1} \left( \bigl| \partial^\mu (w_{k-1} + u_0) \bigr| + \bigl| \partial^\mu (w_{k} + u_0)^\prime \bigr| \right) \\
+ \ssum{|\mu| + m \leq N_0}{m \leq 1} \bigl| L^m \partial^\mu (w_k + u_0)^\prime \bigr| \sum_{|\mu| \leq N_0 + 3D + 31} \bigl( \bigl| \partial^\mu (w_{k-1} + u_0)^\prime \bigr| \bigr) \\
+ \left( \sum_{|\mu| \leq N_0 + 3D + 31} \bigl( \bigl| \partial^\mu (w_{k-1} + u_0)^\prime \bigr| + \bigl| \partial^\mu (w_{k} + u_0)^\prime \bigr| \bigr) + \sum_{|\mu| \leq N_0 + 3D + 33} \bigl| \partial^\mu u_0 \bigr| \right) \\
\times \ssum{|\mu| + m \leq N_0}{m \leq  1} \bigl| L^m \partial^\mu (w_{k-1} + u_0 ) \bigr|  \\
+ \sum_{m \leq 1} \bigl| L^m (w_{k-1} + u_0 ) \bigr| \bigl|  (w_{k-1} + u_0 ) \bigr| + \ssum{|\mu| + m \leq N_0 + 3D + 31}{m \leq 1 } \bigl| L^m \partial^\mu [\square, \eta] u \bigr| .
\end{multline}
Due to the fact that the initial data are compactly supported, it follows from finite propagation speed that
\[
\ssum{|\mu| + m \leq N_0 + 3D + 31}{m \leq 1 } \bigl| L^m \partial^\mu [\square, \eta] u \bigr| \lesssim \sum_{|\mu| \leq N_0 + 3D + 31} \bigl| \partial^\mu [\square, \eta] u \bigr|
\] 
since the coefficients of $L$ are uniformly bounded on the support of
$[\square, \eta] u$.  A similar statement holds for $u_0$.
\par
The third and fourth terms in the right hand side of \eqref{term5 ineq
  1} can be controlled by summing over dyadic intervals and applying Lemma \ref{sobolev on annuli} to the lower order terms. For more details on this computation, the reader can see \cite{KSS}. Taking the $L^2_x$-norm of these terms, we see that they are controlled by
\begin{multline}
\ssum{|\mu| + m \leq N_0 + 3}{m \leq 1} \norm{\left< x \right>^{-3/4} L^m Z^\mu (w_k + u_0)^\prime }{2} \sum_{|\mu| \leq N_0 + 3D + 31}  \norm{\left< x \right>^{-3/4} \partial^\mu (w_{k-1} + u_0)^\prime }{2} \\
+ \ssum{|\mu| + m \leq N_0 + 3}{m \leq  1} \norm{\left< x \right>^{-1/2} L^m Z^\mu (w_{k-1} + u_0 ) }{2} \times \\
\left(  \sum_{|\mu| \leq N_0 + 3D + 31} \bigl( \norm{\left< x \right>^{-3/4} \partial^\mu (w_{k-1} + u_0)^\prime}{2} + \norm{\left< x \right>^{-3/4} \partial^\mu (w_{k} + u_0)^\prime }{2} \bigr) + \sum_{|\mu| \leq N_0 + 3D + 33} \norm{\partial^\mu u_0}{2} \right).
\end{multline}
Integrating in the $t$ variable and applying Cauchy-Schwarz, we see
that these terms are controlled using \eqref{local solution bound},  $II$, and $\log ( 2 + T )^{1/2} VIII$ in \eqref{solution norm}.
The first two terms in the right hand side of \eqref{term5 ineq 1} can be controlled using terms $ V$ and $IX$.
The second-to-last term can be controlled using terms $VII$ and $IX$. 
It follows that
\begin{multline} \label{term 5 control 1}
\int^T_0 \ssum{j + m \leq N_0 + 3D + 31}{m \leq 1} \bigl[ \norm{ \tilde{L}^m \partial^j_s \square_\gamma w_k(s,\cdot)}{2}
+ \norm{ \left[ \tilde{L}^m \partial^j_s , \gamma^{\alpha \beta} \partial_\alpha \partial_\beta \right] w_k(s,\cdot)}{2}\bigr] \; ds \\
\leq C  (M_0(T) + M_{k-1}(T) + M_{k}(T))(M_0(T) + M_{k-1}(T)) \log(2 + T)^{1/2} + C_2 \epsilon.
\end{multline}
Note that $ \begin{displaystyle} \ssum{|\mu| + m \leq N_0 + 3D + 30}{m \leq 1} \bigl| L^m \partial^\mu \square w_k \bigr|  \end{displaystyle} $ is also controlled by the right hand side of \eqref{term5 ineq 1}. It follows that the last term in \eqref{term5 reduction 3} can be controlled using terms $I$ and $V$ in \eqref{solution norm} via Sobolev embedding. Arguing in this manner, we see that this term can be controlled without a logarithmic loss. The third-to-last term in \eqref{term5 reduction 3} can be bounded in the same manner as term $I$. To handle the second-to-last term in \eqref{term5 reduction 3}, we apply \eqref{boundary term ineq} to see that it is bounded by
\begin{equation} \label{term 5 ineq 2}
\int^T_0 \sum_{|\mu| \leq N_0 + 4D + 32} \norm{\partial^\mu \square w_k (s,\cdot)}{2} \; ds + \int^T_0 \int_{\ext} \sum_{|\mu| \leq N_0 + 3D + 36} \left| Z^\mu \square w_k (s,y) \right| \; \dfrac{dy \; ds}{|y|^{3/2}} .
\end{equation} 
The first term can be controlled in a manner similar to the argument that was used to bound term $I$. Applying the product rule, we see that 
\begin{multline} \label{term5 ineq 2}
\sum_{|\mu| \leq N_0 + 3D + 36} \left| Z^\mu \square w_k (s,y) \right| \lesssim  \sum_{|\mu| \leq N_0} \bigl| Z^\mu (w_{k-1} + u_0 ) \bigr|  \times \\
\bigl( \ssum{|\mu| \leq N_0 + 3D + 36}{|\nu| \leq 1} \bigl| Z^\mu \partial^\nu (w_{k-1} + u_0) \bigr| + \ssum{|\mu| \leq N_0 + 3D + 36}{|\nu| = 2} \bigl| Z^\mu \partial^\nu (w_{k} + u_0) \bigr| \bigr) \\
 + \sum_{|\mu| \leq N_0 + 3D + 36} \bigl| \partial^\mu [\square, \eta] u \bigr| .
\end{multline}
The last term can be controlled using \eqref{local solution bound}. Thus, by \eqref{term5 ineq 2} and Cauchy-Schwarz, we see that the second term in \eqref{term 5 ineq 2} is controlled by
\begin{multline} \label{term 5 ineq 3} 
\sum_{|\mu| \leq N_0} \norm{ \left< x \right>^{-1/2} Z^\mu (w_{k-1} + u_0 )}{L^2_{t,x}(S_T)}  \Bigl( \ssum{|\mu| \leq N_0 + 3D + 36}{|\nu| \leq 1} \norm{ \left< x \right>^{-1/2} Z^\mu \partial^\nu (w_{k-1} + u_0) }{L^2_{t,x} (S_T)} \\
+ \ssum{|\mu| \leq N_0 + 3D + 36}{|\nu| = 2} \norm{ \left< x \right>^{-1/2} Z^\mu \partial^\nu (w_{k} + u_0) }{L^2_{t,x}(S_T)} \Bigr) + \epsilon.
\end{multline}
Since each factor in \eqref{term 5 ineq 3} is controlled by $\log ( 2 + T)^{1/2} IV$ in \eqref{solution norm}, we see that
\begin{multline} \label{term 5 ineq 4}
 \int^T_0 \int_{\ext}  \sum_{|\mu| \leq N_0 + 3D + 36} \left| Z^\mu \square w_k (s,y) \right| \; \dfrac{dy \; ds}{|y|^{3/2}} \\
 \leq C (M_0(T) + M_{k-1}(T) + M_k(T))(M_0(T) + M_{k-1}(T))\log(2+T) + C_2 \epsilon.
\end{multline}
From this and previous arguments, we conclude that
\[
V \leq C (M_0(T) + M_{k-1}(T) + M_k(T))(M_0(T) + M_{k-1}(T))\log(2+T) + C_2 \epsilon.
\]
\par
{\em{\bf{Bound for $VI$: }}}
Controlling terms $VI, VII$ and $VIII$ will be conducted in a manner
similar to the analogous terms where no $L$ is being applied. In the
case of term $VI$, we apply a cutoff that is supported away from the
obstacle $\K$ to $L \partial^\mu w_k$ and apply \eqref{main energy
  ineq}. For the remaining term that is supported near $\K$ and the
commutator term that results from the cutoff we apply 
\eqref{localized energy local control}. Thus, it suffices for us to control
\begin{equation} \label{term 6 reduction}
\int^T_0 \ssum{|\mu| + m \leq N_0 + 3D + 21}{m \leq 1} \norm{L^m \partial^\mu \square w_k (s,\cdot)}{2} \; ds + \ssum{|\mu| + m \leq N_0 + 2D + 20}{m \leq 1} \norm{L^m \partial^\mu \square w_k}{L^2_{t,x}(S_T )} .
\end{equation}
One only needs to demonstrate control for the first term in the above expression since the second term can be controlled in a simpler manner by applying Sobolev embedding to the lower order terms. Applying the product rule, we see that
\begin{multline} \label{term 6 ineq1}
\ssum{|\mu| + m \leq N_0 + 3D + 21}{m \leq 1} \bigl| L^m \partial^\mu \square w_k \bigr| \lesssim \ssum{|\mu| + m \leq N_0 + 3D + 22}{m \leq 1} \bigl| L^m \partial^\mu (w_k + u_0)^\prime \bigr| \sum_{|\mu| \leq N_0} \bigl| \partial^\mu (w_{k-1} + u_0) \bigr| \\
 + \ssum{|\mu| + m \leq N_0 + 3D + 21}{m \leq 1} \bigl| L^m \partial^\mu (w_{k-1} + u_0)^\prime \bigr| \sum_{|\mu| \leq N_0 - 1} \bigl( \bigl| \partial^\mu (w_{k} + u_0)^\prime \bigr| + \bigl| \partial^\mu (w_{k-1} + u_0) \bigr| \bigr) \\
+ \sum_{|\mu| \leq N_0 + 3D + 22} \bigl| \partial^\mu (w_k + u_0)^\prime \bigr| \ssum{|\mu| + m \leq N_0}{m \leq 1} \bigl| L^m \partial^\mu (w_{k-1} + u_0) \bigr| \\
 + \sum_{|\mu| \leq N_0 + 3D + 21} \bigl| \partial^\mu (w_{k-1} + u_0)^\prime \bigr| \ssum{|\mu| + m\leq N_0}{m \leq 1} \bigl( \bigl| L^m \partial^\mu (w_{k} + u_0)^\prime \bigr| + \bigl| L^m \partial^\mu (w_{k-1} + u_0) \bigr| \bigr) \\
 + \bigl| w_{k-1} + u_0 \bigr| \sum_{m \leq 1} \bigl| L^m (w_{k-1} + u_0 ) \bigr| + \sum_{|\mu| \leq N_0 + 3D + 21} \bigl| \partial^\mu [\square, \eta] u \bigr| .
\end{multline}
Upon taking the $L^1([0,T];L^2(\ext))$-norm, we bound the
corresponding terms in a manner that is reminiscent of the above arguments.
Where $L$ is being applied to the higher order terms, we use terms $V$ and $IX$ of \eqref{solution norm}. The second-to-last term in \eqref{term 6 ineq1} can be controlled using terms $VII$ and $IX$. The last term in \eqref{term 6 ineq1} can be controlled using \eqref{local existence}. The remaining terms can be controlled by decomposing dyadically in the $x$ variable, applying \eqref{sobolev on annuli}, and applying Cauchy-Schwarz in $t$ and in the dyadic summation variable. This results in quantities that can be controlled by $\log ( 2 + T)^{1/2} IV$ and $\log ( 2 + T)^{1/2} VIII$.
Therefore, we see that
\[
VI \leq C (M_0(T) + M_k(T) + M_{k-1}(T)) (M_0(T) + M_{k-1}(T)) \log (2 + T) + C_2 \epsilon .
\]
\par
{\em{\bf{Bound for $VII$ and $VIII$, $|\nu| = 0$: }}}
We closely follow the preceding argument that was used to control
$III$ and $IV$, $ |\nu| = 0$. Fix $\chi$ just as before. The Dirichlet
boundary conditions and the boundedness of the coefficients of $Z$ on
the boundary of $\K$ demonstrate that $ \chi L Z^\mu w_k$ 
is bounded by terms $V$ and $VI$.
It, thus, suffices to prove bounds for $(1 - \chi) L Z^\mu w_k$ for $|\mu| \leq N_0 + D + 10$ fixed. We write
\begin{multline}
\square (1 - \chi) L Z^\mu w_k = (1 - \eta)(1 - \chi) \partial_\alpha \bigl[ (1 - \eta ) (1 - \chi) \bigl( b^{\alpha \beta} (u_0 + w_{k-1}) L Z^\mu \partial_\beta (u_0 + w_{k})  \\
 + b^{\alpha \beta \gamma} \partial_\gamma (u_0 + w_{k-1}) L Z^\mu \partial_\beta (u_0 + w_{k}) \bigr) \bigr] - [\square, \chi] L Z^\mu w_k + \tilde{G}_{k,\mu}(t,x) .
\end{multline}
Applying \eqref{divergence form}, \eqref{cutoff estimate} and \eqref{u bound}, we see that
\begin{multline} \label{terms7and8 ineq1}
\sup_{0 \leq t \leq T} \norm{ (1 - \chi) L Z^\mu w_k (t,\cdot)}{2} + \log(2 + T)^{-1/2} \norm{( 1 - \chi) \left< x \right>^{-1/2} L Z^\mu w_k }{L^2_{t,x}(S_T)} \\
\lesssim \int^T_0 \norm{\left< x \right>^{-1} \tilde{G}_{k,\mu} (s,\cdot) }{L^1_r L^2_\omega(|x| > 2)} \; ds \\
+ \int^T_0 \sum_{|\theta| \leq N_0 + D + 10} \sum_{|\lambda| \leq 1} \norm{|L Z^\theta (w_k + u_0 )^\prime| |\partial^\lambda (w_{k-1} + u_0)|}{2} \; ds \\
+ \ssum{|\theta| + m \leq N_0 + D + 11}{m \leq 1} \norm{L \partial^\theta w_k^\prime}{L^2_{t,x}(S_T \cap \{ |x| < 4 \})} .
\end{multline}
The last term is controlled by term $VI$. Using terms $VII$ and $IX$, the second-to-last term is controlled by
\[
(M_0(T) + M_{k-1}(T))(M_0(T) + M_k(T)) \log( 2 + T ) .
\]
The first term in \eqref{terms7and8 ineq1} can be controlled using a rough application of the product rule where we allow for at most one occurrence of the scaling vector field on each term. Applying Sobolev embedding on $S^3$ and Cauchy-Schwarz, we see that
\begin{multline*} 
\int^T_0 \norm{\left< x \right>^{-1} \tilde{G}_{k,\mu} (s,\cdot) }{L^1_r L^2_\omega(|x| > 2)} \; ds \lesssim \epsilon \\
+ \ssum{|\theta| + m \leq N_0 + D + 11}{m \leq 1 }\norm{\left< x \right>^{-1/2} L^m Z^\theta  (w_k + u_0)^\prime}{L^2_{t,x}(S_T)} \ssum{|\theta| + m \leq N_0 + 2}{m \leq 1 }\norm{\left< x \right>^{-1/2} L^m Z^\theta  (w_{k-1} + u_0)}{L^2_{t,x}(S_T)} \\
+ \ssum{|\theta| + m \leq N_0 + D + 11}{m \leq 1 \\ |\lambda| \leq 1}\norm{\left< x \right>^{-1/2} L^m Z^\theta \partial^\lambda (w_{k-1} + u_0)}{L^2_{t,x}(S_T)} \ssum{|\theta| + m \leq N_0 + 2}{m \leq 1 }\norm{\left< x \right>^{-1/2} L^m Z^\theta  (w_{k-1} + u_0)}{L^2_{t,x}(S_T)} \\
+ \ssum{|\theta| + m \leq N_0 + 2}{m \leq 1 }\norm{\left< x \right>^{-1/2} L^m Z^\theta (w_k + u_0)^\prime}{L^2_{t,x}(S_T)} \ssum{|\theta| + m \leq N_0 + D + 11}{m \leq 1 \\ |\lambda| \leq 1}\norm{\left< x \right>^{-1/2} L^m Z^\theta \partial^\lambda (w_{k-1} + u_0)}{L^2_{t,x}(S_T)} .
\end{multline*}
Using terms $IV$ and $VIII$, it follows that
\begin{multline} \label{terms7and8 remainder term}
\int^T_0 \norm{\left< x \right>^{-1} \tilde{G}_{k,\mu} (s,\cdot) }{L^1_r L^2_\omega(|x| > 2)} \; ds \\
\lesssim \epsilon + (M_0(T) + M_{k-1}(T) + M_k(T))(M_0(T) + M_{k-1}(T)) \log(2 + T),
\end{multline}
and therefore
\[
VII \bigr|_{|\nu| = 0}  + VIII \bigr|_{|\nu| = 0} \leq C  (M_0(T) + M_{k-1}(T) + M_k(T))(M_0(T) + M_{k-1}(T)) \log(2 + T) + C_2 \epsilon .
\]
\par
{\em{\bf{Bound for $VII$ and $VIII$, $|\nu| = 1$: }}}
Again, we need only provide bounds for $( 1 - \chi ) L Z^\mu
w_k^\prime$ since $\chi L Z^\mu w_k^\prime$ can be controlled using
terms $V$ and $VI$. Applying \eqref{energy} and \eqref{main energy ineq}, we see that
\begin{multline}
\sup_{0 \leq t \leq T} \sum_{|\mu| \leq N_0 + D + 10} \norm{(1 - \chi) L Z^\mu w_k^\prime (t,\cdot)}{2} + \sum_{|\mu| \leq N_0 + D + 10} \norm{(1 - \chi) \left< x \right>^{-3/4} L Z^\mu w_k^\prime}{L^2_{t,x}(S_T)} \\
 \lesssim \int^T_0 \ssum{|\mu| + m \leq N_0 + D + 11}{m \leq 1} \norm{L^m Z^\mu \square w_k(s,\cdot)}{2} \; ds + \ssum{|\mu| + m \leq N_0 + D + 11}{m \leq 1} \norm{L^m \partial^\mu w_k^\prime}{L^2_{t,x}([0,T] \times \{|x| < 4 \})} .
\end{multline}
The second term in the right hand side is controlled by term $VI$. The first term can be handled by carefully applying the product rule. We see that
\begin{multline} \label{terms7and8 ineq2}
\ssum{|\mu| + m \leq N_0 + D + 11}{m \leq 1} \bigl| L^m Z^\mu \square w_k \bigr| \lesssim  \ssum{|\mu| + m \leq N_0 + D + 11}{m \leq 1 \\ |\theta| = 2} \bigl| L^m Z^\mu \partial^\theta (w_k + u_0) \bigr|  \sum_{|\mu| \leq N_0} \bigl| Z^\mu (w_{k-1} + u_0) \bigr| \\
+ \ssum{|\mu| \leq N_0 + D + 11}{|\theta| = 2} \bigl| Z^\mu \partial^\theta (w_k + u_0) \bigr| \ssum{m + |\mu| \leq N_0}{m \leq 1} \bigl| L^m Z^\mu (w_{k-1} + u_0) \bigr| \\
+ \ssum{|\mu| + m \leq N_0 + D + 11}{m \leq 1 \\ |\theta| \leq 1} \bigl| L^m Z^\mu \partial^\theta (w_{k-1} + u_0) \bigr| \ssum{|\mu| + m \leq N_0}{m \leq 1} \left( \bigl| L^m Z^\mu (w_{k-1} + u_0) \bigr| + \bigl| L^m Z^\mu (w_k + u_0)^\prime \bigr| \right) \\
+ \ssum{|\mu| + m  \leq N_0 + D + 11}{m \leq 1} \bigl| L^m Z^\mu [\square, \eta] u \bigr|.
\end{multline}
By finite propagation speed and the fact that the initial data are compactly supported the coefficients of $L$ and $Z$ are bounded uniformly on the support of $[\square, \eta] u$. Thus, the last term in \eqref{terms7and8 ineq2} is handled using \eqref{local existence}.
Taking the $L^1([0,T] ; L^2(\ext))$-norm of the first term of the
right hand side, we see that it is controlled by terms $III$, $VII$, and $IX$:
\begin{multline}
\int^T_0 \ssum{|\mu| + m \leq N_0 + D + 11}{m \leq 1 \\ |\theta| = 2} \norm{ L^m Z^\mu \partial^\theta (w_k + u_0) (s,\cdot)}{2} \sum_{|\mu| \leq N_0} \norm{ Z^\mu (w_{k-1} + u_0) (s,\cdot)}{\infty} \; ds \\
\leq C (M_k(T) + M_0(T))(M_{k-1}(T) + M_0(T)) \log(2 + T) .
\end{multline}
To handle the remaining terms, we decompose dyadically and apply \eqref{sobolev on annuli} to the lower order pieces. We see that they can be controlled by
\begin{multline}
\ssum{|\mu| \leq N_0 + D + 11}{|\theta| = 2} \norm{ \left< x \right>^{-3/4} Z^\mu \partial^\theta (w_k + u_0) }{L^2_{t,x}(S_T)} \ssum{m + |\mu| \leq N_0 + 3}{m \leq 1} \norm{ \left< x \right>^{-1/2} L^m Z^\mu (w_{k-1} + u_0) }{L^2_{t,x}(S_T)} \\
+ \ssum{|\mu| + m \leq N_0 + D + 11}{m \leq 1 \\ |\theta| \leq 1} \norm{ \left< x \right>^{-1/2} L^m Z^\mu \partial^\theta (w_{k-1} + u_0) }{L^2_{t,x}(S_T)} \ssum{|\mu| + m \leq N_0 + 3}{m \leq 1} \norm{ \left< x \right>^{-1/2} L^m Z^\mu (w_{k-1} + u_0) }{L^2_{t,x}(S_T)} \\
+ \ssum{|\mu| + m \leq N_0 + 3}{m \leq 1} \norm{ \left< x \right>^{-3/4} L^m Z^\mu (w_k + u_0)^\prime }{L^2_{t,x}(S_T)} \ssum{|\mu| + m \leq N_0 + D + 11}{m \leq 1 \\ |\theta| \leq 1} \norm{ 
\left< x \right>^{-1/2} L^m Z^\mu \partial^\theta (w_{k-1} + u_0) }{L^2_{t,x}(S_T)} .
\end{multline}
Since all the factors in the above expression are controlled by either $\log ( 2 + T ) ^{1/2} IV$ or $\log ( 2 + T )^{1/2} VIII$, these terms are bounded by
\[
C (M_{k-1} + M_k(T) + M_0(T))(M_{k-1}  + M_0(T)) \log( 2 + T).
\]
Therefore, we see that
\[
VII \bigr|_{|\nu| = 1} + VIII\bigr|_{|\nu| = 1} \leq C (M_{k-1} + M_k(T) + M_0(T))(M_{k-1}  + M_0(T)) \log( 2 + T) + C_2 \epsilon .
\]
\par
{\em{\bf{Bound for $VII$, $|\nu| = 2$: }}}
We let $\gamma$ be as before in \eqref{term1 perturbation
  terms}. Letting $\delta = C_1 \epsilon$, we apply \eqref{perturbed
  energy full vector fields}. Integrating the resulting inequality
over the timestrip $[0,T]$ and applying \eqref{gamma prime small} and \eqref{lifespan}, we see that $VII \bigr|_{|\nu| = 2}$ is controlled by
\begin{multline} \label{terms7and8 ineq3}
\int^T_0 \ssum{|\mu| + m \leq N_0 + D + 11 }{m \leq 1 \\ |\lambda| = 1} \norm{\square_\gamma L^m Z^\mu \partial^\lambda w_k(s,\cdot)}{2} \; ds + \ssum{|\mu| + m \leq N_0 + D + 13}{m \leq 1} \norm{L^m \partial^\mu \square w_k}{L^2_{t,x} (S_T \cap \{ |x| < 2 \})} .
\end{multline}
The second term in \eqref{terms7and8 ineq3} is controlled using an argument similar to the one used to bound term $VI$. To control the integrand inside the first term, we apply the product rule to see that
\begin{multline} \label{terms7and8 ineq4}
\ssum{|\mu| + m \leq N_0 + D + 11}{m \leq 1 \\ |\lambda| = 1} \bigl| \square_\gamma L^m Z^\mu \partial^\lambda w_k \bigr| \\
\lesssim \bigl( \ssum{|\mu| + m \leq N_0 + D + 11}{m \leq 1 \\ |\lambda| = 2} \bigl| L^m Z^\mu \partial^\lambda (w_k + u_0 ) \bigr| + \sum_{|\mu| \leq N_0 + D + 14} \bigl| \partial^\mu u_0 \bigr| \bigr) \sum_{|\mu| \leq N_0} \bigl| Z^\mu (w_{k-1} + u_0 ) \bigr| \\
+ \ssum{|\mu| + m \leq N_0 + D + 11}{m \leq 1 \\ |\lambda| \leq 2} \bigl| L^m Z^\mu \partial^\lambda (w_{k-1} + u_0 ) \bigr| \sum_{|\mu| \leq N_0 - 1} \bigl( \bigl| Z^\mu (w_k + u_0 )^\prime \bigr| + \bigl| Z^\mu (w_{k-1} + u_0 ) \bigr| \bigr) \\
+ \bigl( \ssum{|\mu| \leq N_0 + D + 11}{ |\lambda| = 2} \bigl| Z^\mu \partial^\lambda (w_k + u_0 ) \bigr| + \sum_{|\mu| \leq N_0 + D + 14} \bigl| \partial^\mu u_0 \bigr| \bigr) \ssum{|\mu| + m \leq N_0}{m \leq 1} \bigl| L^m Z^\mu (w_{k-1} + u_0 ) \bigr| \\
+ \ssum{|\mu| \leq N_0 + D + 11}{|\lambda| \leq 2} \bigl| Z^\mu \partial^\lambda (w_{k-1} + u_0 ) \bigr|  \ssum{|\mu| + m \leq N_0}{m \leq 1} \left( \bigl| L^m Z^\mu (w_k + u_0 )^\prime \bigr| +  \bigl| L^m Z^\mu (w_{k-1} + u_0 ) \bigr| \right) \\
+ \ssum{|\mu| + m \leq N_0 + D + 12}{m \leq 1} \bigl| L^m Z^\mu [\square, \eta] u \bigr| .
\end{multline}
By finite propagation speed and the fact that the Cauchy data are compactly supported, the coefficients of $L$ and $Z$ are bounded on the support of $[\square, \eta]u$. 
\par
The $L^1([0,T]; L^2(\ext))$-norm of the second term in \eqref{terms7and8 ineq4} is controlled by
\begin{multline}
\int^T_0  \ssum{|\mu| + m \leq N_0 + D + 11}{m \leq 1 \\ |\lambda| = 2} \norm{ L^m Z^\mu \partial^\lambda (w_{k-1} + u_0 ) (s,\cdot)}{2} \times \\
\sum_{|\mu| \leq N_0} \bigl(  \norm{ Z^\mu (w_{k} + u_0 )(s,\cdot) }{\infty} + \norm{ Z^\mu (w_{k-1} + u_0 )(s,\cdot) }{\infty} \bigr) \; ds, 
\end{multline}
which can be handled using $III$, $VII$ and $IX$. The first term in the right hand side of \eqref{terms7and8 ineq4} can be handled in an analogous manner. The third and fourth terms can be handled using a dyadic decomposition in the $x$ variable followed by an application of Lemma \ref{sobolev on annuli}. We will illustrate this method by bounding the fourth term. Taking the $L^2_x$-norm of this term, we see that they can be controlled by
\begin{multline}
 \ssum{|\mu| \leq N_0 + D + 11}{|\lambda| \leq 2} \norm{ \left< x \right>^{-1/2} Z^\mu \partial^\lambda (w_{k-1} + u_0 ) }{2} \times \\
\Bigl( \ssum{|\mu| + m \leq N_0 + 3}{m \leq 1} \norm{ \left< x \right>^{-3/4} L^m Z^\mu (w_k + u_0 )^\prime }{2} + \ssum{|\mu| + m \leq N_0 + 3}{m \leq 1} \norm{ \left< x \right>^{-1/2} L^m Z^\mu (w_{k-1} + u_0 ) }{2} \Bigr) .
\end{multline}
Integrating in time and applying Cauchy-Schwarz, we see that this is controlled by $\log ( 2 + T)^{1/2} IV$ and $\log ( 2 + T)^{1/2} VIII$. The third term in the right hand side of \eqref{terms7and8 ineq4} can be controlled in a similar manner. From these arguments, we see that
\begin{multline*}
\int^T_0 \ssum{|\mu| + m \leq N_0 + D + 11 }{m \leq 1 \\ |\lambda| = 1} \norm{\square_\gamma L^m Z^\mu \partial^\lambda w_k(s,\cdot)}{2} \; ds \\
\leq C (M_k(T) + M_{k-1}(T) + M_0(T))( M_{k-1}(T) + M_0(T))\log(2+T) + C_2 \epsilon .
\end{multline*}
Therefore, we conclude that
\[
VII \bigr|_{|\nu| = 2} \leq C (M_k(T) + M_{k-1}(T) + M_0(T))( M_{k-1}(T) + M_0(T)) \log ( 2 + T ) + C_2 \epsilon .
\]
\par
{\em{\bf{Bound for $IX$: }}}
Applying \eqref{klainerman sideris no gradient ext domain} and \eqref{boundary term ineq}, we see that
\begin{multline} \label{term9 main ineq}
IX \lesssim 
\sup_{t \in [0,T]} \left< t \right> \sum_{|\mu| \leq N_0 + 3} \norm{|x|^{-1} Z^\mu \square w_k (t,\cdot)}{L^1_r L^2_\omega (|x| > 2)} \\ 
 + \int^T_0 \ssum{|\mu| + m \leq N_0 + D + 4}{m \leq 1} \norm{ L^m \partial^\mu  \square w_k (s,\cdot)}{2} \; ds \\
 + \int^T_0 \ssum{|\mu| + m \leq N_0 + 3}{m \leq  1} \norm{|x|^{-1} L^m Z^\mu \square w_k (s,\cdot)}{L^1_r L^2_\omega (|x| > 2)} \; ds \\
 + \int^T_0 \int_{\ext} \sum_{|\mu| \leq N_0 +D+ 8} \left| Z^\mu \square w_k (s,y) \right| \; \dfrac{dy \; ds}{|y|^{3/2}} .
\end{multline}
We see that the integrand in the second term in the right hand side of
\eqref{term9 main ineq} is controlled by the right hand side of
\eqref{term 6 reduction}.  Thus, it follows from the same argument
used to control \eqref{term 6 reduction} that this term is bounded by
$C (M_k(T) + M_{k-1}(T) + M_0(T))(M_{k-1}(T) + M_0(T)) \log (2 + T) +
C_2 \epsilon$. We also see that the third and fourth terms are
controlled by the right hand sides of \eqref{terms7and8 remainder
  term} and \eqref{term 5 ineq 4}, respectively. Therefore, it
suffices to demonstrate how to control the first term in the right hand side of \eqref{term9 main ineq}.

To deal with the first term, we see that it is controlled by
\begin{multline} \label{second term split}
\sup_{t \in [0,T]} \left< t \right> \sum_{|\mu| \leq N_0 + 3} \int_2^{\max(2,t/2)} \left( \int_{S^3} \left| Z^\mu \square w_k (t,r \omega) \right|^2 \; d\omega \right)^{1/2} \; r^2 \; dr \\ 
+ \sup_{t \in [0,T]} \sum_{|\mu| \leq N_0 + 3} \int_{\max(2,t/2)}^\infty \left( \int_{S^3} \dfrac{\left< t \right>}{r} \left| Z^\mu \square w_k (t,r \omega) \right|^2 \; d\omega \right)^{1/2} \; r^3 \; dr .
\end{multline}
Due to the fact that $t \lesssim r$ and $r > 2$ on the region of integration for the second integral in \eqref{second term split}, we see that this term is controlled by
\begin{equation} \label{second term2}
\sup_{t \in [0,T]} \sum_{|\mu| \leq N_0 + 3} \norm{ Z^\mu \square w_k (t, \cdot )}{L^1_r L^2_\omega (|x| > 2)}  .
\end{equation}
Applying Cauchy-Schwarz and Sobolev embedding on $S^3$, the above quantity can be controlled using term $III$ and \eqref{local existence}. Thus, we see that \eqref{second term2} is bounded by
\[
C (M_k(T) + M_{k-1}(T) + M_0 (T)) (M_{k-1}(T) + M_0(T)) + C_2 \epsilon .
\]
To control the first integral in \eqref{second term split}, we apply Cauchy-Schwarz in the $r$ variable to see that
\begin{multline} \label{first integral ineq}
\sup_{t \in [0,T]} \left< t \right> \sum_{|\mu| \leq N_0 + 3} \int_2^{\max(2,t/2)} \left( \int_{S^3} \left| Z^\mu \square w_k (s,r \omega) \right|^2 \; d\omega \right)^{1/2} \; r^2 \; dr \\
\lesssim \log ( 2 + T )^{1/2} \sup_{t \in [0,T]} \left< t \right> \sum_{|\mu| \leq N_0 + 3} \norm{r Z^\mu \square w_k(t,\cdot)}{L^2_x( \{ x \in \ext : 2 < |x| < \max(2, t/2) \} )} .
\end{multline}
Note that for $v_1,v_2 \in C^\infty (\R \times \ext ),$
\[
\left< t \right> \norm{r v_1(t,\cdot) v_2(t,\cdot)}{L^2_x( \{ x \in \ext : 2 < |x| < \max(2, t/2) \} )} \lesssim \norm{\left< r \right> \left< t - r \right> v_1 (t,\cdot)}{\infty} \norm{v_2(t,\cdot)}{2} .
\] 
From this observation and the fact that
\begin{multline}
	\sum_{|\mu| \leq N_0 + 3} | Z^\mu \square w_k | 
	\lesssim \sum_{|\mu| \leq N_0} \bigl| Z^\mu ( w_{k-1} + u_0 ) \bigr|  \ssum{|\mu| \leq N_0 + 3}{|\nu | = 2} \bigl| Z^\mu \partial^\nu (w_k + u_0) \bigr|  \\
	+ \sum_{|\mu| \leq N_0 - 1} \bigl| Z^\mu ( u_0 + w_{k} )^\prime \bigr| \sum_{|\mu| \leq N_0 + 3} \bigl| Z^\mu (  w_{k-1} + u_0 )^\prime \bigr| \\
	+ \sum_{|\mu| \leq N_0} \bigl| Z^\mu ( u_0 + w_{k-1} ) \bigr| \sum_{|\mu| \leq N_0 + 4} \bigl| Z^\mu (  w_{k-1} + u_0 ) \bigr| \\
+ \sum_{|\mu| \leq N_0 + 3} \bigl| Z^\mu [ \square , \eta ] u \bigr| ,
\end{multline}
we use terms $III$ and $IX$ in \eqref{solution norm} to see that the right hand side of \eqref{first integral ineq} is controlled by
\begin{equation*}
 (M_0(T) + M_k(T) + M_{k-1}(T))(M_0(T) + M_{k-1}(T))\log(2+T)^{1/2} + \epsilon.
\end{equation*}
Hence, it follows that
\[
IX \leq C (M_k(T) + M_{k-1}(T) + M_0 (T)) (M_{k-1}(T) + M_0(T)) \log(2+T) + C_2 \epsilon .
\]
\par
{\em{\bf{Boundness of $M_k(T)$: }}}  Here we show that
\eqref{induction step} implies \eqref{induction hypothesis} with the
same uniform constant $C_1$. 
Combining the estimates that we have obtained for $I, \ldots, IX$, we see that
\[
M_k(T) \leq C (M_k(T) + M_{k-1} + M_0(T))(M_{k-1} + M_0(T)) \log(2 + T) + C_2 \epsilon.
\]
If we pick $C_1$ such that $C_1 > 2 C_2$, then applying \eqref{base case} and \eqref{induction hypothesis} yields the inequality
\[
M_k(T) \leq C (M_k(T) + \epsilon)\epsilon \log(2 + T) + \dfrac{C_1}{2} \epsilon .
\]
If $\epsilon, c$ in \eqref{data} and \eqref{lifespan} are sufficiently small, then \eqref{induction step} follows.
\par
{\em{\bf{Convergence of $\{ w_k \}$: }}}
We shall now show that uniform boundedness of each $M_k(T)$ implies that the sequence $\{ w_k \}$ is Cauchy. Standard results show that this implies that $\{ w_k \}$ converges to a solution to \eqref{zero data}, which implies Theorem \ref{thm1}. Setting
\begin{multline*}
A_k(T) = \sup_{t \in [0,T]}\sum_{|\mu| \leq D + 20} \norm{\partial^\mu(w_{k} - w_{k-1})(t,\cdot)}{2} \\
+ \sum_{|\mu| \leq 2} \log(2 + T)^{-1/2} \norm{\left< x \right>^{-1/2} \partial^\mu (w_k - w_{k-1})}{L^2_{t,x}(S_T)},
\end{multline*}
similar arguments to those used to bound $I,\ldots, IX$ along with \eqref{induction step} imply that
\[
A_k(T) \leq \dfrac{1}{2} A_{k-1}(T)
\]
for $T \leq T_\epsilon$, provided that $\epsilon, c$ are sufficiently small. This immediately yields that $\{ w_k \}$ is Cauchy in the space $X_T$ whose norm is given by
\[
\norm{v}{X_T} = \sup_{t \in [0,T]}\sum_{|\mu| \leq D + 20} \norm{\partial^\mu v(t,\cdot)}{2} .
\]
This completes the proof.

\end{document}